\providecommand{\U}[1]{\protect \rule{.1in}{.1in}}
\newtheorem{theorem}{Theorem}[section]
\newtheorem{definition}[theorem]{Definition}
\newtheorem{lemma}[theorem]{Lemma}
\theoremstyle{remark}
\newtheorem{remark}[theorem]{Remark}
\newtheorem{example}[theorem]{Example}
\numberwithin{equation}{section}
\begin{document}
\title[A universal law] {Monodromy of
generalized Lam\'{e} equations with
 Darboux-Treibich-Verdier potentials:\\ A universal law}
\author{Zhijie Chen}
\address{Department of Mathematical Sciences, Yau Mathematical Sciences Center,
Tsinghua University, Beijing, 100084, China }
\email{zjchen2016@tsinghua.edu.cn}
\author{Chang-Shou Lin}
\address{Department of Mathematics, Taiwan University, Taipei 10617, Taiwan }
\email{cslin@math.ntu.edu.tw}

\begin{abstract} The Darboux-Treibich-Verdier (DTV) potential $\sum_{k=0}^{3}n_{k}(n_{k}+1)\wp(z+\tfrac{
\omega_{k}}{2};\tau)$ is well-known as doubly-periodic solutions of the stationary KdV hierarchy (Treibich-Verdier, Duke Math. J. {\bf 68} (1992), 217-236). In this paper,
we study the
generalized Lam\'{e} equation with the DTV potential
\begin{equation*}
y^{\prime \prime }(z)=\bigg[  \sum_{k=0}^{3}n_{k}(n_{k}+1)\wp(z+\tfrac{
\omega_{k}}{2};\tau)+B\bigg]  y(z),\quad n_{k}\in \mathbb{N}
\end{equation*}
from the monodromy aspect.
We prove that the map from $(\tau, B)$ to the monodromy data $(r,s)$ satisfies a surprising universal law
$d\tau\wedge dB\equiv8\pi^2 dr\wedge ds.$  Our proof applies Panlev\'{e} VI equation and modular forms.
We also give applications to the algebraic multiplicity of (anti)periodic eigenvalues for the associated Hill operator.
\end{abstract}

\maketitle

\section{Introduction}

Throughout the paper, we use the notations $\omega_{0}=0$, $\omega_{1}=1$,
$\omega_{2}=\tau$, $\omega_{3}=1+\tau$ and $\Lambda_{\tau}=\mathbb{Z+Z}\tau$,
where $\tau \in \mathbb{H}=\{  \tau|\operatorname{Im}\tau>0\}  $.
Define $E_{\tau}:=\mathbb{C}/\Lambda_{\tau}$ to be the elliptic curve and $E_{\tau}[2]:=\{ \frac{\omega_{k}}{2}|k=0,1,2,3\}+\Lambda
_{\tau}$ to be the set consisting of the lattice points and $2$-torsion points
in $E_{\tau}$. 

Let $\wp(z)=\wp(z;\tau)$ be the
Weierstrass elliptic function with periods $\Lambda_{\tau}$ and define $e_k(\tau):=\wp(\frac{\omega_k}{2};\tau)$, $k=1,2,3$. It is well known that
\[\wp'(z;\tau)^2=4\prod_{k=1}^3(\wp(z;\tau)-e_k(\tau))=4\wp(z;\tau)^3-g_2(\tau)\wp(z;\tau)-g_3(\tau),\]
where $g_2(\tau), g_3(\tau)$ are invariants of the elliptic curve $E_{\tau}$.
Let $\zeta(z)=\zeta(z;\tau):=-\int^{z}\wp(\xi;\tau)d\xi$
be the Weierstrass zeta function with two quasi-periods $\eta_{k}(\tau)$:
\begin{equation}
\eta_{k}(\tau):=2\zeta(\tfrac{\omega_{k}}{2} ;\tau)=\zeta(z+\omega_{k} ;\tau)-\zeta(z;\tau),\quad k=1,2,
\label{40-2}
\end{equation}
and $\sigma(z)=\sigma(z;\tau):=\exp \int^{z}\zeta(\xi;\tau)d\xi$ be the Weierstrass sigma function. Notice that $\zeta(z)$ is an odd
meromorphic function with simple poles at $\Lambda_{\tau}$ and $\sigma(z)$
is an odd holomorphic function with simple zeros at $\Lambda_{\tau}$.

This is the final one in our project of studying the generalized Lam\'{e} equation (denoted by H$(\mathbf{n},B,\tau)$)
\begin{equation}  \label{eq21}
y^{\prime \prime }(z)=(I_{\mathbf{n}}(z;\tau)+B)y(z),\quad z\in\mathbb{C},
\end{equation}
with the \emph{Darboux-Treibich-Verdier potential} \cite{GW1,TV0,TV1,TV,Veselov}
\begin{equation}
I_{\mathbf{n}}(z;\tau):=\sum_{k=0}^3n_k(n_k+1)\wp(z+\tfrac{\omega_k}{2};\tau),
\end{equation}
where $\mathbf{n}=(n_0, n_1, n_2, n_3)$ with $n_k\in\mathbb{N}$ and $\max n_k\geq 1$, and $B\in\mathbb C$ is a parameter.  By changing variable $z\to z+\frac{\omega_k}{2}$ if necessary, we can always assume $n_0\geq 1$.

In the 19th century, Darboux introduced H$(\mathbf{n},B,\tau)$ as the elliptic form of the well-known Heun equation (i.e. a second order Fuchsian differential equation with four regular singular points). About 100 years later, H$(\mathbf{n},B,\tau)$ was introduced in the soliton theory by Treibich and Verdier \cite{TV0,TV1,TV}. In a
series of papers \cite{Takemura1,Takemura2,Takemura3,Takemura4,Takemura5} by Takemura,
H$(\mathbf{n},B,\tau)$ was also studied as the eigenvalue problem for the
Hamiltonian of the $BC_{1}$ (one particle) Inozemtsev model \cite{Inozemtsev}.
When $\mathbf{n}=(n,0,0,0)$, H$(\mathbf{n},B,\tau)$ becomes the classical Lam\'{e} equation
\begin{equation}  \label{Lame}
y^{\prime \prime }(z)=[n(n+1)\wp(z;\tau)+B]y(z),\quad z\in\mathbb{C}.
\end{equation}
See the classic texts \cite{Halphen,Poole,Whittaker-Watson} and recent works \cite{CLW,Dahmen,LW2,Maier} for introductions about (\ref{Lame}).

Since the works of Treibich and Verdier \cite{TV0,TV1,TV}, the DTV potential $I_{\mathbf{n}}(z;\tau)$ is famous
as an algebro-geometric finite-gap potential associated with the stationary
KdV hierarchy. In the literature, a potential $q(z)$ is called an \emph{algebro-geometric finite-gap
potential }if there is an odd-order differential operator
\begin{equation}\label{odd-op}
P_{2g+1}=\left( \frac{d}{dz} \right)^{2g+1}%
+\sum_{j=0}^{2g-1}b_{j}(z)\left( \frac{d}{dz} \right)^{2g-1-j}
\end{equation} such that $[P_{2g+1}, d^{2}/dz^{2}
-q(z)]=0$, or equivalently, $q(z)$ is a solution of stationary KdV hierarchy equations (cf.
\cite{GH-Book,GW}).

For the DTV potential $I_{\mathbf{n}}(z;\tau)$, we let $P_{2g+1}$ be the unique operator of the form (\ref{odd-op}) satisfying $[P_{2g+1}, d^{2}/dz^{2}
-I_{\mathbf{n}}(z;\tau)]=0$ such that its order $2g+1$ is \emph{smallest}. Then
a theorem of Burchnall and Chaundy \cite{Burchnall-Chaundy} implies the existence of the so-called {\it spectral polynomial} $Q_{\mathbf{n}}(B;\tau)$ of degree $2g+1$ in $B$ associated to $I_{\mathbf{n}}(z;\tau)$ such that
\[P_{2g+1}^2=Q_{\mathbf{n}}(\tfrac{d^{2}}{dz^{2}}
-I_{\mathbf{n}}(z;\tau);\tau).\]
The number
$g$, i.e. the arithmetic genus of the associate hyperelliptic curve $\{(B,C)|C^{2}=Q_{\mathbf{n}}(B;\tau)\}$, was
computed by Gesztesy and Weikard \cite{GW1} (see also \cite{Takemura5}): Let $m_{k}$ be the rearrangement of $n_{k}$ such
that $m_{0}\geq m_{1}\geq m_{2}\geq m_{3}$, then
\begin{equation}
g=%
\begin{cases}
m_{0} & \text{if $\sum m_{k}$ is even and $m_{0}+m_{3}\geq m_{1}+m_{2}$};\\
\frac{m_{0}+m_{1}+m_{2}-m_{3}}{2} & \text{if $\sum m_{k}$ is even and
$m_{0}+m_{3}<m_{1}+m_{2}$};\\
m_{0} & \text{if $\sum m_{k}$ is odd and $m_{0}>m_{1}+m_{2}+m_{3}$};\\
\frac{m_{0}+m_{1}+m_{2}+m_{3}+1}{2} & \text{if $\sum m_{k}$ is odd and
$m_{0}\leq m_{1}+m_{2}+m_{3}$}.
\end{cases}
\label{genus}%
\end{equation}

In this paper, we continue our study, initiated in \cite{CKL1}, on H$(\mathbf{n}, B, \tau)$ from the monodromy aspect. Since the local exponents of H$(\mathbf{n}, B, \tau)$ at $\frac{\omega_{k}}{2}$ are $-n_{k}$, $n_{k}+1$ and $I_{\mathbf{n}}(z; \tau)$ is even elliptic, it is easily seen (cf. \cite{GW1,Takemura1}) that any solution is meromorphic in $\mathbb{C}$, i.e. the local monodromy matrix
at $\frac{\omega_{k}}{2}$ is the identity matrix $I_{2}$. Thus the monodromy representation $\rho:\pi_{1}(E_{\tau})  \to
SL(2,\mathbb{C})$ is a group homeomorphism, which is abelian and hence reducible. More precisely, let $\ell_{j}$, $j=1,2$, be two
fundamental cycles $z\to z+\omega_j$ of $E_{\tau}$, and let $(y_1(z),y_2(z))$ be any basis of solutions of H$(\mathbf{n}, B, \tau)$. Then there are monodromy matrices $\rho(\ell_j)\in SL(2,\mathbb{C})$ such that
\[\begin{pmatrix}y_1(z+\omega_j)\\ y_2(z+\omega_j)\end{pmatrix}=\rho(\ell_j)\begin{pmatrix}y_1(z)\\ y_2(z)\end{pmatrix},\quad j=1,2,\]
\[\rho(\ell_1)\rho(\ell_2)=\rho(\ell_2)\rho(\ell_1),\]
and the monodromy group is generated by $\rho(\ell_1), \rho(\ell_2)$.
Consequently, there are two cases (see \cite{CKL1,GW1}):

\begin{itemize}
\item[(a)] If $Q_{\mathbf{n}}(B;\tau)\neq 0$, the monodromy is completely reducible, namely up to a common conjugation,
$\rho(\ell_{1})$ and $\rho(\ell_{2})$ can be diagonized simultaneously and  expressed as%
\begin{equation}
\rho(\ell_{1})=%
\begin{pmatrix}
e^{-2\pi is} & 0\\
0 & e^{2\pi is}%
\end{pmatrix}
,\text{ \  \  \ }\rho(\ell_{2})=%
\begin{pmatrix}
e^{2\pi ir} & 0\\
0 & e^{-2\pi ir}%
\end{pmatrix}
\label{Mono-1}%
\end{equation}
for some $(r,s)\in \mathbb{C}^{2}\backslash \frac{1}{2}\mathbb{Z}^{2}$. In
particular,
\begin{equation}
(\text{tr}\rho(\ell_{1}),\text{tr}\rho(\ell_{2}))=(2\cos2\pi s,2\cos2\pi
r)\not \in \{ \pm(2,2),\pm(2,-2)\}. \label{complete-rs}%
\end{equation}
Define an equivalent relation\[(r,s)\sim (r',s')\quad\text{ if}\quad (r,s)\equiv \pm (r',s')\quad\operatorname{mod} \mathbb{Z}^2. \]
Since $\text{tr}\rho(\ell_{j})$ is independent of the choice of solutions, so $(r,s)$ is unique in $(\mathbb{C}^{2}\backslash \frac{1}{2}\mathbb{Z}^{2})/\sim$, and we refer it as the {\it monodromy data} of H$(\mathbf{n},B,\tau)$.

\item[(b)] If $Q_{\mathbf{n}}(B;\tau)=0$, then the monodromy is not completely reducible (i.e. the space of common eigenfunctions
is of dimension $1$), and up to a common conjugation, $\rho(\ell_{1})$ and
$\rho(\ell_{2})$ can be expressed as%
\begin{equation}
\rho(\ell_{1})=\varepsilon_{1}%
\begin{pmatrix}
1 & 0\\
1 & 1
\end{pmatrix}
,\text{ \  \  \ }\rho(\ell_{2})=\varepsilon_{2}%
\begin{pmatrix}
1 & 0\\
\mathcal{C} & 1
\end{pmatrix}
, \label{Mono-21}%
\end{equation}
where $\varepsilon_{1},\varepsilon_{2}\in \{ \pm1\}$ and $\mathcal{C}%
\in \mathbb{C}\cup \{ \infty \}$. In particular,
\begin{equation}
(\text{tr}\rho(\ell_{1}),\text{tr}\rho(\ell_{2}))=(2\varepsilon_{1}%
,2\varepsilon_{2})\in \{ \pm(2,2),\pm(2,-2)\}. \label{notcompleteC}%
\end{equation}
Remark that if $\mathcal{C}=\infty$, then (\ref{Mono-21}) should be understood
as%
\begin{equation*}
\rho(\ell_{1})=\varepsilon_{1}%
\begin{pmatrix}
1 & 0\\
0 & 1
\end{pmatrix}
,\text{ \  \  \ }\rho(\ell_{2})=\varepsilon_{2}%
\begin{pmatrix}
1 & 0\\
1 & 1
\end{pmatrix}
.
\end{equation*}

\end{itemize}

In view of Case (a), a natural question that interests us is \emph{how to characterize the mondoromy data $(r,s)$ in terms of $(\tau, B)$}. Define
\begin{equation}
\Sigma_{\mathbf{n}}:=\{(\tau, B)\in \mathbb{H}\times \mathbb{C}\,|\, Q_{\mathbf{n}}(B;\tau)\neq 0\},
\end{equation}
which is clearly an open connected subset of $\mathbb{H}\times \mathbb{C}$.
Then the map
\begin{equation}
\varphi_{\mathbf{n}}: \Sigma_{\mathbf{n}}\to (\mathbb{C}^2\setminus\tfrac12\mathbb{Z}^2)/\sim,\quad
\varphi_{\mathbf{n}}(\tau, B):=(r,s)
\end{equation}
is well-defined. It was proved in \cite[Lemma 2.3]{CKL2} that
\begin{equation}\label{fc-glo-uni}
\varphi_{\mathbf{n}}(\tau, B_1)\neq \varphi_{\mathbf{n}}(\tau, B_2)\quad \text{if }\quad B_1\neq B_2.
\end{equation}

\begin{remark}
Given any $(\tau_0, B_0)\in \Sigma_{\mathbf n}$, take $(r_0,s_0)\in \mathbb{C}^{2}\backslash \frac{1}{2}\mathbb{Z}^{2}$ to be a representative of  the monodromy data of H$(\mathbf{n}, B_0, \tau_0)$. Since there is a small neighborhood $V\subset \mathbb{C}^{2}\backslash \frac{1}{2}\mathbb{Z}^{2}$ of $(r_0,s_0)$ such that $(r,s)\not\sim (r',s')$ for any $(r,s), (r',s')\in V$ satisfying $(r,s)\neq (r',s')$, there is a small neighborhood $U\subset \Sigma_{\mathbf n}$ of $(\tau_0, B_0)$ such that $\varphi_{\mathbf n}\big|_{U}: U\to (\mathbb{C}^2\setminus\tfrac12\mathbb{Z}^2)/\sim$ can be seen as 
\[\varphi_{\mathbf n}\big|_{U}: U\to V\subset \mathbb{C}^2\setminus\tfrac12\mathbb{Z}^2,\]
and so we can consider the local analytic properties of $\varphi_{\mathbf n}$. Our main result of this paper is the following surprising univeral law.
\end{remark}

\begin{theorem}\label{thm-rsbtau} Given $\mathbf{n}$. Then the map $\varphi_{\mathbf{n}}$ is holomorphic and locally one-to-one, and satisfies
\begin{equation}\label{brstau}d\tau\wedge dB\equiv 8\pi^2 dr\wedge ds,\quad\forall (\tau, B)\in \Sigma_{\mathbf n}.\end{equation}
\end{theorem}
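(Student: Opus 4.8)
The plan is to prove \eqref{brstau} pointwise by showing that the Jacobian of $\varphi_{\mathbf n}=(r,s)$ with respect to $(\tau,B)$ is the constant $\tfrac{1}{8\pi^2}$. Holomorphy of $\varphi_{\mathbf n}$ on $\Sigma_{\mathbf n}$ is the standard analytic dependence of the solutions of \eqref{eq21} on the parameters $(\tau,B)$, and once the Jacobian is shown to be this nonzero constant, local injectivity follows from the holomorphic inverse function theorem. Since throughout $\Sigma_{\mathbf n}$ we are in Case (a), so that two distinct Floquet solutions exist, everything reduces to the single identity
\begin{equation*}
\partial_\tau r\,\partial_B s-\partial_B r\,\partial_\tau s=\frac{1}{8\pi^2},\qquad\text{because}\qquad dr\wedge ds=(\partial_\tau r\,\partial_B s-\partial_B r\,\partial_\tau s)\,d\tau\wedge dB.
\end{equation*}
To represent $(r,s)$ analytically, fix $(\tau,B)\in\Sigma_{\mathbf n}$ and let $y_+,y_-$ be the Floquet solutions normalized by the Wronskian $W(y_+,y_-)=y_+y_-'-y_+'y_-\equiv1$, so that $y_\pm(z+\omega_1)=e^{\mp2\pi is}y_\pm$ and $y_\pm(z+\omega_2)=e^{\pm2\pi ir}y_\pm$ by \eqref{Mono-1}. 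Then $\phi:=y_+'/y_+$ is elliptic on $E_\tau$, and integrating $\phi$ along the two cycles gives $-2\pi i\,s=\int_{z_0}^{z_0+\omega_1}\phi\,dz$ and $2\pi i\,r=\int_{z_0}^{z_0+\omega_2}\phi\,dz$ (mod $2\pi i$).

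First I would compute the $B$-derivatives, which is clean because the lattice $\Lambda_\tau$ is fixed. Differentiating the Riccati equation $\phi'+\phi^2=I_{\mathbf n}(z;\tau)+B$ in $B$ gives $(\partial_B\phi)'+2\phi\,\partial_B\phi=1$; multiplying by the integrating factor $y_+^2=e^{2\int\phi}$ yields $(y_+^2\,\partial_B\phi)'=y_+^2$. Using $1/y_+^2=(y_-/y_+)'$ (a consequence of $W\equiv1$) and integrating by parts along the cycles, the boundary contributions cancel because of the matched quasi-periodicities $y_+^2(z+\omega_2)=e^{4\pi ir}y_+^2$ and $(y_-/y_+)(z+\omega_2)=e^{-4\pi ir}(y_-/y_+)$, and a short computation leaves
\begin{equation*}
\partial_B r=-\frac{1}{2\pi i}\oint_{\ell_2}Y\,dz,\qquad \partial_B s=\frac{1}{2\pi i}\oint_{\ell_1}Y\,dz,\qquad Y:=y_+y_-,
\end{equation*}
where $Y$ is the elliptic function characterized by $(Y')^2-2YY''+4(I_{\mathbf n}(z;\tau)+B)Y^2=1$. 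Thus the $B$-derivatives of the monodromy are exactly the two periods of the elliptic differential $Y\,dz$.

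The $\tau$-derivatives are the heart of the matter, and the main obstacle, because varying $\tau$ moves both the lattice $\Lambda_\tau$ and the cycle $\ell_2:z\mapsto z+\tau$. Differentiating the Riccati equation in $\tau$ gives $(y_+^2\,\partial_\tau\phi)'=y_+^2\,\partial_\tau I_{\mathbf n}$, and the same integration-by-parts scheme will express $\partial_\tau r,\partial_\tau s$ through period integrals of $Y\,\partial_\tau I_{\mathbf n}\,dz$; but two new features appear. First, $\partial_\tau\phi$ and $\partial_\tau I_{\mathbf n}$ are no longer elliptic: differentiating $\phi(z+\omega_2;\tau)=\phi(z;\tau)$ in $\tau$ produces the anomaly $\partial_\tau\phi(z+\omega_2)-\partial_\tau\phi(z)=-\phi'(z)$, and likewise for $I_{\mathbf n}$. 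Second, the moving endpoint of $\ell_2$ contributes an extra term $\phi(z_0)$ to $2\pi i\,\partial_\tau r$. Here the modular input enters: I would invoke the classical heat-type identity expressing $\partial_\tau\wp(z;\tau)$, hence $\partial_\tau I_{\mathbf n}$, as a $z$-derivative of an explicit combination of $\wp,\zeta$ and $\eta_1$, so that the non-elliptic anomalies are controlled and organized through the quasi-periods $\eta_1,\eta_2$ and the Legendre relation $\eta_1\omega_2-\eta_2\omega_1=2\pi i$.

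Finally I would assemble the Jacobian $\partial_\tau r\,\partial_B s-\partial_B r\,\partial_\tau s$. By the previous two steps it becomes an antisymmetric bilinear pairing of periods over $\ell_1$ and $\ell_2$ of the differential $Y\,dz$ against the $\tau$-variation, which is precisely the setting of Riemann's bilinear relation on the torus $E_\tau$; the pairing therefore collapses to a sum of residues together with a contribution of the quasi-periods. Carrying out that residue computation and substituting the Legendre relation is what produces the universal constant, and the appearance of $8\pi^2=-2(2\pi i)^2$ is exactly the square of the Legendre period $2\pi i$, which explains why the law is independent of $\mathbf n$. I expect the delicate points to be the exact bookkeeping of signs and of the boundary and anomaly terms in the $\tau$-derivative, and the verification that all $\mathbf n$- and $B$-dependent period contributions cancel so that only $\tfrac{1}{8\pi^2}$ survives; the Painlev\'e VI structure underlying H$(\mathbf n,B,\tau)$ provides the conceptual reason for this cancellation, namely that the monodromy map is symplectic.
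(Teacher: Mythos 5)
Your strategy---compute the Jacobian of $(\tau,B)\mapsto(r,s)$ directly from deformation formulas for the Floquet exponents and close it with a bilinear-relation/residue argument---is genuinely different from the paper's proof, and the part you actually execute is sound: with $W(y_+,y_-)\equiv 1$ the boundary terms in your integration by parts do cancel, so $\partial_B r=-\frac{1}{2\pi i}\oint_{\ell_2}Y\,dz$ and $\partial_B s=\frac{1}{2\pi i}\oint_{\ell_1}Y\,dz$ with $Y=y_+y_-$ are correct, as are your identification of the two anomalies in the $\tau$-direction (non-ellipticity of $\partial_\tau\phi$ and $\partial_\tau I_{\mathbf n}$, and the moving endpoint of $\ell_2$) and the fact that $\partial_\tau\wp$ is a total $z$-derivative of an explicit quasi-periodic expression.

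The genuine gap is that the proposal stops exactly where the theorem begins. The $\tau$-derivative formulas are never written down, and the final assembly---``the pairing therefore collapses to a sum of residues together with a contribution of the quasi-periods''---is asserted, not proved. Since $Y\,\partial_\tau I_{\mathbf n}\,dz$ is \emph{not} an elliptic differential, Riemann's bilinear relation does not apply as stated; one must simultaneously track the anomaly $\partial_\tau I_{\mathbf n}(z+\omega_2)-\partial_\tau I_{\mathbf n}(z)=-I_{\mathbf n}'(z)$, the endpoint term $\phi(z_0)$, and the local contributions at the points $\frac{\omega_k}{2}$, where $Y$ has poles of order $2n_k$ whose expansions depend on $\mathbf n$ and $B$. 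Verifying that \emph{all} of these $\mathbf n$- and $B$-dependent contributions cancel and leave exactly $\tfrac{1}{8\pi^2}$ is precisely the content of \eqref{brstau}; deferring it as ``delicate bookkeeping,'' and appealing to ``the monodromy map is symplectic,'' is circular, because symplecticity with this normalization is what is being proved. By contrast, the paper never performs this computation: it verifies \eqref{brstau} by hand only for $\mathbf n=(1,0,0,0)$ and $(2,0,0,0)$, using the explicit pre-modular forms $Z_{r,s}$ and $Z^3-3\wp Z-\wp'$ together with the characterization of $(r,s)$ as zeros of $Z^{\mathbf n}_{r,s}(\tau)$, and then propagates the law to all $\mathbf n$ by induction. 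The pivot is Lemma \ref{lem-6.1}: the $\tau$-independent Wronskian $W_{\mathbf n}(r,s)$ of $\partial_r p^{\mathbf n}_{r,s}$ and $\partial_s p^{\mathbf n}_{r,s}$ (solutions of the linearized elliptic Painlev\'e VI equation) equals $-\frac{1}{8\pi^2}(\tau_rB_s-\tau_sB_r)$ computed at the zeros of either $Z^{\mathbf n_0^-}_{r,s}$ or $Z^{\mathbf n_0^+}_{r,s}$, so the law for $\mathbf n_0^-$, the identity $W_{\mathbf n}\equiv-1$, and the law for $\mathbf n_0^+$ are all equivalent; Lemma \ref{lem-3.4} (half-period translation, $(r,s)\mapsto(r_k,s_k)$) then transfers the identity across the indices $n_1,n_2,n_3$. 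If you wish to complete your route, the missing work is the full anomaly/residue bookkeeping in the $\tau$-direction; until that is done, the universal constant has not been established.
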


\begin{remark} This universal law \eqref{brstau} is quite mysterious to us. Is there any geometric explanation of this universal law? This question is worthy to be explored. 

On the other hand, as in \cite{GH-Book,GUW1}, for simplicity we call an elliptic function $q(z)$  \emph{an elliptic KdV potential} if $q(z)$ is a solution of the stationary KdV hierarchy.
The DTV potential $I_{\mathbf{n}}(z;\tau)$ is the simplest elliptic KdV potential. A natural question is \emph{whether any analogue of Theorem \ref{thm-rsbtau} holds for other elliptic KdV potentials}. More precisely, it was proved by Gesztesy, Unterkofler and Weikard \cite[Theorem 1.1]{GUW1} that $q(z)$ is an elliptic KdV potential if and only if up to adding a constant, $q(z)$ is expressed as
\begin{equation}
q(z)=q(z;\tau)=\sum_{j=1}^{n}m_{j}(m_{j}+1)\wp(z-p_{j}(\tau);\tau), \label{i2}%
\end{equation}
where $m_{j}\in\mathbb{N}$, $p_j(\tau)\in E_{\tau}$ satisfies $p_i(\tau)\neq p_j(\tau)$ for $i\neq j$ and the following conditions
\begin{equation}
\sum_{j=1,\neq i}^{n}m_{j}(m_{j}+1)\wp^{(2k-1)}(p_{i}(\tau)-p_{j}(\tau);\tau)=0\; \text{for
$1\leq k\leq m_{i}$, $1\leq i\leq n$}. \label{i3}%
\end{equation}
By \eqref{i3}, it is reasonable that $p_j(\tau)$'s are holomophic in $\tau$ and $p_i(\tau)\neq p_j(\tau)$ for $i\neq j$ for $\tau$ belonging to some open subset $O\subset \mathbb{H}$. Consider the corresponding differential equation
\begin{equation}y''(z)=\Big[\sum_{j=1}^{n}m_{j}(m_{j}+1)\wp(z-p_{j}(\tau);\tau)+B\Big]y(z),\quad \tau\in O,\end{equation}
and denote its spectral polynomial by $Q_{\mathbf p}(B;\tau)$. Then like the DTV case, the map 
\[\{(\tau, B)\in O\times \mathbb{C}\,|\, Q_{\mathbf{p}}(B;\tau)\neq 0\}\ni (\tau, B)\mapsto (r,s) \]
is well-defined. Is there any analogue of the universal law \eqref{brstau} holding for this map? 
One can see that our approach does not work for the general elliptic KdV potentials, and this question remains open. 
\end{remark}

Our proof of this universal law is based on Painlev\'e VI equation and the so-called \emph{pre-modular form} $Z^{\mathbf{n}}_{r,s}(\tau)$ constructed in \cite{CKL2}
which characterizes the monodromy data $(r,s)$ in a precise way.

\begin{definition}
A function $f_{r,s}(\tau)$ on $\mathbb{H}$, which depends meromorphically on two parameters $(r,s) (\operatorname{mod} \mathbb{Z}^2)\in\mathbb{C}^2$, is called a pre-modular form if the following hold:
\begin{itemize}
\item[(1)] If $(r,s)\in\mathbb{C}^2\setminus\frac{1}{2}\mathbb{Z}^2$, then $f_{r,s}(\tau)\not\equiv 0,\infty$ and is meromorphic in $\tau$. Furthermore, it is holomorphic in $\tau$ if $(r,s)\in\mathbb{R}^2\setminus\frac{1}{2}\mathbb{Z}^2$.
\item[(2)] There is $k\in\mathbb{N}$ independent of $(r,s)$ such that if $(r,s)$ is any $m$-torsion point for some $m\geq 3$, then $f_{r,s}(\tau)$ is a modular form of weight $k$ with respect to $\Gamma(m)$.
\end{itemize}
\end{definition}
The main result of \cite{CKL2} is following

\begin{theorem} \cite{CKL2}
\label{thm-premodular}  There exists a pre-modular form $Z_{r,s}^{\mathbf{n}%
}(\tau)$ defined in $\tau\in\mathbb{H}$ for any pair $(r,s)\in\mathbb{C}%
^2\setminus \frac{1}{2}\mathbb{Z}^2$ such that the following hold.

\begin{enumerate}
\item[(a)] If $(r,s)=(\frac{k_1}{m},\frac{k_2}{m})$ with $m\in 2\mathbb{N}
_{\geq 2}$, $k_1,k_2\in\mathbb{Z}_{\geq 0}$ and $\gcd(k_1,k_2,m)=1$, then $
Z_{r,s}^{\mathbf{n}}(\tau)$ is a modular form of weight $
\sum_{k=0}^3n_k(n_k+1)/2$ with respect to  $$\Gamma(m):=\{\gamma
\in SL(2,\mathbb{Z})|\gamma\equiv I_2\mod m \}.$$

\item[(b)] Given $(r,s)\in\mathbb{C}^2\setminus\frac{1}{2}\mathbb{Z}^2$ and $\tau_0\in\mathbb{H}$ such that $r+s\tau_0\notin \Lambda_{\tau_0}$,  there is $B\in\mathbb{C}$ such that $(r,s)$ is the monodromy data of H$(\mathbf{n}, B, \tau_0)$, i.e. the monodromy matrices of H$(\mathbf{n}, B, \tau_0)$ are given by \eqref{Mono-1}
\[
\rho(\ell_{1})=%
\begin{pmatrix}
e^{-2\pi is} & 0\\
0 & e^{2\pi is}%
\end{pmatrix}
,\text{ \  \  \ }\rho(\ell_{2})=%
\begin{pmatrix}
e^{2\pi ir} & 0\\
0 & e^{-2\pi ir}%
\end{pmatrix}
\]
if and only if $Z_{r,s}^{\mathbf{n}}(\tau_0)=0$.
\end{enumerate}
\end{theorem}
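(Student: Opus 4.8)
The plan is to build $Z^{\mathbf n}_{r,s}$ by hand from the Floquet (common eigen-) solutions of H$(\mathbf n,B,\tau)$ and to read $(r,s)$ off their quasi-periodicity. In Case~(a) a common eigenfunction is meromorphic of the second kind whose only poles sit at the $\frac{\omega_k}{2}$, of order $n_k$; since a second-kind function has equally many zeros and poles in a period parallelogram, it must have exactly $N:=\sum_k n_k$ finite zeros $a_1,\dots,a_N$, whence every such solution has the Hermite--Krichever form
\begin{equation*}
y(z)=e^{cz}\,\frac{\prod_{i=1}^{N}\sigma(z-a_i;\tau)}{\prod_{k=0}^{3}\sigma(z+\tfrac{\omega_k}{2};\tau)^{n_k}}.
\end{equation*}
Using $\sigma(z+\omega_j)=-e^{\eta_j(z+\omega_j/2)}\sigma(z)$ together with \eqref{40-2}, a direct computation of the multipliers $y(z+\omega_j)/y(z)$ shows, via the Legendre relation $\eta_1\tau-\eta_2=2\pi i$, that $y$ realizes the monodromy \eqref{Mono-1} for the prescribed $(r,s)$ precisely when
\begin{equation*}
\widehat a:=\sum_{i=1}^{N}a_i+\sum_{k=0}^{3}n_k\tfrac{\omega_k}{2}\equiv r+s\tau\ (\mathrm{mod}\ \Lambda_\tau),\qquad c=r\eta_1(\tau)+s\eta_2(\tau),
\end{equation*}
exactly as in the classical Lam\'e case $\mathbf n=(1,0,0,0)$.

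Next I would convert ``$y$ solves H$(\mathbf n,B,\tau)$'' into an algebraic system. With $u:=y'/y=c+\sum_i\zeta(z-a_i)-\sum_k n_k\zeta(z+\tfrac{\omega_k}{2})$ the equation is the Riccati identity $u'+u^2=I_{\mathbf n}(z;\tau)+B$ between elliptic functions. Matching residues at the simple poles $a_i$ gives the Hermite--Krichever equations
\begin{equation*}
c+\sum_{j\neq i}\zeta(a_i-a_j)-\sum_{k=0}^{3}n_k\,\zeta(a_i+\tfrac{\omega_k}{2})=0,\qquad i=1,\dots,N,
\end{equation*}
and matching the higher Laurent coefficients at each $\frac{\omega_k}{2}$ yields the remaining equations, which fix $B$ and, together with the above, cut out the spectral curve $C^2=Q_{\mathbf n}(B;\tau)$. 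The heart of the construction is then elimination: using the elliptic addition theorem to express the symmetric functions of $a_1,\dots,a_N$ through $\widehat a$, I would remove $a_1,\dots,a_N$ and $B$ and reduce the system to one scalar equation in $\widehat a$, $c$, $\tau$. Substituting $\widehat a=r+s\tau$ and $c=r\eta_1+s\eta_2$ turns it into a polynomial in the three building blocks
\begin{equation*}
\mathcal Z_{r,s}(\tau):=\zeta(r+s\tau;\tau)-r\eta_1(\tau)-s\eta_2(\tau),\quad \wp(r+s\tau;\tau),\quad \wp'(r+s\tau;\tau),
\end{equation*}
whose coefficients are polynomials in $g_2(\tau),g_3(\tau),e_k(\tau)$; this polynomial is the definition of $Z^{\mathbf n}_{r,s}(\tau)$.

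For the pre-modular properties I would use the transformation laws of the three blocks. If $(r,s)$ is an $m$-torsion point then $r+s\tau$ is an $m$-torsion point of $E_\tau$, and $\Gamma(m)$ fixes $(r,s)\bmod\mathbb Z^2$; writing $j(\gamma,\tau)$ for the automorphy factor, one has $\mathcal Z_{r,s}\mapsto j\,\mathcal Z_{r,s}$, $\wp(r+s\tau)\mapsto j^2\,\wp(r+s\tau)$ and $\wp'(r+s\tau)\mapsto j^3\,\wp'(r+s\tau)$, while $g_2,g_3,e_k$ have weights $4,6,2$. Assertion~(a) then follows once one checks that every monomial of the elimination polynomial has total weight $\sum_k n_k(n_k+1)/2$; this homogeneity is forced by the scaling $z\mapsto\lambda z$, $\wp\mapsto\lambda^{-2}\wp$, $\zeta\mapsto\lambda^{-1}\zeta$ under which the Riccati system is weighted. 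For Property~(1) of the definition, holomorphy of $Z^{\mathbf n}_{r,s}$ in $\tau$ for $(r,s)\in\mathbb R^2\setminus\frac12\mathbb Z^2$ is immediate, since then $r+s\tau\notin\Lambda_\tau$ for every $\tau\in\mathbb H$ and all three blocks stay finite; and $Z^{\mathbf n}_{r,s}\not\equiv0,\infty$ can be seen by specializing to the classical Lam\'e diagonal, or by exhibiting a single $(\tau,B)$ whose monodromy is the given $(r,s)$.

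Finally, assertion~(b) is the equivalence between solvability of the eliminated system and the vanishing of $Z^{\mathbf n}_{r,s}(\tau_0)$. If $Z^{\mathbf n}_{r,s}(\tau_0)=0$ then the full Hermite--Krichever system has a solution $(\{a_i\},c,B)$ with $\widehat a=r+s\tau_0$ and $c=r\eta_1+s\eta_2$, and the resulting $y$ is a genuine common eigenfunction realizing \eqref{Mono-1}, so this $B$ works; here $r+s\tau_0\notin\Lambda_{\tau_0}$ guarantees $y$ is nondegenerate, and \eqref{fc-glo-uni} pins $B$ down uniquely. Conversely, if some $B$ realizes $(r,s)$, its eigenfunction has the above form with $\widehat a=r+s\tau_0$, $c=r\eta_1+s\eta_2$, so the eliminated equation holds, i.e. $Z^{\mathbf n}_{r,s}(\tau_0)=0$. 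The main obstacle throughout is the elimination step together with its weight bookkeeping: for $n_k\geq2$ the higher-order poles at $\frac{\omega_k}{2}$ make the system genuinely nonlinear in the $a_i$, and one must show the eliminant is independent of the ordering and branch choices, is free of spurious $\tau$-poles, and carries total weight exactly $\sum_k n_k(n_k+1)/2$ (rather than a larger weight that then factors). Establishing this last point, and with it $Z^{\mathbf n}_{r,s}\not\equiv0$, is where I expect the real work to lie.
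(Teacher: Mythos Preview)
Your outline is essentially the same path the paper (following \cite{CKL2}) takes: Hermite--Krichever ansatz, read off $(r,s)$ from the multipliers via the constraints $\sum a_i-\sum n_k\omega_k/2=r+s\tau$ and $\sum\zeta(a_i)-\sum n_k\eta_k/2=r\eta_1+s\eta_2$, then eliminate. The substantive difference is in how the elimination is organized. You propose a direct elimination of $a_1,\dots,a_N,B$ from the Riccati/residue system, and you correctly flag that controlling the degree, the weight homogeneity, and the absence of spurious factors is the hard part. The paper sidesteps exactly these difficulties by recasting everything algebro-geometrically: the set $Y_{\mathbf n}(\tau)$ of admissible $\boldsymbol a$ is a hyperelliptic curve with affine model $C^2=Q_{\mathbf n}(B;\tau)$; the addition map $\sigma_{\mathbf n}(\boldsymbol a)=\sum a_i-\sum n_k\omega_k/2$ is a \emph{finite morphism} $\overline{Y_{\mathbf n}(\tau)}\to E_\tau$ whose degree is computed to be $\tfrac12\sum n_k(n_k+1)$; and the function $\mathbf z_{\mathbf n}(\boldsymbol a)=\zeta(\sigma_{\mathbf n}(\boldsymbol a))-\sum\zeta(a_i)+\sum n_k\eta_k/2$ is shown to be a \emph{primitive generator} of $K(\overline{Y_{\mathbf n}(\tau)})$ over $K(E_\tau)$. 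Then $W_{\mathbf n}(X;\sigma,\tau)$ is simply its minimal polynomial over $K(E_\tau)$, and $Z^{\mathbf n}_{r,s}(\tau):=W_{\mathbf n}(Z_{r,s}(\tau);r+s\tau,\tau)$.

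This buys you, for free, precisely the three things you worried about: the $X$-degree is $[K(\overline{Y_{\mathbf n}}):K(E_\tau)]=\tfrac12\sum n_k(n_k+1)$; irreducibility of the minimal polynomial rules out spurious factors and gives $Z^{\mathbf n}_{r,s}\not\equiv 0$; and the weight homogeneity is read off from the coefficient ring $\mathbb Q[e_1,e_2,e_3,\wp(\sigma),\wp'(\sigma)]$. Your direct-elimination route would in principle produce the same polynomial, but proving it has the correct degree and no extraneous components without the curve/field-extension picture looks genuinely hard---the paper's framework is what makes the construction go through cleanly. One small slip: your $\widehat a$ has the wrong sign on $\sum n_k\omega_k/2$ (compare the paper's $\sigma_{\mathbf n}$), though modulo $\Lambda_\tau$ this is harmless.
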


Theorem \ref{thm-premodular} for the Lam\'{e} case $n_1=n_2=n_3=0$ was first proved in \cite{LW2}.
We emphasize that Theorem \ref{thm-premodular} has important applications to nonlinear PDEs; see \cite{CKL2,LW2} for details.

In this paper, we prove that the assumption $r+s\tau_0\notin \Lambda_{\tau_0}$ in Theorem Theorem \ref{thm-premodular}-(b) can be deleted, i.e.

\begin{theorem}\label{thm-premodular-1}
Given $(r,s)\in\mathbb{C}^2\setminus\frac{1}{2}\mathbb{Z}^2$ and $\tau_0\in\mathbb{H}$,  there is $B\in\mathbb{C}$ such that $(r,s)$ is the monodromy data of H$(\mathbf{n}, B, \tau_0)$ if and only
if $Z_{r,s}^{\mathbf{n}}(\tau_0)=0$. 
\end{theorem}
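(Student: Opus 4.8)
My plan is to deduce the statement from Theorem~\ref{thm-premodular}-(b) by analysing the ``degenerate locus'' $\{r+s\tau\in\Lambda_\tau\}$ on which that theorem's extra hypothesis fails. First observe that this locus is harmless for real parameters: if $(r,s)\in\mathbb{R}^2\setminus\tfrac12\mathbb{Z}^2$ and $r+s\tau_0=m_1+m_2\tau_0$ with $m_j\in\mathbb{Z}$, then comparing imaginary parts (using $\operatorname{Im}\tau_0>0$) forces $s=m_2$ and then $r=m_1$, i.e. $(r,s)\in\mathbb{Z}^2$, which is excluded. Hence the hypothesis $r+s\tau_0\notin\Lambda_{\tau_0}$ can only fail when $(r,s)$ is non-real, and for fixed such $(r,s)$ the set $\{\tau\in\mathbb{H}:r+s\tau\in\Lambda_\tau\}$ is discrete. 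Throughout I use that $(r,s)\notin\tfrac12\mathbb{Z}^2$ forces Case~(a), so that any $B_0$ realizing $(r,s)$ satisfies $(\tau_0,B_0)\in\Sigma_{\mathbf{n}}$, and that the traces $\operatorname{tr}\rho(\ell_j)$, and locally a branch of $(r,s)$, depend holomorphically on $(\tau,B)$.

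For the forward implication (realizability $\Rightarrow Z^{\mathbf{n}}_{r,s}(\tau_0)=0$) I would argue by the identity theorem and avoid the hypothesis entirely. Suppose $\varphi_{\mathbf{n}}(\tau_0,B_0)=(r,s)$ with $(\tau_0,B_0)\in\Sigma_{\mathbf{n}}$, and on a small polydisc $U\subset\Sigma_{\mathbf{n}}$ about $(\tau_0,B_0)$ set $G(\tau,B):=Z^{\mathbf{n}}_{\varphi_{\mathbf{n}}(\tau,B)}(\tau)$. Since $Z^{\mathbf{n}}_{r,s}(\tau)$ is jointly meromorphic and $\varphi_{\mathbf{n}}$ is holomorphic, $G$ is meromorphic on $U$. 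At every point of $U$ lying off the thin analytic set $\{r+s\tau\in\Lambda_\tau\}$, the pair $\varphi_{\mathbf{n}}(\tau,B)$ is by construction realized at $(\tau,B)$ with $r+s\tau\notin\Lambda_\tau$, so Theorem~\ref{thm-premodular}-(b) gives $G(\tau,B)=0$; these points are open and dense in $U$. The identity theorem then forces $G\equiv0$ on $U$, so $G$ is in fact holomorphic at $(\tau_0,B_0)$ and $Z^{\mathbf{n}}_{r,s}(\tau_0)=G(\tau_0,B_0)=0$.

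For the converse (vanishing $\Rightarrow$ realizability), the only new case is $Z^{\mathbf{n}}_{r,s}(\tau_0)=0$ with $r+s\tau_0\in\Lambda_{\tau_0}$. Here I would pass to the three parameters $(\tau,r,s)$ and set $\mathcal{B}:=\{r+s\tau\in\Lambda_\tau\}$, $\mathcal{Z}:=\{Z^{\mathbf{n}}_{r,s}(\tau)=0\}$, with $\mathcal{G}$ the graph of $\varphi_{\mathbf{n}}$; Theorem~\ref{thm-premodular}-(b) states $\mathcal{Z}\setminus\mathcal{B}=\mathcal{G}\setminus\mathcal{B}$. If $\mathcal{Z}$ has no component contained in $\mathcal{B}$ near $P_0:=(\tau_0,r,s)$, then $P_0$ is a limit of points $P_n=(\tau_n,\varphi_{\mathbf{n}}(\tau_n,B_n))\in\mathcal{Z}\setminus\mathcal{B}$. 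The map $B\mapsto\varphi_{\mathbf{n}}(\tau_0,B)$ is proper, since the Floquet multipliers grow like $e^{\pm\sqrt{B}\,\omega_j}$ as $B\to\infty$; hence $B_n$ stays bounded and, along a subsequence, $B_n\to B_\ast$. A finite limit $B_\ast$ cannot be a branch point $Q_{\mathbf{n}}(B_\ast;\tau_0)=0$: there the monodromy is of Case~(b) with both traces in $\{\pm2\}$, whereas continuity of the traces would force $(r,s)=\lim(r_n,s_n)\in\tfrac12\mathbb{Z}^2$, a contradiction. Thus $(\tau_0,B_\ast)\in\Sigma_{\mathbf{n}}$ and, by continuity of the monodromy, $\varphi_{\mathbf{n}}(\tau_0,B_\ast)=(r,s)$; that is, $(r,s)$ is realized.

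The hard part is the hypothesis used above: that $\mathcal{Z}$ has no component inside $\mathcal{B}$, equivalently that $Z^{\mathbf{n}}_{r,s}(\tau)$ does not vanish identically along a slice $\{r+s\tau\equiv c\in\Lambda\}$. I expect to establish this, and in fact considerably more, from the Hermite--Krichever construction of $Z^{\mathbf{n}}$ in \cite{CKL2}: as $r+s\tau$ approaches a lattice point the normalization of the ansatz degenerates and $Z^{\mathbf{n}}_{r,s}(\tau)$ develops a pole, as in the simplest (Lam\'e) case $\mathbf{n}=(1,0,0,0)$, where $Z^{\mathbf{n}}_{r,s}(\tau)=\zeta(r+s\tau;\tau)-r\eta_1(\tau)-s\eta_2(\tau)$ manifestly blows up on $\mathcal{B}$ through its summand $\zeta(r+s\tau;\tau)$. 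Verifying this pole behaviour for all $\mathbf{n}$ is the technical heart of the matter. Once it is in hand the converse goes through; moreover the degenerate case then collapses, since on $\mathcal{B}$ one has $Z^{\mathbf{n}}_{r,s}(\tau_0)=\infty\neq0$, so by the forward implication no realizing $B$ exists there and both sides of the asserted equivalence are vacuously false.
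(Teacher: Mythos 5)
Your forward implication is essentially the paper's own Step~1 in different packaging: the paper likewise applies Theorem~\ref{thm-premodular}-(b) at nearby non-degenerate parameters (using that $\sigma_{\mathbf{n}}^{-1}(0)$ is finite, so only finitely many $B$ near $B_0$ give $r(B)+s(B)\tau_0\in\Lambda_{\tau_0}$) and passes to the limit; your identity-theorem variant is fine. The limiting argument in your converse is also the same in spirit as the paper's Step~2, though your properness claim needs more care than ``Floquet multipliers grow like $e^{\pm\sqrt{B}\,\omega_j}$'': the monodromy data is only defined modulo $\mathbb{Z}^2$ and sign, so boundedness of $(r_n,s_n)$ must be converted into boundedness of $B_n$ via quantities invariant under that ambiguity (the paper instead quotes from \cite{CKL1} that $B\to\infty$ if and only if $c(\boldsymbol{a})=r\eta_1+s\eta_2\to\infty$).

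The genuine gap is the step you yourself defer as ``the technical heart'', and the route you propose for it cannot work. You want that no component of $\mathcal{Z}$ lies inside $\mathcal{B}$, and you plan to deduce it (plus the stronger conclusion that the degenerate case is vacuous) from the claim that $Z^{\mathbf{n}}_{r,s}(\tau)$ always develops a pole as $r+s\tau$ approaches $\Lambda_\tau$. That claim holds in the Lam\'e cases you checked, but it is false for general $\mathbf{n}$. Take $\mathbf{n}=(2,1,0,0)$ and the paper's explicit formula $Z_{r,s}^{(2,1,0,0)}=Z^4+3(e_1-2\wp)Z^2-4\wp'Z-3(\wp^2+e_1\wp+e_1^2-\tfrac{g_2}{4})$. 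Normalizing $r+s\tau_0=0$ and inserting $Z=\alpha^{-1}+2\pi is-\eta_1\alpha+O(\alpha^3)$, $\wp=\alpha^{-2}+O(\alpha^2)$, $\wp'=-2\alpha^{-3}+O(\alpha)$ with $\alpha=r+s\tau_0\to0$ (exactly as in the proof of Lemma~\ref{lem-s-finite}), the coefficients of $\alpha^{-4},\alpha^{-3},\alpha^{-2}$ cancel identically and one is left with
\begin{equation*}
Z_{r,s}^{(2,1,0,0)}=\frac{2c\left(2c^2+3e_1(\tau_0)\right)}{\alpha}+O(1),\qquad c:=2\pi i s,
\end{equation*}
so on the curve $8\pi^2s^2=3e_1(\tau_0)$ inside the degenerate locus the pole disappears and $Z^{(2,1,0,0)}$ takes a finite value there. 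Hence ``$Z^{\mathbf{n}}=\infty$ on $\mathcal{B}$'' is not available, and your concluding picture that on $\mathcal{B}$ both sides of the equivalence are vacuously false is unwarranted as well; indeed the theorem is formulated as an equivalence precisely because finite, possibly vanishing, values on the degenerate locus cannot be ruled out, and when such a zero occurs the theorem asserts realizability, not vacuity. What is true, and is exactly what the paper proves (Lemma~\ref{lem-s-finite}), is much weaker but sufficient: writing $Z^{\mathbf{n}}_{r_0,s}(\tau_0)=\sum_{j\geq-N}d_j(s)\alpha^j$ with $N=\tfrac12\sum_k n_k(n_k+1)$, any zero on the degenerate locus forces $d_0(s_0)=0$, where $d_0(s)=(2\pi is)^N+\cdots$ is a nonzero polynomial; so degenerate zeros are isolated in each $\tau_0$-slice. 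That statement is precisely your ``no component of $\mathcal{Z}$ inside $\mathcal{B}$'' hypothesis, and with it (in place of the pole claim) your converse would go through along the same lines as the paper's.
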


After Theorem \ref{thm-premodular-1}, a further question arises: {\it What are the explicit expressions of these pre-modular forms}? This question is very difficult because the weight $\frac{1}{2}\sum n_k(n_k+1)$ is large for general $\mathbf{n}$. Define
\begin{align}\label{z-rs}Z_{r,s}(\tau):=&\zeta(r+s\tau;\tau)-r\eta_1(\tau)-s\eta_2(\tau)\\
=&\zeta(r+s\tau;\tau)-(r+s\tau)\eta_1(\tau)+2\pi is,\nonumber\end{align}
where we used the Legendre relation $\tau\eta_1-\eta_2=2\pi i$.
It is known from \cite{Dahmen,LW2} that (write $Z=Z_{r,s}(\tau)$, $\wp=\wp(r+s\tau|\tau)$ and $\wp^{\prime
}=\wp^{\prime}(r+s\tau|\tau)$ for convenience):
\begin{equation}\label{zrs12}
Z_{r,s}^{(1,0,0,0)}=Z,\quad Z_{r,s}^{(2,0,0,0)}=Z^{3}-3\wp Z-\wp^{\prime},
\end{equation}
\begin{align*}
Z_{r,s}^{(3,0,0,0)}=  &  Z^{6}-15\wp Z^{4}-20\wp^{\prime}Z^{3}+\left(
\tfrac{27}{4}g_{2}-45\wp^{2}\right)  Z^{2}\\
&  -12\wp \wp^{\prime}Z-\tfrac{5}{4}(\wp^{\prime})^{2}.
\end{align*}
{\allowdisplaybreaks%
\begin{align*}
Z_{r,s}^{(4,0,0,0)}=  &  Z^{10}-45\wp Z^{8}-120\wp^{\prime}Z^{7}+(\tfrac
{399}{4}g_{2}-630\wp^{2})Z^{6}-504\wp \wp^{\prime}Z^{5}\\
&  -\tfrac{15}{4}(280\wp^{3}-49g_{2}\wp-115g_{3})Z^{4}+15(11g_{2}-24\wp
^{2})\wp^{\prime}Z^{3} \\
&  -\tfrac{9}{4}(140\wp^{4}-245g_{2}\wp^{2}+190g_{3}\wp+21g_{2}^{2}%
)Z^{2}\label{z-n-4}\\
&  -(40\wp^{3}-163g_{2}\wp+125g_{3})\wp^{\prime}Z+\tfrac{3}{4}(25g_{2}%
-3\wp^{2})(\wp^{\prime})^{2}.
\end{align*}
}%
The above formulas are all for the Lam\'{e} case.
For $n\geq 5$, the explicit expression of $Z_{r,s}^{(n,0,0,0)}(\tau)$ is not known so far. See \cite{CKLW,Dahmen,LW2} for applications of the above formulas.

Here are new examples of $Z_{r,s}^{\bf n}(\tau)$ for the DTV case:
\[Z_{r,s}^{(1,1,0,0)}=Z^2-\wp+e_1,\]
\[Z_{r,s}^{(1,0,1,0)}=Z^2-\wp+e_2,\quad Z_{r,s}^{(1,0,0,1)}=Z^2-\wp+e_3,\]
\[Z_{r,s}^{(2,1,0,0)}=Z^4+3(e_1-2\wp)Z^2-4\wp'Z-3(\wp^2+e_1\wp+e_1^2-\tfrac{g_2}{4}),\]
and similarly, the expression of $Z_{r,s}^{(2,0,1,0)}$ (resp. $Z_{r,s}^{(2,0,0,1)}$) is obtained by replacing $e_1$ in $Z_{r,s}^{(2,1,0,0)}$ with $e_2$ (resp. $e_3$). See \cite{CKL2}.

In order to prove Dahmen and Beukers' conjectural formula of counting integral Lam\'{e} equations with finite monodromy, we proved in \cite{CKL-Dahmen} that $Z_{r,s}^{(n,0,0,0)}(\tau)$ has at most simple zeros. This result is not trivial at all due to two reasons: (1) The explicit expression of $Z_{r,s}^{(n,0,0,0)}(\tau)$ is not known for $n\geq 5$; (2) Even for $n=2,3,4$, the expressions of $Z_{r,s}^{(n,0,0,0)}(\tau)$ are already so complicated that we can not obtain this simple zero property by calculating the highly nontrivial derivative $\frac{d}{d\tau}Z_{r,s}^{(n,0,0,0)}(\tau)$. In \cite{CKL-Dahmen} we proved this simple zero property by showing that $Z_{r,s}^{(n,0,0,0)}(\tau)$ appears in the denominator of expressions of solutions to
certain Painlev\'{e} VI equation. We believe that this assertion should also holds for $Z_{r,s}^{\bf n}(\tau)$, i.e. $Z_{r,s}^{\bf n}(\tau)$ should also appear in the denominator of expressions of solutions to
certain Painlev\'{e} VI equation. But this assertion has not been confirmed so far.

In this paper, we develop a new idea to extend the simple zero property to include the DTV potential. 

\begin{theorem}\label{thm-simplezero} Given $\mathbf{n}$. Then for any fixed $(r,s)\in\mathbb{C}^2\setminus\frac{1}{2}\mathbb{Z}^2$, $Z_{r,s}^{\mathbf{n}
}(\tau)$ has at most simple zeros in $\mathbb{H}$.
\end{theorem}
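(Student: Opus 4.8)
The plan is to combine Theorem~\ref{thm-premodular-1} with the universal law (Theorem~\ref{thm-rsbtau}) to convert the statement about the $\tau$-derivative of $Z^{\mathbf n}_{r,s}$ into a purely algebraic statement about its $Z$-derivative. Fix $(r_0,s_0)\in\mathbb C^2\setminus\frac12\mathbb Z^2$ and suppose $\tau_0$ is a zero of $Z^{\mathbf n}_{r_0,s_0}(\tau)$. By Theorem~\ref{thm-premodular-1} there is $B_0$ with $\varphi_{\mathbf n}(\tau_0,B_0)=(r_0,s_0)$, and since $(r_0,s_0)\notin\frac12\mathbb Z^2$ we are in Case~(a), so $(\tau_0,B_0)\in\Sigma_{\mathbf n}$ and $\varphi_{\mathbf n}$ is a local biholomorphism there. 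The key structural fact is that, pointwise on $\Sigma_{\mathbf n}$, the ``only if'' direction of Theorem~\ref{thm-premodular-1} gives the \emph{identity}
\[\Phi(\tau,B):=Z^{\mathbf n}_{\varphi_{\mathbf n}(\tau,B)}(\tau)\equiv 0,\qquad (\tau,B)\in\Sigma_{\mathbf n}.\]
Writing $\varphi_{\mathbf n}=(r,s)$ and $G:=Z^{\mathbf n}_{r,s}(\tau)$ as a function of the three independent variables $(r,s,\tau)$, I differentiate $\Phi\equiv 0$ in $B$ and in $\tau$ to obtain $r_BG_r+s_BG_s=0$ and $r_\tau G_r+s_\tau G_s+G_\tau=0$. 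The universal law \eqref{brstau} says the Jacobian $r_\tau s_B-r_Bs_\tau$ equals the nonzero constant $\frac{1}{8\pi^2}$; hence if $G_\tau=0$ the two relations force $G_r=G_s=0$. Contrapositively, $(G_r,G_s)\neq(0,0)$ implies $\frac{d}{d\tau}Z^{\mathbf n}_{r_0,s_0}(\tau)\big|_{\tau_0}=G_\tau\neq 0$, i.e. the zero is simple.

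Next I reduce $(G_r,G_s)\neq 0$ to a statement in the single variable $Z$. Writing $Z^{\mathbf n}_{r,s}(\tau)=\mathcal P_{\mathbf n}(Z,\wp,\wp')$ as the monic polynomial of degree $\tfrac12\sum_k n_k(n_k+1)$ in $Z=Z_{r,s}(\tau)$ exhibited in the examples, and using $a:=r+s\tau$, a direct computation with \eqref{z-rs} gives $\partial_r Z=-\wp-\eta_1$, $\partial_s Z=-\tau\wp-\eta_2$, while $\wp,\wp'$ depend on $(r,s)$ only through $a$, so $\partial_s\wp-\tau\partial_r\wp=\partial_s\wp'-\tau\partial_r\wp'=0$. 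The Legendre relation $\tau\eta_1-\eta_2=2\pi i$ then yields the clean operator identity
\[\bigl(\partial_s-\tau\,\partial_r\bigr)Z^{\mathbf n}_{r,s}(\tau)=2\pi i\,\partial_Z\mathcal P_{\mathbf n}(Z,\wp,\wp').\]
Consequently, if $\partial_Z\mathcal P_{\mathbf n}(Z_0,\wp_0,\wp_0')\neq 0$ then $G_s-\tau_0 G_r\neq 0$, so $(G_r,G_s)\neq(0,0)$, and by the previous paragraph $\tau_0$ is a simple zero. Thus the whole theorem is reduced to the assertion that $Z_0=Z_{r_0,s_0}(\tau_0)$ is a \emph{simple root} of the polynomial $Z\mapsto\mathcal P_{\mathbf n}(Z,\wp_0,\wp_0')$, where $\wp_0,\wp_0'=\wp,\wp'(r_0+s_0\tau_0\,|\,\tau_0)$ and, because $(r_0,s_0)\notin\frac12\mathbb Z^2$, we have $\wp_0'\neq 0$.

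The main obstacle is precisely this last step, since the explicit form of $\mathcal P_{\mathbf n}$ is unavailable for general $\mathbf n$. I would prove it by showing that a double root is incompatible with complete reducibility. Concretely, the roots in $Z$ of $\mathcal P_{\mathbf n}(\cdot,\wp_0,\wp_0')$ index exactly the finitely many monodromy data sharing the fixed value $a_0=r_0+s_0\tau_0$, which by \eqref{fc-glo-uni} correspond to distinct values of $B$; a collision of roots (a double root) therefore forces two such branches of $B$ to meet, i.e. a branch point $Q_{\mathbf n}(B_0;\tau_0)=0$ of the spectral curve, contradicting $(\tau_0,B_0)\in\Sigma_{\mathbf n}$. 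The technical heart is to make this rigorous, i.e. to relate the $Z$-discriminant of $\mathcal P_{\mathbf n}$ to the spectral polynomial $Q_{\mathbf n}$ (equivalently, to express $\partial_Z\mathcal P_{\mathbf n}$ at a monodromy point through the Wronskian of the two Bloch solutions, which is nonvanishing iff the monodromy is completely reducible). As a sanity check, for $\mathbf n=(2,0,0,0)$ one has $\mathcal P=Z^3-3\wp Z-\wp'$, whose $Z$-discriminant is $27(g_2\wp+g_3)$; imposing $\mathcal P=\partial_Z\mathcal P=0$ together with $\wp'^2=4\wp^3-g_2\wp-g_3$ forces $g_2=g_3=0$, which is impossible, so every zero is simple unconditionally. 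I expect the general case to require identifying the right structural substitute for this elementary elimination, and this is where the genuine work lies.
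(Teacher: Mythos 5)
Your reduction has a correct core: the operator identity $(\partial_s-\tau\partial_r)Z^{\mathbf n}_{r,s}(\tau)=2\pi i\,\partial_X W_{\mathbf n}(Z_{r,s}(\tau);r+s\tau,\tau)$ is exactly the paper's \eqref{eq-ex-taurs}, and the linear algebra combining it with a nondegenerate Jacobian is sound. The fatal gap is the terminal claim, which you correctly identify as the heart of the matter but then support with a false computation: it is \emph{not} true that $Z_0=Z_{r_0,s_0}(\tau_0)$ is always a simple root of $X\mapsto W_{\mathbf n}(X;\sigma_0,\tau_0)$ when $(\tau_0,B_0)\in\Sigma_{\mathbf n}$. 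In your own sanity check for $\mathbf n=(2,0,0,0)$: imposing $Z^3-3\wp Z-\wp'=0$ and $3Z^2-3\wp=0$ gives $\wp'=-2Z\wp$, hence $\wp'^2=4\wp^3$, and the Weierstrass relation then yields $g_2\wp+g_3=0$ --- i.e.\ $\wp(\sigma_0)=-g_3/g_2$ --- \emph{not} $g_2=g_3=0$ (indeed the $X$-discriminant of this cubic is $27(g_2\wp+g_3)$). So for generic $\tau$ there exist $\sigma_0\notin E_\tau[2]$ over which the polynomial has a double root; these are precisely the critical values of the addition map $\sigma_{\mathbf n}:\overline{Y_{\mathbf n}(\tau)}\to E_\tau$, and the corresponding ramification points carry $C_0\neq 0$, i.e.\ $Q_{\mathbf n}(B_0;\tau_0)\neq 0$, so they are genuine completely reducible monodromy points. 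Your heuristic ``double root $\Rightarrow$ two branches of $B$ collide $\Rightarrow$ $Q_{\mathbf n}(B_0;\tau_0)=0$'' conflates ramification of $\sigma_{\mathbf n}$ with ramification of the hyperelliptic projection $(B,C)\mapsto B$: near a ramification point of $\sigma_{\mathbf n}$ with $C_0\neq 0$ the two fiber points have \emph{distinct} $B$-values merging to $B_0$, with no violation of \eqref{fc-glo-uni} and no vanishing of $Q_{\mathbf n}$. Note that Theorem \ref{thm-5A}-(2) guarantees distinct roots only for $\sigma$ \emph{outside} the branch loci. At a branch point your scheme would need $\partial_r Z^{\mathbf n}_{r,s}\neq 0$, for which you have no tool --- and this is exactly the hard case the theorem must cover.

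This pointwise obstruction is what the paper's proof is engineered to avoid: assuming a zero of order $k\geq 2$, it uses the Painlev\'e VI input (Theorem \ref{thm-pole-z1} together with Lemma \ref{lem-3.1}, via the auxiliary function $1/\wp(p^{\mathbf n_0^+}_{r,s}(\tau);\tau)$, which has a \emph{simple} zero at $\tau_0$) to show that all nearby zeros collapse into a single zero $\tau(r,s)$ of order $k$ depending holomorphically on $(r,s)$; this gives $\partial_X W_{\mathbf n}=0$ identically along a two-parameter family \eqref{fc-fc2}, and only then does it freeze $\tau\equiv\tau_0$, vary $B$, and choose $\sigma=r(B)+s(B)\tau_0$ off the (discrete) branch locus, where Theorem \ref{thm-5A}-(2) yields the contradiction. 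Two further problems with your route. First, it is circular inside this paper: Theorem \ref{thm-rsbtau} is proved in Sections 5--6 through Lemma \ref{lem-6.1}, whose proof explicitly uses the simple-zero property (it needs $\tau(r,s)$ to be the unique, simple, holomorphically varying zero near $\tau_0$); the paper therefore proves Theorem \ref{thm-simplezero} first and without the universal law. Second, your assertion that $(r_0,s_0)\notin\frac12\mathbb Z^2$ forces $\wp'_0\neq 0$ is valid only for real $(r,s)$: for complex monodromy data $r_0+s_0\tau_0$ may lie in $E_{\tau_0}[2]$, or even in $\Lambda_{\tau_0}$ where $Z_{r_0,s_0}(\tau_0)$, $\wp_0$, $\wp'_0$ all blow up --- this degenerate case is precisely why Theorem \ref{thm-premodular-1} and Lemma \ref{lem-s-finite} are needed, and it would require separate treatment in your argument as well.
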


The rest of this paper is organized as follows. In Section 2, we briefly review the construction of the pre-modular form $Z_{r,s}^{(\mathbf{n})}(\tau)$ from \cite{CKL1,CKL2} and prove Theorem \ref{thm-premodular-1}. In Section 3, we establish the connection between Painlev\'{e} VI equation and  the pre-modular form $Z_{r,s}^{(\mathbf{n})}(\tau)$. In Section 4 we prove Theorem \ref{thm-simplezero}. In Section 5, we prove the universal law for two simplest Lam\'{e} case. In Section 6, we prove the universal law for general DTV cases via an induction approach, where Painlev\'{e} VI equation plays a crucial role. Finally in Section 7, we apply the universal law to the algebraic multiplicity of (anti)periodic eigenvalues of the associated Hill operator.
\section{Pre-modular forms}

In this section we prove Theorem \ref{thm-premodular-1}. For this purpose we need to briefly review the constuction of the pre-modular form $Z_{r,s}^{(\mathbf{n})}(\tau)$ from \cite{CKL1,CKL2}. Denote
\[|\mathbf{n}|:=\sum_k n_k\quad\text{for}\;\;\mathbf{n}.\]

(i) Any solution of $H({\bf n},B,\tau)$ is meromorphic in $\mathbb{C}$. Given any $B\in\mathbb{C}$, there exist a unique pair $\pm \boldsymbol{a}:=\pm\{a_1,\cdots,a_{|\mathbf{n}|}\}\subset E_{\tau}$ and constants $c(\pm\boldsymbol{a})\in\mathbb{C}$ (see (\ref{ca-ex}) below) such that
\begin{equation}\label{yby}y_{\boldsymbol{a}}(z)
:=e^{c({\boldsymbol{a}})z}\frac{\prod_{i=1}^{|\mathbf{n}|}\sigma(z-a_i)}{\prod_{k=0}^3\sigma(z-\frac{\omega_k}{2})^{n_k}},\; y_{-\boldsymbol{a}}(z)
:=e^{c({-\boldsymbol{a}})z}\frac{\prod_{i=1}^{|\mathbf{n}|}\sigma(z+a_i)}{\prod_{k=0}^3\sigma(z-\frac{\omega_k}{2})^{n_k}}\end{equation}
are solutions of $H({\bf n},B,\tau)$. Since the DTV potential $I_{\mathbf n}(z;\tau)$ is an even function, $y_{\boldsymbol{a}}(-z)$ is also a solution of $H({\bf n},B,\tau)$ and has the same zeros as $y_{-\boldsymbol{a}}(z)$, so $y_{\boldsymbol{a}}(-z)$ and $y_{-\boldsymbol{a}}(z)$ are linearly dependent.
From here and the transformation law (let $\eta_3=2\zeta(\frac{\omega_3}{2})=\eta_1+\eta_2$)
\begin{equation}\label{tran-law}\sigma(z+\omega_k)=-e^{\eta_k(z+\frac{\omega_k}{2})}\sigma(z),\quad k=1,2,3,\end{equation} it is easy to see that
$y_{-\boldsymbol{a}}(z)=(-1)^{n_1+n_2+n_3}y_{\boldsymbol{a}}(-z)$ and $c(-\boldsymbol{a})=-c(\boldsymbol{a})-\sum_{k=1}^3n_k\eta_k$.

(ii) Recalling the spectral polynomial $Q_{\bf n}(B;\tau)$, $y_{\boldsymbol{a}}(a)$ and $y_{-\boldsymbol{a}}(z)$ are linearly independent if and only if $Q_{\bf n}(B;\tau)\neq 0$, which is also equivalent to
\begin{equation}\label{a-0a}\{a_1,\cdots,a_{|\mathbf{n}|}\}\cap \{-a_1,\cdots,-a_{|\mathbf{n}|}\}=\emptyset\quad\text{in }\;E_{\tau}.
\end{equation}
In this case,
\begin{equation}
c(\boldsymbol{a})=\sum_{i=1}^{|\mathbf{n}|}\zeta(a_{i})  -\sum_{k=1}^{3}\frac{n_{k}\eta_{k}
}{2},\label{ca-ex}
\end{equation}
and the $(r,s)$ defined by
\begin{equation}
\left \{
\begin{array}
[c]{l}%
\sum_{i=1}^{|\mathbf{n}|}a_{i}-\sum_{k=1}^{3}\frac{n_{k}\omega_{k}}{2}=r+s\tau \\
\sum_{i=1}^{|\mathbf{n}|}\zeta(a_{i})  -\sum_{k=1}^{3}\frac{n_{k}\eta_{k}
}{2}=r\eta_{1}+s\eta_{2}
\end{array}
\right. \label{rs2}%
\end{equation}
satisfies $(r,s)\in\mathbb{C}^2\setminus \frac{1}{2}\mathbb{Z}^2$, and
\[\begin{pmatrix}y_{\boldsymbol{a}}(z+\omega_1)\\ y_{-\boldsymbol{a}}(z+\omega_1)\end{pmatrix}=\begin{pmatrix}
e^{-2\pi is} & 0\\
0 & e^{2\pi is}%
\end{pmatrix}\begin{pmatrix}y_{\boldsymbol{a}}(z)\\ y_{-\boldsymbol{a}}(z)\end{pmatrix},\]
\[\begin{pmatrix}y_{\boldsymbol{a}}(z+\omega_2)\\ y_{-\boldsymbol{a}}(z+\omega_2)\end{pmatrix}=\begin{pmatrix}
e^{2\pi ir} & 0\\
0 & e^{-2\pi ir}%
\end{pmatrix}\begin{pmatrix}y_{\boldsymbol{a}}(z)\\ y_{-\boldsymbol{a}}(z)\end{pmatrix},\]
i.e. with respect to $y_{\boldsymbol{a}}(z)$ and $y_{-\boldsymbol{a}}(z)$, the monodromy matrices are given by
\begin{equation}
\rho(\ell_{1})=%
\begin{pmatrix}
e^{-2\pi is} & 0\\
0 & e^{2\pi is}%
\end{pmatrix}
,\text{ }\rho(\ell_{2})=%
\begin{pmatrix}
e^{2\pi ir} & 0\\
0 & e^{-2\pi ir}%
\end{pmatrix}.
\label{61.3512}%
\end{equation}

(iii) Define
\begin{equation}
Y_{\mathbf{n}}(\tau)  :=\left \{
\begin{array}
[c]{r}%
\boldsymbol{a}=\{a_{1},\cdot \cdot \cdot,a_{|\mathbf{n}|}\}\text{ }\in $Sym$^{|\mathbf{n}|}E_{\tau}\,\big|\,\text{
}y_{\boldsymbol{a}}(z)  \text{ defined in }\\
\text{(\ref{yby}) is a solution of H}(\mathbf{n},B,\tau) \text{ for some } B
\end{array}
\right \}. \label{set}%
\end{equation}
Then $\overline{Y_{\mathbf{n}}(\tau)}=Y_{\mathbf{n}}(\tau)\cup \{\infty_0\}$ is a hyperelliptic curve with arithmetic genus $g$ with
\[
\infty_0:=\bigg(  \overset{n_{0}}{\overbrace{0,\cdot \cdot \cdot,0}}%
,\overset{n_{1}}{\overbrace{\tfrac{\omega_{1}}{2},\cdot \cdot \cdot,\tfrac
{\omega_{1}}{2}}},\overset{n_{2}}{\overbrace{\tfrac{\omega_{2}}{2},\cdot
\cdot \cdot,\tfrac{\omega_{2}}{2}}},\overset{n_{3}}{\overbrace{\tfrac{\omega_{3}%
}{2},\cdot \cdot \cdot,\tfrac{\omega_{3}}{2}}}\bigg)  .
\]
The affine part
\begin{equation}\label{eq-affine}Y_{\mathbf{n}}(\tau)\simeq \{(B,C)|C^{2}%
=Q_{\mathbf{n}}(B;\tau)\},\end{equation}
and the
branch points of $Y_{\mathbf{n}}(\tau)$ are precisely those $\{a_{i}\}_{i=1}^{|\mathbf{n}|}\in
Y_{n}(\tau)$ such that
\begin{equation}\label{bran}\{a_1,\cdots,a_{|\mathbf{n}|}\}
=\{-a_1,\cdots,-a_{|\mathbf{n}|}\}\quad\text{in }\;E_{\tau}.\end{equation}

(iv)
The first formula of (\ref{rs2}) motivates us to study the addition map $\sigma
_{\mathbf{n}}:\overline{Y_{\mathbf{n}}(  \tau )}\rightarrow E_{\tau}$ (also called a covering map in \cite[Section 4]{Takemura4}):
\begin{equation}\label{fc-fc3}
\sigma_{\mathbf{n}}( \boldsymbol{a}) : =\sum_{i=1}^{N}
a_{i}  -\sum_{k=1}^{3}\tfrac{n_{k}\omega_{k}}{2}.%
\end{equation}
Since $2\sum_{k=1}^{3}\tfrac{n_{k}\omega_{k}}{2}=0$ in $E_{\tau}$, we have
\[
\sigma_{\mathbf{n}}( -\boldsymbol{a})  =-\sum_{i=1}^{N}
a_{i}  -\sum_{k=1}^{3}\tfrac{n_{k}\omega_{k}}{2}=-\sigma_{\mathbf{n}}( \boldsymbol{a})\quad\text{in }\;E_{\tau}.%
\]
Since the algebraic curve $\overline{Y_{\mathbf{n}}(\tau)}$ is irreducible, $\sigma_{\mathbf{n}}$ is a {\it finite morphism} and $\deg\sigma_{\mathbf{n}}$ is well-defined. We proved in \cite{CKL1} that
\[
\deg\sigma_{\mathbf{n}}=\frac{1}{2}\sum_{k=0}^3 n_k(n_k+1).
\]
Let $K(\overline{Y_{\mathbf{n}}(\tau)})$ be the field of rational functions on $\overline
{Y_{\mathbf{n}}(\tau)}$. Then
$K(\overline{Y_{\mathbf{n}}(\tau)})$ is a finite extension of $K(E_{\tau})$ and%
\[
\left[  K(\overline{Y_{\mathbf{n}}(\tau)}):K(E_{\tau})\right]  =\frac{1}{2}\sum_{k=0}^3 n_k(n_k+1).
\]

(v) Define $\mathbf{z}_{\mathbf{n}}: \overline{Y_{\mathbf{n}}(\tau)}\to\mathbb{C}\cup\{\infty\}$ by
\[\mathbf{z}_{\mathbf{n}}(a_1,\cdots,a_N):=\zeta\Bigg(\sum_{i=1}^N a_i-\sum_{k=1}^{3}\frac{n_{k}\omega_{k}}{2}\Bigg)-\sum_{i=1}^{N}\zeta(a_{i})  +\sum_{k=1}^{3}\frac{n_{k}\eta_{k}
}{2}.\]
Then $\mathbf{z}_{\mathbf{n}}\in K(\overline{Y_{\mathbf{n}}(\tau)})$ is a primitive generator of the finite field
extension of $K(\overline{Y_{\mathbf{n}}(\tau)})$ over $K(E_{\tau})$, and there is a minimal polynomial
\begin{equation}
W_{\mathbf{n}}(X)=W_{\mathbf{n}}(X;\sigma_{\mathbf{n}},\tau)\in \mathbb{Q}[e_1(\tau),e_2(\tau
),e_3(\tau),\wp(\sigma_{\mathbf{n}};\tau),\wp^{\prime}(\sigma_{\mathbf{n}};\tau)][X] \label{kk-ll}
\end{equation}
of the field extension $K(\overline{Y_{\mathbf{n}}(\tau)})$ over $K(E_{\tau})$ which defines the
covering map $\sigma_{\mathbf{n}}$ between algebraic curves.
Then the pre-modular form $Z_{r,s}^{\mathbf{n}}(\tau)$ is construct from $W_{\mathbf{n}}(X)$ by the following result.

\begin{theorem} \cite[Theorem 2.4]{CKL2} \label{thm-5A}

\begin{itemize}
\item[(1)] $W_{\mathbf{n}}(X;\sigma_{\mathbf{n}},\tau)$ is a monic polynomial of
$X$-degree $\frac{1}{2}\sum_k n_k(n_k+1)$ such that
\[
W_{\mathbf{n}}(\mathbf{z}_{\mathbf {n}}(\boldsymbol{a});\sigma_{\mathbf {n}}(\boldsymbol{a}),\tau)=0.
\]
Moreover, $W_{\mathbf{n}}$ is homogenous of weight $\frac{1}{2}\sum_k n_k(n_k+1)$, where the weights of $X$, $\wp(\sigma_{\mathbf{n}})$, $e_k$'s, $\wp'(\sigma_{\mathbf{n}})$ are $1$, $2$, $2$, $3$ respectively.

\item[(2)] Fix any $\tau$. For each $\sigma\in E_{\tau
}\backslash E_{\tau}[2]$ being outside the branch loci of $\sigma
_{\mathbf{n}}:\overline{Y_{\mathbf{n}}(\tau)}\rightarrow E_{\tau}$, $W_{\mathbf{n}}(\cdot;\sigma,\tau)$
has $\frac{1}{2}\sum_k n_k(n_k+1)$ distinct zeros.

\item[(3)] Define $Z_{r,s}
^{\mathbf{n}}(\tau):=W_{\mathbf{n}}(Z_{r,s}(\tau);r+s\tau,\tau)$. Then $Z_{r,s}
^{\mathbf{n}}(\tau)$ is the desired pre-modular form as stated in Theorem \ref{thm-premodular}.
\end{itemize}
\end{theorem}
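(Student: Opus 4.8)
The plan is to reduce ``at most simple zeros'' to a transversality statement for one explicit function, and then to extract that transversality from the Painlev\'e VI picture. First I would expose the $\tau$-dependence of $Z_{r,s}^{\mathbf{n}}$ by factoring the minimal polynomial. By Theorem \ref{thm-5A}, $W_{\mathbf{n}}(X;\sigma,\tau)$ is monic of degree $d:=\tfrac12\sum_k n_k(n_k+1)=\deg\sigma_{\mathbf{n}}$ in $X$, with roots exactly the values $\mathbf{z}_{\mathbf{n}}(\boldsymbol{a})$, $\boldsymbol{a}\in\sigma_{\mathbf{n}}^{-1}(\sigma)$. Fix $(r,s)\in\mathbb{C}^2\setminus\tfrac12\mathbb{Z}^2$ and a zero $\tau_0$ of $Z_{r,s}^{\mathbf{n}}(\tau)=W_{\mathbf{n}}(Z_{r,s}(\tau);r+s\tau,\tau)$. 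Provided $\sigma_0:=r+s\tau_0\notin E_{\tau_0}[2]$, this is not a branch value of $\sigma_{\mathbf{n}}$ (a branch point $\boldsymbol{b}$ obeys $\boldsymbol{b}=-\boldsymbol{b}$ by \eqref{bran}, so $2\sigma_{\mathbf{n}}(\boldsymbol{b})=0$; hence all branch values lie in $E_\tau[2]$), so by Theorem \ref{thm-5A}(2) the $d$ sheets $\boldsymbol{a}_1(\tau),\dots,\boldsymbol{a}_d(\tau)$ of $\sigma_{\mathbf{n}}^{-1}(r+s\tau)$ are holomorphic and pairwise distinct near $\tau_0$, and
\[Z_{r,s}^{\mathbf{n}}(\tau)=\prod_{j=1}^{d}\bigl(Z_{r,s}(\tau)-\mathbf{z}_{\mathbf{n}}(\boldsymbol{a}_j(\tau))\bigr).\]
Writing $\boldsymbol{a}(\tau)$ for the vanishing sheet and using $\sigma_{\mathbf{n}}(\boldsymbol{a}(\tau))=r+s\tau$, the definition of $\mathbf{z}_{\mathbf{n}}$ together with \eqref{ca-ex} gives $\mathbf{z}_{\mathbf{n}}(\boldsymbol{a}(\tau))=\zeta(r+s\tau;\tau)-c(\boldsymbol{a}(\tau))$, whence by \eqref{z-rs}
\[G(\tau):=Z_{r,s}(\tau)-\mathbf{z}_{\mathbf{n}}(\boldsymbol{a}(\tau))=c(\boldsymbol{a}(\tau))-r\eta_1(\tau)-s\eta_2(\tau).\]
If $(r(\tau),s(\tau))$ is the monodromy datum attached to $\boldsymbol{a}(\tau)$ by \eqref{rs2}, then $r(\tau)+s(\tau)\tau=r+s\tau$ and $c(\boldsymbol{a}(\tau))=r(\tau)\eta_1+s(\tau)\eta_2$, so the Legendre relation collapses this to $G(\tau)=(s(\tau)-s)(\eta_2-\tau\eta_1)=-2\pi i\,(s(\tau)-s)$. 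Since the other $d-1$ factors are nonzero at $\tau_0$, the order of vanishing of $Z_{r,s}^{\mathbf{n}}$ at $\tau_0$ equals that of $s(\tau)-s$, and the theorem becomes the transversality assertion $s'(\tau_0)\neq 0$.

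The hard part is exactly this transversality. Differentiating $G$ by hand is unattractive, since it means differentiating the Bethe-type relations pinning the zeros $a_i(\tau)$ of the solution \eqref{yby}; instead I would feed in the connection between Painlev\'e VI and $Z_{r,s}^{\mathbf{n}}$ developed in Section 3. The idea is that the isomonodromic deformation at fixed monodromy $(r,s)$ lives in a one-parameter-larger family carrying an apparent singularity $\lambda(\tau)$, governed by an elliptic Painlev\'e VI equation whose four coefficients are fixed by $\mathbf{n}$, and that a zero $\tau_0$ of $Z_{r,s}^{\mathbf{n}}$ is precisely a $\tau$ at which $\lambda(\tau)$ reaches $E_\tau[2]$, so that the deformed equation degenerates onto the genuine DTV equation H$(\mathbf{n},B_0,\tau_0)$. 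Because $\lambda(\tau)$ is meromorphic on $\mathbb{H}$ by the Painlev\'e property and, by its local expansion, crosses the $2$-torsion transversally, a first-order comparison of the isomonodromic family with the slice $\sigma_{\mathbf{n}}(\boldsymbol{a}(\tau))=r+s\tau$ should yield $s'(\tau_0)\neq0$. For the Lam\'e parameters $\mathbf{n}=(n,0,0,0)$ this transversality is exactly the simple-zero property of \cite{CKL-Dahmen}, proved there through the explicit appearance of $Z_{r,s}^{(n,0,0,0)}$ in a Painlev\'e VI denominator. The novelty --- and the main obstacle --- is to make this work for genuine DTV parameters, where no such explicit denominator is available: I expect to set up the dictionary ``zeros of $Z_{r,s}^{\mathbf{n}}$ $=$ transversal $2$-torsion crossings of the Painlev\'e VI solution'' structurally, and to run an induction on $\mathbf{n}$ using the B\"acklund transformations of Painlev\'e VI that shift a single $n_k$ by $\pm1$, in parallel with the inductive scheme used later for the universal law.

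It remains to dispose of the parameters excluded above, namely zeros $\tau_0$ with $r+s\tau_0\in E_{\tau_0}[2]$. Here the vanishing sheet $\boldsymbol{a}(\tau_0)$ is still never a ramification point, since ramification points are the branch points \eqref{bran}, which sit over $Q_{\mathbf{n}}(B;\tau_0)=0$ and carry monodromy in $\tfrac12\mathbb{Z}^2$, contradicting $(r,s)\notin\tfrac12\mathbb{Z}^2$. Using the involution identity $\mathbf{z}_{\mathbf{n}}(-\boldsymbol{a})=-\mathbf{z}_{\mathbf{n}}(\boldsymbol{a})$ and the injectivity \eqref{fc-glo-uni}, one checks that the only root of $W_{\mathbf{n}}(\cdot;\sigma_0,\tau_0)$ that can collide with $Z_{r,s}(\tau_0)$ is the one coming from the conjugate sheet $-\boldsymbol{a}(\tau_0)$, and that such a collision forces $Z_{r,s}(\tau_0)=0$. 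Thus the sole remaining case is the discrete locus where $Z_{r,s}(\tau_0)=0$ and $r+s\tau_0\in E_{\tau_0}[2]$ hold simultaneously; I would treat it separately, expecting to show by a dedicated local analysis (again through the Painlev\'e VI solution) that no such double collision actually produces a zero of order $\geq2$. Assembling the three cases yields that every zero of $Z_{r,s}^{\mathbf{n}}(\tau)$ in $\mathbb{H}$ is simple.
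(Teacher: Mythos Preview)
Your proposal is a proof of Theorem~\ref{thm-simplezero}, not of Theorem~\ref{thm-5A} (the latter is quoted from \cite{CKL2} and not proved in this paper). I compare with the paper's proof of Theorem~\ref{thm-simplezero} in Section~\ref{sec-simplezero}.

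The factorization of $Z_{r,s}^{\mathbf{n}}$ and the Legendre collapse $G(\tau)=-2\pi i\,(s(\tau)-s)$ are nice, but the setup rests on a false claim: you assert that the branch locus of $\sigma_{\mathbf{n}}:\overline{Y_{\mathbf{n}}(\tau)}\to E_\tau$ lies over $E_\tau[2]$, citing \eqref{bran}. Equation~\eqref{bran} describes the fixed points of the hyperelliptic involution on $\overline{Y_{\mathbf{n}}(\tau)}$, i.e.\ the Weierstrass points of the curve, not the ramification of the addition map $\sigma_{\mathbf{n}}$. These are unrelated; the paper only knows the ramification of $\sigma_{\mathbf{n}}$ to be a discrete subset of $E_\tau$ (Step~4 of the proof), and Theorem~\ref{thm-5A}(2) explicitly imposes ``outside the branch loci of $\sigma_{\mathbf{n}}$'' in addition to $\sigma\notin E_\tau[2]$. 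So you cannot guarantee $d$ distinct holomorphic sheets over $\sigma_0$, and hence cannot conclude that the non-vanishing $d-1$ factors stay nonzero at $\tau_0$.

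The more serious gap is the transversality $s'(\tau_0)\neq 0$. You only sketch a plan (interpret $\tau_0$ as a $2$-torsion crossing of a PVI solution, then induct on $\mathbf{n}$ via B\"acklund transformations), but provide no mechanism tying $s'(\tau_0)$ to the PVI local expansion, and the induction is merely announced. The paper's argument is structurally different and complete, with no induction and no case analysis: assume the zero has order $k\ge 2$; by Theorem~\ref{thm-pole-z1} and Lemma~\ref{lem-3.1}, $F_{r,s}(\tau):=1/\wp(p_{r,s}^{\mathbf{n}_0^+}(\tau);\tau)$ has a \emph{simple} zero at $\tau_0$, so a Rouch\'e count forces the $k$ nearby zeros of $Z_{r,s}^{\mathbf{n}}$ for perturbed $(r,s)$ to coincide, propagating the order-$k$ zero holomorphically to an open $U\ni(r_0,s_0)$; differentiating $Z_{r,s}^{\mathbf{n}}(\tau(r,s))\equiv 0$ then gives $\partial_X W_{\mathbf{n}}\bigl(Z_{r,s}(\tau(r,s));r+s\tau(r,s),\tau(r,s)\bigr)\equiv 0$ on $U$; finally, varying $B$ near $B_0$ so that $r(B)+s(B)\tau_0$ avoids the discrete branch locus of $\sigma_{\mathbf{n}}$ contradicts Theorem~\ref{thm-5A}(2). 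The PVI input is used only once, to produce a single function with a guaranteed simple zero at $\tau_0$; no explicit denominator formula, no $E_\tau[2]$ edge cases, and no tracking of the branch $\boldsymbol{a}(\tau)$ are needed.
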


Here we have the following simple observation.

\begin{lemma}\label{lem-s-finite}
Fix any $\tau_0$. There is a polynomial $g_{\mathbf{n}}(s)$ of degree $\frac{1}{2}\sum{n_k(n_k+1)}$ such that if $(r_0,s_0)\in\mathbb{C}^2\setminus\frac{1}{2}\mathbb{Z}^2$ satisfies $r_0+s_0\tau_0=0$ and $Z_{r_0,s_0}^{\mathbf{n}}(\tau_0)=0$, then $g_{\mathbf{n}}(s_0)=0$.
\end{lemma}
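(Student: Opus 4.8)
The plan is to change coordinates and expand the pre-modular form as a Laurent series in the addition parameter, then read off its constant ($\sigma^0$) coefficient.

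First I would pass from $(r,s)$ to the coordinates $(\sigma,s)$, where $\sigma:=r+s\tau_0$, so that the locus $r_0+s_0\tau_0=0$ becomes simply $\sigma=0$. By Theorem \ref{thm-5A}(3) together with \eqref{z-rs},
\[Z^{\mathbf n}_{r,s}(\tau_0)=W_{\mathbf n}\big(\zeta(\sigma;\tau_0)-\sigma\eta_1(\tau_0)+2\pi i s;\,\sigma,\tau_0\big),\]
and the crucial structural point is that, for fixed $\sigma$, the first argument $Z_{r,s}(\tau_0)=\zeta(\sigma)-\sigma\eta_1+2\pi i s$ is \emph{affine-linear in $s$ with $s$-coefficient $2\pi i$}. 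Substituting the Laurent expansions $\zeta(\sigma)=\sigma^{-1}+O(\sigma^3)$, $\wp(\sigma)=\sigma^{-2}+O(\sigma^2)$, $\wp'(\sigma)=-2\sigma^{-3}+O(\sigma)$ near $\sigma=0$ (with the $e_k(\tau_0)$ being constants) into the polynomial $W_{\mathbf n}$ yields, for each fixed $s$, a Laurent series
\[Z^{\mathbf n}_{r,s}(\tau_0)=\sum_{j\ge -N}c_j(s)\,\sigma^{j},\]
whose coefficients $c_j(s)$ are polynomials in $s$, with $N$ controlled by the pole orders and the weights recorded in Theorem \ref{thm-5A}(1).

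Next I would set $g_{\mathbf n}(s):=c_0(s)$, the $\sigma^0$-coefficient, and show $\deg g_{\mathbf n}=d:=\tfrac12\sum_k n_k(n_k+1)$. Since only the first slot of $W_{\mathbf n}$ carries $s$, and linearly, the whole expression $Z^{\mathbf n}_{r,s}(\tau_0)$ has $s$-degree at most $d=\deg_X W_{\mathbf n}$, and the top power $s^d$ can arise only from the monic leading term $X^d$ of $W_{\mathbf n}$ (Theorem \ref{thm-5A}(1)). Expanding $X^d=(\zeta(\sigma)-\sigma\eta_1+2\pi i s)^d$, the $s^d$-contribution is exactly $(2\pi i)^d s^d$, which is \emph{independent of $\sigma$}; hence it lands entirely in $c_0$. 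Thus the $s^d$-coefficient of $c_0$ is $(2\pi i)^d\neq0$, while every $c_j$ with $j\neq0$ has $s$-degree at most $d-1$. In particular $g_{\mathbf n}=c_0$ is a genuine polynomial of degree exactly $d$, hence with finitely many roots.

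Finally I would conclude. If $(r_0,s_0)$ lies on the locus, so $\sigma=0$, and $Z^{\mathbf n}_{r_0,s_0}(\tau_0)=0$, then in particular the value is finite there; finiteness forces the principal part of the $\sigma$-expansion to vanish at $s_0$, i.e. $c_j(s_0)=0$ for all $j<0$, and makes the value equal to the regular part $c_0(s_0)$. As this value is $0$, we obtain $g_{\mathbf n}(s_0)=c_0(s_0)=0$, which is the assertion. The computation itself is routine; the one point that needs care — and the conceptual crux — is the bookkeeping that isolates the monic top term of $W_{\mathbf n}$ inside the $\sigma^0$-coefficient, since this is precisely what pins $\deg g_{\mathbf n}$ to $d$ and yields the named ``$s$-finiteness''. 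Here the homogeneity in Theorem \ref{thm-5A}(1) does the work: the lower-order terms of $W_{\mathbf n}$, which carry the factors $\wp(\sigma),\wp'(\sigma),e_k$, contribute only $s$-powers below $d$, so no cancellation can drop the degree of $g_{\mathbf n}$.
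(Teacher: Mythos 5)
Your proposal is correct and follows essentially the same route as the paper's own proof: expand $Z^{\mathbf{n}}_{r,s}(\tau_0)$ as a Laurent series in the addition parameter $\sigma=r+s\tau_0$ with coefficients polynomial in $s$, take $g_{\mathbf{n}}$ to be the $\sigma^0$-coefficient (whose degree $\tfrac12\sum_k n_k(n_k+1)$ is pinned down by the monic top term $X^d$ of $W_{\mathbf{n}}$ contributing $(2\pi i s)^d$), and use finiteness plus vanishing at the point to kill the principal part and then the constant term. The only difference is cosmetic: you work in the coordinates $(\sigma,s)$, whereas the paper writes $\alpha=r_0+s\tau_0$ with $r_0$ fixed, and your formulation if anything makes the final step (restricting to $s=s_0$ so that the Laurent coefficients become constants before letting $\sigma\to 0$) slightly more transparent.
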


\begin{proof}
Clearly $Z_{r_0,s}^{\mathbf{n}}(\tau_0)$ is meromorphic in $s$. Denote $\alpha=r_0+s\tau_0$, i.e. $\alpha\to 0$ as $s\to s_0$. It is well known that
\[\zeta(\alpha;\tau_0)=\frac{1}{\alpha}+\sum_{j=1}^\infty a_j\alpha^{2j+1},\]
\[\wp(\alpha;\tau_0)=\frac{1}{\alpha^2}-\sum_{j=1}^\infty (2j+1)a_j\alpha^{2j},\]
\[\wp'(\alpha;\tau_0)=\frac{-2}{\alpha^3}-\sum_{j=1}^\infty 2j(2j+1)a_j\alpha^{2j-1},\]
where $a_j\in \mathbb{Q}[g_2(\tau_0),g_3(\tau_0)]\subset \mathbb{Q}[e_1(\tau_0),
e_2(\tau_0),e_3(\tau_0)]$.
Then \eqref{z-rs} gives
\[Z_{r_0,s}(\tau_0)=\frac{1}{\alpha}+2\pi i s-\eta_1(\tau_0)\alpha+\sum_{j=1}^\infty a_j\alpha^{2j+1}.\]
Recalling (\ref{kk-ll}) and Theorem \ref{thm-5A} that
\begin{align}\label{eq-eq}
Z_{r_0,s}^{\mathbf{n}}(\tau_0)=Z_{r_0,s}(\tau_0)^{\frac{1}{2}\sum_k n_k(n_k+1)}+\sum_{j=0}^{\frac{1}{2}\sum_k n_k(n_k+1)-2}b_jZ_{r_0,s}(\tau_0)^{j},
\end{align}
where \[b_j\in\mathbb{Q}[\wp(\alpha;\tau_0),\wp'(\alpha;\tau_0),e_1(\tau_0),
e_2(\tau_0),e_3(\tau_0)]\]
is homogeneous weight of $\frac{1}{2}\sum_k n_k(n_k+1)-j$, where the weights of $\wp(\alpha;\tau_0)$, $e_k(\tau_0), \wp'(\alpha;\tau_0)$ are $2,2,3$ respectively.
Inserting the above expansions into (\ref{eq-eq}), we obtain
\[Z_{r_0,s}^{\mathbf{n}}(\tau_0)=\sum_{j=-\frac{1}{2}\sum_k n_k(n_k+1)}^{+\infty}d_j \alpha^{j},\]
where
\[d_j=d_j(s)\in \mathbb{Q}[\eta_1(\tau_0),e_1(\tau_0),
e_2(\tau_0),e_3(\tau_0)][2\pi i s],\]
in particular, \[d_{0}(s)=(2\pi i s)^{\frac{1}{2}\sum_k n_k(n_k+1)}+\text{lower order terms}\]
is of degree $\frac{1}{2}\sum_k n_k(n_k+1)$. Now by $Z_{r_0,s_0}^{\mathbf{n}}(\tau_0)=0$ and $\alpha\to 0$ as $s\to s_0$, we obtain
\[d_j(s_0)=0\quad\text{for all }\; j\leq 0.\]
The proof is complete by letting $g_{\mathbf{n}}(s)=d_{0}(s)$.
\end{proof}

Now we apply the above theory to prove Theorem \ref{thm-premodular-1}.

\begin{proof}[Proof of Theorem \ref{thm-premodular-1}] Fix any $(r_0,s_0)\in\mathbb{C}^2\setminus\frac{1}{2}\mathbb{Z}^2$ and $\tau_0\in\mathbb{H}$ such that $r_0+s_0\tau_0\in\Lambda_{\tau_0}$. By replacing $(r_0,s_0)$ with $(r_0+m_1,s_0+m_2)$, $m_1, m_2\in\mathbb{Z}$, if necessary, we may assume $r_0+s_0\tau_0=0$.

{\bf Step 1.} We prove the sufficient part.
Suppose there is $B_0$ such that $(r_0,s_0)$ is the monodromy data of H$(\mathbf{n},B_0,\tau_0)$, we need to prove $Z_{r_0,s_0}^{\mathbf{n}}(\tau_0)=0$.

By $Q_{\mathbf{n}}(B_0;\tau_0)\neq 0$, there is small $\varepsilon>0$ such that for any  $|B-B_0|<\varepsilon$, $Q_{\mathbf{n}}(B;\tau_0)\neq 0$. Consequently, we may assume that the monodromy data of H$(\mathbf{n},B,\tau_0)$ is $(r(B),s(B))\notin\frac12\mathbb{Z}^2$ satisfying $(r(B),s(B))\to (r_0,s_0)$ as $B\to B_0$. Since the addition map $\sigma_{\mathbf{n}}$ is a finite morphism, the pre-image $\sigma_{\mathbf{n}}^{-1}(0)$ is finite, so except finite $B$'s we have $r(B)+s(B)\tau_0\notin \Lambda_{\tau_0}$ and then Theorem \ref{thm-premodular} implies
$Z_{r(B),s(B)}^{\mathbf{n}}(\tau_0)=0.$
From here and the continuity of $Z_{r,s}^{\mathbf{n}}(\tau_0)$ with respect to $(r,s)$, we obtain $Z_{r_0,s_0}^{\mathbf{n}}(\tau_0)=0$.

{\bf Step 2.} We prove the necessary part. Suppose $Z_{r_0,s_0}^{\mathbf{n}}(\tau_0)=0$, we need to prove the existence of $B_0$ such that $(r_0,s_0)$ is the monodromy data of H$(\mathbf{n},B_0,\tau_0)$.

Since $Z_{r,s}^{\mathbf{n}}(\tau_0)$ is meromorphic in $(r,s)\notin\frac12\mathbb{Z}^2$, there is small $\varepsilon>0$ such that for any $|s-s_0|<\varepsilon$, $Z_{r,s}^{\mathbf{n}}(\tau_0)$ as a function of $r$ has a zero $r(s)$ satisfying $r(s)\to r_0$ as $s\to s_0$. Then by Lemma \ref{lem-s-finite} and by taking $\varepsilon$ smaller if necessarily, we have $(r(s),s)\notin\frac12\mathbb{Z}^2$ and $r(s)+s\tau_0\notin\Lambda_{\tau_0}$ for any $0<|s-s_0|<\varepsilon$. From here and $Z_{r(s),s}^{\mathbf{n}}(\tau_0)=0$, it follows from Theorem \ref{thm-premodular} that there is $B(s)$ such that $(r(s),s)$ is the monodromy data of H$(\mathbf{n},B(s),\tau_0)$.
Recalling (\ref{ca-ex})-(\ref{rs2}) that the corresponding $c(\boldsymbol{a})=r(s)\eta_1(\tau_0)+s\eta_2(\tau_0)$,
since we proved in \cite[(5.7)]{CKL1} that
$B(s)\to \infty$ if and only if the corresponding $c(\boldsymbol{a})\to \infty$, we conclude from $(r(s),s)\to (r_0,s_0)$ that $B(s)$ are uniformly bounded as $s\to s_0$ and so up to a subsequence, $B(s)\to B_0$ for some $B_0$. Consequently, $(r_0,s_0)$ is the monodromy data of H$(\mathbf{n},B_0,\tau_0)$. This completes the proof.
\end{proof}

\section{Painlev\'{e} VI equation and Pre-modular forms}

Our proofs of Theorems \ref{thm-rsbtau} and \ref{thm-simplezero} are based on the connection between H$(\mathbf{n}, B, \tau)$ and Painlev\'{e} VI equation. First we briefly review some basic facts about Painlev\'{e} VI equation.

\subsection{Painlev\'{e} VI equation}
The well-known Painlev\'{e} VI equation (PVI) with four free parameters
$(\alpha,\beta,\gamma,\delta)$  is written
as
{\allowdisplaybreaks
\begin{align}
\frac{d^{2}\lambda}{dt^{2}}=  &  \frac{1}{2}\left(  \frac{1}{\lambda}+\frac
{1}{\lambda-1}+\frac{1}{\lambda-t}\right)  \left(  \frac{d\lambda}{dt}\right)
^{2}-\left(  \frac{1}{t}+\frac{1}{t-1}+\frac{1}{\lambda-t}\right)
\frac{d\lambda}{dt}\nonumber \\
&  +\frac{\lambda(\lambda-1)(\lambda-t)}{t^{2}(t-1)^{2}}\left[  \alpha
+\beta \frac{t}{\lambda^{2}}+\gamma \frac{t-1}{(\lambda-1)^{2}}+\delta
\frac{t(t-1)}{(\lambda-t)^{2}}\right]  . \label{46}%
\end{align}
}%
Due to its connection with many different disciplines in mathematics and
physics, PVI has been extensively studied in the past several
decades. We refer the readers to the texts \cite{FIK,GP} for a detailed introduction of PVI.

One of the fundamental properties for PVI is the so-called
\emph{Painlev\'{e} property}, which says that any solution $\lambda(t)$ of
PVI has neither movable branch points nor movable essential
singularities; in other words, for any $t_{0}\in \mathbb{C}\backslash \{0,1\}$,
either $\lambda(t)$ is holomorphic at $t_{0}$ or $\lambda(t)$ has a pole at $t_{0}$. Therefore, it is reasonable to lift PVI to
the covering space $\mathbb{H=}\{ \tau|\operatorname{Im}\tau>0\}$ of
$\mathbb{C}\backslash \{0,1\}$ by the following transformation:%
\begin{equation}
t=\frac{e_{3}(\tau)-e_{1}(\tau)}{e_{2}(\tau)-e_{1}(\tau)},\text{ \ }%
\lambda(t)=\frac{\wp(p(\tau);\tau)-e_{1}(\tau)}{e_{2}(\tau)-e_{1}(\tau)}.
\label{II-130}%
\end{equation}
Then $\lambda(t)$ solves PVI if and only if $p(\tau)$ satisfies the following
\emph{elliptic form} of PVI (EPVI):
\begin{equation}
\frac{d^{2}p(\tau)}{d\tau^{2}}=\frac{-1}{4\pi^{2}}\sum_{k=0}^{3}\alpha_{k}%
\wp^{\prime}\left( p(\tau)+\tfrac{\omega_{k}}{2};
\tau \right)  , \label{124}%
\end{equation}
with parameters given by%
\begin{equation}
\left(  \alpha_{0},\alpha_{1},\alpha_{2},\alpha_{3}\right)  =\left(
\alpha,-\beta,\gamma,\tfrac{1}{2}-\delta \right)  . \label{126-0}%
\end{equation}
See e.g. \cite{Babich-Bordag,Y.Manin} for the proof. The Painlev\'{e} property implies that
function $\wp(p(\tau)|\tau)$ is a single-valued meromorphic function in
$\mathbb{H}$. This is an advantage of making the transformation (\ref{II-130}).

\begin{remark}
\label{identify}Clearly for any $m_{1},m_{2}\in \mathbb{Z}$, $\pm p(\tau
)+m_{1}+m_{2}\tau$ is also a solution of the elliptic form (\ref{124}). Since
they all give the same solution $\lambda(t)$ of PVI via (\ref{II-130}), we
always identify all these $\pm p(\tau)+m_{1}+m_{2}\tau$ with the
same one $p(\tau)$.
\end{remark}

From now on we consider PVI with parameters
\begin{align}
(\alpha,\beta,\gamma,\delta)=  &  \left(  \tfrac{1}{2}(n_{0}+\tfrac{1}{2}%
)^{2},\text{ }-\tfrac{1}{2}(n_{1}+\tfrac{1}{2})^{2},\text{ }\tfrac{1}{2}%
(n_{2}+\tfrac{1}{2})^{2},\right. \nonumber \\
&  \left.  \tfrac{1}{2}-\tfrac{1}{2}(n_{3}+\tfrac{1}{2})^{2}\right)  ,\text{
}n_{k}\in \mathbb{N}\text{ for all }k, \label{parameter}%
\end{align}
and denoted it by PVI$_{\mathbf{n}}$;
or equivalently
EPVI with parameters
\begin{equation}
\alpha_{k}=\tfrac{1}{2}(n_{k}+\tfrac{1}{2})^{2},\text{ }n_{k}\in
\mathbb{N}\text{ for all }k,
\label{parameter0}%
\end{equation}
and denoted it by EPVI$_{\mathbf{n}}$.

First we recall Hitchin's famous formula for the case $\mathbf{n}=\mathbf{0}$.
For any $(r,s)  \in \mathbb{C}^{2}\backslash \frac
{1}{2}\mathbb{Z}^{2}$, let $p_{r,s}^{\mathbf{0}}(\tau
)$ be defined by
\begin{equation}
\wp(p_{r,s}^{\mathbf{0}}(\tau);\tau):=\wp(r+s\tau;\tau)+\frac{\wp^{\prime
}(r+s\tau;\tau)}{2Z_{r,s}(\tau)  }. \label{513-1}%
\end{equation}
In \cite{Hit1} Hitchin proved the following remarkable result for EPVI$_{\mathbf{0}}$.

\begin{theorem} \cite{Hit1} \label{thm-Hitchin} For any $(
r,s)  \in \mathbb{C}^{2}\backslash \frac{1}{2}\mathbb{Z}^{2}$\textit{,
}$p_{r,s}^{\mathbf{0}}(\tau)$ given by (\ref{513-1}) is a solution to
EPVI$_{\mathbf{0}}$;
or equivalently, $\lambda_{r,s}^{\mathbf{0}}(t):=
\frac{\wp(p_{r,s}^{\mathbf{0}}(\tau);\tau)-e_{1}(  \tau)  }%
{e_{2}(\tau)-e_{1}(\tau)}$ via (\ref{513-1}) is a solution to PVI$_{\mathbf{0}}$.\end{theorem}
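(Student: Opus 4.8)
The plan is to verify Hitchin's formula by a direct $\tau$-differentiation, using the isomonodromy picture only as a guide for why the formula should hold and why the denominator $Z_{r,s}(\tau)$ is the right object. For $\mathbf n=\mathbf 0$ every $\alpha_k=\tfrac12(n_k+\tfrac12)^2=\tfrac18$, so by \eqref{124} the equation EPVI$_{\mathbf 0}$ reads $\ddot p=-\tfrac{1}{32\pi^2}\sum_{k=0}^{3}\wp'(p+\tfrac{\omega_k}{2};\tau)$, and the task is to show that $p=p_{r,s}^{\mathbf 0}(\tau)$, defined implicitly by $\wp(p)=\wp(a)+\wp'(a)/\bigl(2Z_{r,s}(\tau)\bigr)$ with $a:=r+s\tau$, satisfies this second-order ODE for each fixed $(r,s)\in\mathbb C^2\setminus\tfrac12\mathbb Z^2$.

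Conceptually I would first set up the auxiliary second-order Fuchsian equation on $E_\tau$,
\[y''=\Big[\tfrac34\big(\wp(z-p)+\wp(z+p)\big)+A\big(\zeta(z-p)-\zeta(z+p)\big)+B\Big]y,\]
whose potential is elliptic with double poles of coefficient $\tfrac34=\tfrac12(\tfrac12+1)$ at $z=\pm p$, giving local exponents $-\tfrac12,\tfrac32$ and local monodromy $-I_2$; the constant $A$ is fixed by the apparent-singularity (nonlogarithmic) condition. Isomonodromic deformation of this family forces the apparent singularity $p(\tau)$ to solve EPVI with $\alpha_k=\tfrac18$, and the explicit Hermite--Halphen/Baker--Akhiezer common eigenfunctions carry the diagonal monodromy \eqref{Mono-1} with data $(r,s)$; reading off the apparent-singularity position from that ansatz yields exactly the formula \eqref{513-1}. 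I would treat this as motivation and obtain the rigorous statement from the computation below.

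For the \emph{direct verification}, fix $(r,s)$, write $a=r+s\tau$ (so $\dot a=s$), and record that the three functions $Z:=Z_{r,s}(\tau)$, $\wp:=\wp(a;\tau)$, $\wp':=\wp'(a;\tau)$ satisfy a closed first-order system in $\tau$. This follows from the standard heat equations for the Weierstrass functions --- equivalently from $\partial_\tau\theta_1=\tfrac{1}{4\pi i}\partial_z^2\theta_1$ --- which express $\partial_\tau\wp$, $\partial_\tau\zeta$, $\tfrac{d\eta_1}{d\tau}$, $\tfrac{d\eta_2}{d\tau}$ through Weierstrass functions, together with the Legendre relation $\tau\eta_1-\eta_2=2\pi i$; the combination $Z=\zeta(a)-r\eta_1-s\eta_2$ is designed precisely so that $\dot Z,\dot\wp,\dot{\wp'}$ become polynomials in $Z,\wp,\wp',g_2,g_3$. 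Differentiating $\wp(p)=\wp+\wp'/(2Z)$ once gives $\wp'(p)\dot p$, and a second time gives $\wp'(p)\ddot p+\wp''(p)\dot p^2$; solving for $\ddot p$ and reducing with $\wp''=6\wp^2-\tfrac{g_2}{2}$ and $(\wp')^2=4\wp^3-g_2\wp-g_3$ expresses $\ddot p$ as a rational function of $Z,\wp,\wp'$. On the other side, the half-period addition formulas $\wp(p+\tfrac{\omega_k}{2})=e_k+\tfrac{(e_k-e_i)(e_k-e_j)}{\wp(p)-e_k}$ turn $\sum_{k}\wp'(p+\tfrac{\omega_k}{2})$ into a rational function of $\wp(p),\wp'(p)$, hence again, through \eqref{513-1}, into a rational function of $Z,\wp,\wp'$. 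Thus EPVI$_{\mathbf 0}$ becomes the identity of two explicit rational expressions, which I would confirm.

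The main obstacle is the bookkeeping in this double differentiation: one must pin down the heat-equation constants (the factors of $2\pi i$) in $\partial_\tau\wp$, $\partial_\tau\zeta$, $\dot\eta_1$, $\dot\eta_2$ without sign or normalization errors, and then carry out a sizeable but mechanical elliptic-function reduction in which the poles generated by the $1/Z$ denominator must cancel --- this cancellation is exactly why the specific function $Z_{r,s}(\tau)$ occurs in Hitchin's formula. Since all quantities are meromorphic in $\tau$ and in $(r,s)$, it suffices to check the resulting polynomial identity on the open dense set where $Z\neq 0$ and $(r,s)\notin\tfrac12\mathbb Z^2$, and then invoke analyticity to extend it to all of $\mathbb C^2\setminus\tfrac12\mathbb Z^2$; the equivalent statement for $\lambda_{r,s}^{\mathbf 0}(t)$ is immediate from the transformation \eqref{II-130}.
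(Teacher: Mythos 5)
Your plan has a genuine gap, and it sits exactly where the theorem lives. Note first that the paper itself offers no proof of Theorem \ref{thm-Hitchin}; it is quoted from Hitchin \cite{Hit1}, whose argument goes through isomonodromic deformation/twistor theory. So a direct-verification proof must be self-contained --- but yours ends with ``which I would confirm'': the ``identity of two explicit rational expressions'' that you promise to check \emph{is} the theorem, and deferring it means no proof has been given. Everything before that point (the GLE with apparent singularities $\pm p$, the exponents $-\tfrac12,\tfrac32$, the diagonal monodromy) is explicitly labelled as motivation, so nothing in the write-up actually establishes that $p_{r,s}^{\mathbf 0}$ solves EPVI$_{\mathbf 0}$.

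More seriously, the differentiation scheme as described would not even produce a closed rational identity, so the ``sizeable but mechanical'' reduction cannot be carried out in the form you set up. You write that differentiating $\wp(p)=\wp+\wp'/(2Z)$ once gives $\wp'(p)\dot p$, but the left side is $\wp(p(\tau);\tau)$ and its total derivative is $\wp'(p)\dot p+(\partial_\tau\wp)(p(\tau);\tau)$; by the heat equation \eqref{eq-deri-wp} the second term equals $\tfrac{-i}{4\pi}\bigl[2\bigl(\zeta(p)-p\,\eta_1\bigr)\wp'(p)+4\bigl(\wp(p)-\eta_1\bigr)\wp(p)-\tfrac23 g_2\bigr]$. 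This injects the quantity $u:=\zeta(p;\tau)-p\,\eta_1(\tau)$, which is \emph{not} a rational (or algebraic) function of $Z,\wp,\wp',\wp(p)$ --- it is a genuinely new transcendental unknown, and the same happens at the second differentiation through $\partial_\tau\wp'$. The computation only closes because all $u$-terms cancel: e.g.\ one has $\dot u=-(\wp(p)+\eta_1)\dot p+\tfrac{i}{4\pi}\bigl[\wp'(p)+2u(\wp(p)+\eta_1)\bigr]$, and combining this with the once-differentiated relation, which reads $\dot p-\tfrac{i}{2\pi}u=(\text{known})/\wp'(p)$, one can check that $u$ drops out of the resulting expression for $\ddot p$. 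This cancellation is a key structural step (it is where the specific normalization of $Z_{r,s}$ and the integrability enter), and a correct direct proof must state and verify it; your proposal does not acknowledge that these terms arise at all, so the claim that two differentiations express $\ddot p$ rationally in $Z,\wp,\wp'$ is unjustified as written. A smaller point in the same direction: $\wp'(p)=\pm\sqrt{4\wp(p)^3-g_2\wp(p)-g_3}$ carries a sign ambiguity that must be fixed consistently with the identification $p\sim -p$ of Remark \ref{identify} before the two sides of EPVI$_{\mathbf 0}$ can be compared.
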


It is known (cf. \cite[Section 5]{Chen-Kuo-Lin}) that solutions of EPVI$_{\mathbf{n}}$ could be
obtained from solutions of EPVI$_{\mathbf{0}}$ via the well-known Okamoto transformations
(\cite{Okamoto1}).
\medskip

\noindent{\bf Notation:} \emph{Denote by $p_{r,s}^{\mathbf{n}}(\tau)$ to be the solution of EPVI$_{\mathbf{n}}$ obtained from $p_{r,s}^{\mathbf{0}}(\tau)$
in Theorem \ref{thm-Hitchin} via the Okamoto transformations}.
\medskip

Then by applying Hitchin's formula (\ref{513-1}) and the Okamoto transformation, we proved in \cite[Remark 5.2]{Chen-Kuo-Lin} that

\begin{lemma} \cite[Remark 5.2]{Chen-Kuo-Lin} \label{lem-2.3} Given $\mathbf{n}$,
there is a rational function $\Xi_{\mathbf{n}}(\cdot,\cdot,\cdot,\cdot,\cdot,\cdot)$ of six independent variables with coefficients in $\mathbb{Q}$ such that for any $(r,s)\in\mathbb{C}^2\setminus\frac12\mathbb{Z}^2$, there holds
\[
\wp(p_{r,s}^{\mathbf{n}}(\tau);\tau)= \Xi_{\mathbf{n}}(Z_{r,s}(\tau),\wp(r+s\tau;\tau),\wp'(r+s\tau;\tau),
e_1(\tau),e_2(\tau),e_3(\tau)).
\]
\end{lemma}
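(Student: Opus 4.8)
The plan is to push Hitchin's explicit $\mathbf n=\mathbf 0$ solution through the finitely many Okamoto transformations that link EPVI$_{\mathbf 0}$ to EPVI$_{\mathbf n}$, checking that one never leaves the ring $\mathcal R$ of $\mathbb Q$-rational functions in the six quantities $Z_{r,s}(\tau)$, $\wp(r+s\tau;\tau)$, $\wp'(r+s\tau;\tau)$, $e_1,e_2,e_3$. By \eqref{513-1} the base case $\wp(p^{\mathbf 0}_{r,s};\tau)=\wp(r+s\tau)+\wp'(r+s\tau)/(2Z_{r,s})$ already lies in $\mathcal R$. I would work throughout in the Painlev\'e coordinates $t=\frac{e_3-e_1}{e_2-e_1}$ and $\lambda=\frac{\wp(p)-e_1}{e_2-e_1}$ of \eqref{II-130}: both lie in $\mathcal R$ and, in the grading where $Z_{r,s},\wp(u),\wp'(u),e_1,e_2,e_3$ carry weights $1,2,3,2,2,2$ (consistent with Theorem~\ref{thm-5A}), both are \emph{homogeneous of weight $0$}. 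Since the Okamoto transformations are explicit birational B\"acklund transformations expressing the new solution $\lambda^{\mathrm{new}}$ as a $\mathbb Q$-rational function of $\lambda$, its derivative $\lambda'=d\lambda/dt$, and $t$, the whole problem reduces to proving that $\lambda'$ again lies in $\mathcal R$.

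The engine will be a Serre/Ramanujan-type description of $\frac{1}{2\pi i}\frac{d}{d\tau}$ on $\mathcal R$. Using the classical formulas for $\partial_\tau\wp$, $\partial_\tau\zeta$ and $de_k/d\tau$, and rewriting the recurring combination $(\zeta(u)-\eta_1u)|_{u=r+s\tau}=Z_{r,s}-2\pi i\,s$ via the Legendre relation $\tau\eta_1-\eta_2=2\pi i$, I would establish for each generator $f$ of weight $k$ an identity
\[
2\pi i\,\frac{df}{d\tau}=\widetilde Df-k\,\eta_1 f,
\]
where $\widetilde Df\in\mathcal R$ is $\eta_1$-free of weight $k+2$; for instance $2\pi i\,\frac{d}{d\tau}\wp(u)=Z_{r,s}\wp'(u)+2\wp(u)^2-\tfrac{g_2}{3}-2\eta_1\wp(u)$, and analogously for $\wp'(u)$, $Z_{r,s}$ and the $e_k$. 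By the Leibniz rule this propagates to all of $\mathcal R$: for $F$ homogeneous of weight $w$ one obtains $2\pi i\,\frac{dF}{d\tau}=\widetilde DF-w\,\eta_1F$ with $\widetilde DF\in\mathcal R$ $\eta_1$-free of weight $w+2$.

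The crux is then immediate. Because $\lambda$ and $t$ have weight $0$, their $\eta_1$-terms vanish, $2\pi i\,\frac{d\lambda}{d\tau}=\widetilde D\lambda$ and $2\pi i\,\frac{dt}{d\tau}=\widetilde Dt$, both $\eta_1$-free; forming the ratio cancels the transcendental factor $2\pi i$ as well, so
\[
\lambda'=\frac{d\lambda}{dt}=\frac{d\lambda/d\tau}{dt/d\tau}=\frac{\widetilde D\lambda}{\widetilde Dt}\in\mathcal R
\]
is genuinely $\mathbb Q$-rational in the six variables, and is again of weight $0$. Hence the weight-$0$ part of $\mathcal R$ is closed under $d/dt$ and contains every $t$-derivative of $\lambda$. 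Feeding $\lambda,\lambda',\dots,t\in\mathcal R$ into the $\mathbb Q$-rational Okamoto transformation yields $\lambda^{\mathrm{new}}\in\mathcal R$, again of weight $0$. Composing the finitely many elementary transformations needed to move the parameters from $\mathbf 0$ to $\mathbf n$ keeps us in $\mathcal R$ at every stage, and finally $\wp(p^{\mathbf n}_{r,s};\tau)=e_1+(e_2-e_1)\lambda^{\mathbf n}\in\mathcal R$; this rational function is the desired $\Xi_{\mathbf n}$.

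The step I expect to be the real obstacle is controlling the quasi-period $\eta_1$ (equivalently the quasi-modular Eisenstein series $E_2$): it is transcendental over $\mathbb Q(e_1,e_2,e_3)$, so it does not belong to $\mathcal R$, and a careless differentiation inside the Okamoto transformation would introduce it and wreck the $\mathbb Q$-rationality. The weight bookkeeping above is exactly what neutralizes this, since the $\eta_1$-contribution to $2\pi i\,dF/d\tau$ is always $-(\mathrm{wt}\,F)\,\eta_1F$, which disappears precisely for the weight-$0$ objects $\lambda$ and $t$, after which the remaining $\frac{1}{2\pi i}$ cancels on passing from $d/d\tau$ to $d/dt$. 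Verifying the explicit $\mathbb Q$-coefficient, $\eta_1$-free formulas for $\widetilde D$ on the generators, and that the Okamoto transformations are $\mathbb Q$-rational in $(\lambda,\lambda',t)$, are the remaining routine (if lengthy) checks.
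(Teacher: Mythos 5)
Your proposal is correct and follows essentially the same route as the paper: Lemma \ref{lem-2.3} is obtained (via \cite[Remark 5.2]{Chen-Kuo-Lin}) precisely by starting from Hitchin's formula \eqref{513-1} for $\mathbf{n}=\mathbf{0}$ and pushing it through the Okamoto transformations to general $\mathbf{n}$, which is exactly your plan. Your Serre-derivative/weight bookkeeping that keeps $\eta_1$ from entering is a clean packaging of the rationality checks underlying that argument (compare the derivative formulas \eqref{fc-21}--\eqref{fc-22-1} used later in the paper).
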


For example, by writing \[Z=Z_{r,s}(\tau), \quad\wp=\wp(r+s\tau;\tau), \quad\wp'=\wp'(r+s\tau;\tau)\] for convenience, we have
\begin{equation}
\wp(p_{r,s}^{(1,0,0,0)}(\tau);\tau)=\wp+\frac{3\wp^{\prime}Z^{2}+\left(
12\wp^{2}-g_{2}\right)  Z+3\wp\wp^{\prime}}{2(Z^{3}-3\wp
Z-\wp^{\prime})}. \label{expression-1}%
\end{equation}
Note from \eqref{zrs12} that $Z_{r,s}^{(2,0,0,0)}(\tau)=Z^{3}-3\wp
Z-\wp^{\prime}$ appears in the denominator of $\wp(p_{r,s}^{(1,0,0,0)}(\tau);\tau)$. We will study the general relation between $Z_{r,s}^{\mathbf{n}}(\tau)$ and PVI in Section 3.2.

On the other hand, we proved in \cite{Chen-Kuo-Lin0} that  EPVI$_{\mathbf{n}}$  governs the isomonodromic deformation of the following generalized
Lam\'{e} equation
(denoted it by GLE$(\mathbf{n},p,A,\tau)$)
\begin{align} \label{89-1}
y''=\Big[
&\sum_{k=0}^{3}n_{k}(n_{k}+1)\wp(z+\tfrac{\omega_{k}}{2};\tau)+\frac{3}{4}
(\wp(z+p;\tau)+\wp(z-p;\tau))\\
&+A(\zeta(z+p;\tau)-\zeta(z-p;\tau))+B
\Big]  y=:I(z)y,\nonumber
\end{align}
where $\pm p\not \in E_{\tau}[2]$ and 
\begin{equation}
B=A^{2}-\zeta(2p;\tau)A-\tfrac{3}{4}\wp(2p;\tau)-\sum_{k=0}^{3}n_{k}(n_{k}+1)\wp(
p+\tfrac{\omega_{k}}{2};\tau).  \label{101}%
\end{equation}
Note that \eqref{101} is equivalent to saying that $\pm p\not \in E_{\tau}[2]$ are always
\emph{apparent singularities} (i.e. any solution of \eqref{89-1} has no logarithmic singularities at $\pm p$).

Fix any base point $q_{0}\in E_{\tau}$ that is not a singularity of \eqref{89-1}. The monodromy representation of GLE
(\ref{89-1}) is a homomorphism $\rho:\pi_{1}(  E_{\tau}\backslash
(\{  \pm p\}  \cup E_{\tau}[2]),q_{0})  \rightarrow
SL(2,\mathbb{C})$. Since $n_k\in \mathbb{N}$ and the local exponents
of (\ref{89-1}) at $\frac{\omega_k}{2}$ are $-n_k$ and $n_k+1$, the
local monodromy matrix at $\frac{\omega_k}{2}$ is the identity matrix $I_{2}$. Thus the
monodromy representation of (\ref{89-1}) is reduced to $\rho:\pi
_{1}(  E_{\tau}\backslash \{  \pm p\}  ,q_{0})
\rightarrow SL(2,\mathbb{C})$. Let $\gamma_{\pm}\in \pi_{1}(  E_{\tau
}\backslash(\{ \pm p\} \cup E_{\tau}[2]),q_{0})  $ be a simple
loop encircling $\pm p$ counterclockwise respectively, and $\ell_{j}\in \pi
_{1}(  E_{\tau}\backslash(\{ \pm p\} \cup E_{\tau}[2]),q_{0})$, $j=1,2$, be two fundamental cycles of $E_{\tau}$ connecting
$q_{0}$ with $q_{0}+\omega_{j}$ such that $\ell_{j}$ does not intersect with
$L+\Lambda_{\tau}$ (here $L$ is the straight segment connecting $\pm p$) and
satisfies%
\[
\gamma_{+}\gamma_{-}=\ell_{1}\ell_{2}\ell_{1}^{-1}\ell_{2}^{-1}\text{ in }%
\pi_{1}(  E_{\tau}\backslash  \{  \pm p \}
,q_{0})  .
\]
Since the local exponents of (\ref{89-1}) at $\pm p$ are $
\{-\frac{1}{2}, \frac{3}{2}\}$ and $\pm p\not \in E_{\tau}[2]$ are apparent singularities, we
always have
$\rho(\gamma_{\pm})=-I_{2}$.
Denote by $N_{j}=\rho(\ell_j)$ the monodromy matrix along the loop $\ell_{j}$ of
(\ref{89-1}) with respect to any linearly independent solutions. Then $N_1N_2=N_2N_1$ and the monodromy group of (\ref{89-1}) is
generated by $\{-I_{2},N_{1},N_{2}\}$, i.e. is always \emph{abelian and so reducible}.
It is known (cf. \cite{CKL1}) that expect finitely many $A$'s for given $(\tau, p)$,
$N_1$ and $N_2$ can be diagonalized simultaneously, and more precisely, there exists
$(r,s)\in\mathbb{C}^2\setminus\frac12\mathbb{Z}^2$ such that
\[N_{1}=%
\begin{pmatrix}
e^{-2\pi is} & 0\\
0 & e^{2\pi is}%
\end{pmatrix},\quad N_{2}=%
\begin{pmatrix}
e^{2\pi ir} & 0\\
0 & e^{-2\pi ir}%
\end{pmatrix}.\]

Let
$U$ be an open subset of $\mathbb{H}$ such that $p(\tau)\not \in E_{\tau}[2]$
for any $\tau \in U$. Then we proved in \cite{Chen-Kuo-Lin0} that
$p(\tau)$ \emph{is a solution of EPVI$_{\mathbf{n}}$ if and only
if there exist $A(\tau)$ (and the corresponding $B(\tau)$ via (\ref{101}))
 such that GLE$(\mathbf{n},p(\tau),A(\tau),\tau)$ is monodromy preserving
as $\tau \in U$ deforms}. Furthermore, $(p(\tau),A(\tau))$ satisfies the following new Hamiltonian system%
\begin{equation}
\left \{
\begin{array}
[c]{l}%
\frac{dp(\tau)}{d\tau}=\frac{\partial \mathcal{H}}{\partial A}=\frac{-i}{4\pi
}(2A-\zeta(2p;\tau)+2p\eta_{1}(\tau))\\
\frac{dA(\tau)}{d\tau}=-\frac{\partial \mathcal{H}}{\partial p}=\frac{i}{4\pi
}\left(
\begin{array}
[c]{l}%
(2\wp(2p;\tau)+2\eta_{1}(\tau))A-\frac{3}{2}\wp^{\prime}(2p;\tau)\\
-\sum_{k=0}^{3}n_{k}(n_{k}+1)\wp^{\prime}(p+\frac{\omega_{k}}{2};\tau)
\end{array}
\right)
\end{array}
\right.  . \label{142-0}%
\end{equation}
where
$
\mathcal{H}  =\frac{-i}{4\pi}(B+2p\eta_{1}(\tau)A)$. In other words, this Hamiltonian system is equivalent to EPVI$_{\mathbf{n}}$.
We refer the reader to \cite{Chen-Kuo-Lin0}
for the more general statement and the proof.
Here we need to apply the following result \cite{Chen-Kuo-Lin}.

\begin{theorem}(\cite[Theorem 5.3]{Chen-Kuo-Lin}) \label{thm-II-8}
For $\mathbf{n}$, let $p^{\mathbf{n}}(\tau)$ be a
solution to EPVI$_{\mathbf{n}}$. Then the following hold:

\begin{itemize}
\item[(1)]  For any $\tau$ satisfying $p^{\mathbf{n}}
(\tau)\not \in E_{\tau}[2]$, the monodromy group of the
associated GLE$(\mathbf{n}$, $ p^{\mathbf{n}}(\tau), A^{\mathbf{n}}(\tau), \tau)$ is generated by
\begin{equation}
\rho(\gamma_{\pm})=-I_{2},\text{ }N_{1}=%
\begin{pmatrix}
e^{-2\pi is} & 0\\
0 & e^{2\pi is}%
\end{pmatrix}
\text{, }N_{2}=%
\begin{pmatrix}
e^{2\pi ir} & 0\\
0 & e^{-2\pi ir}%
\end{pmatrix}
 \label{II-101}%
\end{equation}
if and
only if $(r,s)  \in \mathbb{C}^{2}\backslash \frac
{1}{2}\mathbb{Z}^{2}$ and $p^{\mathbf{n}}(\tau)=p_{r,s}^{\mathbf{n}
}(\tau)$ in the sense of Remark \ref{identify}.

\item[(2)] $\wp(p^{\mathbf{n}}_{r_{1},s_{1}}(\tau);\tau)\equiv \wp
(p_{r_{2},s_{2}}^{\mathbf{n}}(\tau);\tau)\Longleftrightarrow(r_{1},s_{1})  \equiv \pm(r_{2},s_{2})  \operatorname{mod}$
$\mathbb{Z}^{2}$.
\end{itemize}
\end{theorem}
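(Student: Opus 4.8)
The plan is to reduce everything to Hitchin's formula for the base case $\mathbf{n}=\mathbf{0}$ (Theorem \ref{thm-Hitchin}) and then transport the conclusion to general $\mathbf{n}$ through the Okamoto transformations, exploiting the fact that EPVI$_{\mathbf{n}}$ governs the isomonodromic deformation of GLE$(\mathbf{n},p,A,\tau)$. First I would record the isomonodromy principle: once $p^{\mathbf{n}}(\tau)$ is a fixed solution of EPVI$_{\mathbf{n}}$, the monodromy matrices $\rho(\gamma_\pm)=-I_2$, $N_1$, $N_2$ of the associated GLE are independent of $\tau$, away from the exceptional $\tau$ where $p^{\mathbf{n}}(\tau)\in E_\tau[2]$ or where $N_1,N_2$ fail to be simultaneously diagonalizable. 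Hence a well-defined pair $(r,s)\in\mathbb{C}^2\setminus\frac12\mathbb{Z}^2$, unique up to the equivalence $(r,s)\sim\pm(r,s)\bmod\mathbb{Z}^2$, is attached to each such solution, and the two assertions are statements about this attachment.

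For the base case $\mathbf{n}=\mathbf{0}$, I would invoke Hitchin's construction in its full form: the explicit solution $p_{r,s}^{\mathbf{0}}(\tau)$ of \eqref{513-1} carries monodromy data exactly $(r,s)$, and conversely every solution of EPVI$_{\mathbf{0}}$ arises this way, the monodromy determining $(r,s)$ up to $\sim$. This is the Riemann--Hilbert correspondence for PVI$_{\mathbf{0}}$: the Painlev\'e property makes the isomonodromy map from solutions to monodromy data well defined, and it is injective on the relevant stratum $\rho(\gamma_\pm)=-I_2$, $(r,s)\notin\frac12\mathbb{Z}^2$. The key transport step is that each Okamoto transformation, realized on the linear level as a Schlesinger transformation, shifts the local exponents at the half-periods $\tfrac{\omega_k}{2}$ by integers (changing some $n_k\mapsto n_k\pm1$) while keeping these points apparent; since the local monodromy at each $\tfrac{\omega_k}{2}$ stays $I_2$ and the transformation leaves the representation on the fundamental cycles $\ell_1,\ell_2$ untouched, the data $(r,s)$ is preserved. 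As $p_{r,s}^{\mathbf{n}}$ is by definition the Okamoto transform of $p_{r,s}^{\mathbf{0}}$, this yields the ``if'' half of $(1)$; the ``only if'' half follows by applying the inverse Okamoto transformation to any solution $p^{\mathbf{n}}$ with monodromy $(r,s)$, landing on $p_{r,s}^{\mathbf{0}}$ by the base-case classification, and transforming back.

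For part $(2)$, the implication $\Leftarrow$ I would verify directly from \eqref{513-1}: under $(r,s)\mapsto(r,s)+(m_1,m_2)$ one has $r+s\tau\mapsto r+s\tau+(m_1+m_2\tau)$, so $\wp(r+s\tau)$ and $\wp'(r+s\tau)$ are unchanged and, by the quasi-periodicity of $\zeta$, $Z_{r,s}(\tau)$ is unchanged as well; under $(r,s)\mapsto(-r,-s)$ one gets $(Z,\wp,\wp')\mapsto(-Z,\wp,-\wp')$, which again fixes $\wp(p_{r,s}^{\mathbf{0}})=\wp+\tfrac{\wp'}{2Z}$. Since Lemma \ref{lem-2.3} expresses $\wp(p_{r,s}^{\mathbf{n}})$ as a fixed rational function $\Xi_{\mathbf{n}}$ of $Z_{r,s},\wp,\wp',e_1,e_2,e_3$ and the Okamoto transformation is equivariant for this $\mathbb{Z}^2\rtimes\{\pm1\}$ symmetry, the invariance propagates to all $\mathbf{n}$. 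For $\Rightarrow$, if $\wp(p_{r_1,s_1}^{\mathbf{n}})\equiv\wp(p_{r_2,s_2}^{\mathbf{n}})$ then the two EPVI$_{\mathbf{n}}$ solutions coincide in the sense of Remark \ref{identify} (as $\wp$ is even and $\Lambda_\tau$-periodic), hence determine the same $A^{\mathbf{n}}(\tau)$ through the Hamiltonian system \eqref{142-0} and the same GLE; by part $(1)$ this single monodromy group is represented both by $(r_1,s_1)$ and by $(r_2,s_2)$, and simultaneous conjugacy of the diagonal pairs $N_1,N_2$ (which only permits swapping the two eigenlines) forces $(r_1,s_1)\equiv\pm(r_2,s_2)\bmod\mathbb{Z}^2$.

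The step I expect to be the main obstacle is the precise claim that the Okamoto transformation preserves $(r,s)$: one must track the Schlesinger transformation at the level of the linear ODE carefully enough to see that the global monodromy on $\ell_1,\ell_2$ is genuinely untouched while only the local exponents at the apparent singularities $\tfrac{\omega_k}{2}$ move, and one must check the $\{\pm1\}$-equivariance needed in part $(2)$. Hitchin's theorem and the injectivity of the Riemann--Hilbert map enter as black boxes, but verifying the Okamoto/monodromy compatibility for the specific DTV parameters \eqref{parameter0} is where the real work lies.
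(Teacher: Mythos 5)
You should know at the outset that the paper contains no proof of this statement: Theorem \ref{thm-II-8} is imported verbatim from \cite[Theorem 5.3]{Chen-Kuo-Lin}, so there is no internal argument to compare yours against. Judged on its own terms, your outline does follow the broad strategy of that cited work (settle the case $\mathbf{n}=\mathbf{0}$ via Hitchin, then transport through the Okamoto transformations), but both load-bearing steps of your plan are invoked as black boxes, and neither can be black-boxed from what this paper makes available.

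Concretely, there are two gaps. First, Theorem \ref{thm-Hitchin} as stated here says only that $p_{r,s}^{\mathbf{0}}(\tau)$ defined by \eqref{513-1} solves EPVI$_{\mathbf{0}}$; it carries no monodromy content whatsoever. The statement you call ``Hitchin's construction in its full form'' --- that the GLE$(\mathbf{0},p_{r,s}^{\mathbf{0}}(\tau),A(\tau),\tau)$ of \eqref{89-1} has monodromy exactly \eqref{II-101}, and that every solution on the relevant stratum arises this way --- is itself a substantial theorem, proved in \cite{Chen-Kuo-Lin0,Chen-Kuo-Lin} by constructing explicit Hermite--Halphen-type solutions (in the spirit of \eqref{yby}--\eqref{61.3512} in Section 2 of this paper), since Hitchin's original computation concerns a different linear problem. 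So your base case already assumes something essentially as hard as what is to be proved. Second, and more seriously, the assertion that the Okamoto transformation ``leaves the representation on the fundamental cycles $\ell_{1},\ell_{2}$ untouched'' is precisely the heart of the theorem, and you explicitly defer it. This preservation is not automatic: on the linear level the transformation is realized by a gauge factor that is not an elliptic function, and such factors can twist the monodromy along $\ell_{1},\ell_{2}$. The paper's own Lemma \ref{lem-3.4} is a cautionary example --- the innocuous half-period translation $p\mapsto p-\tfrac{\omega_{k}}{2}$, which also preserves the class of equations and keeps all singularities apparent, changes the data $(r,s)$ to $(r_{k},s_{k})$ of \eqref{fc90}, exactly because the factor $\xi_{k}(z)$ in that proof picks up signs along the two cycles. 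The bookkeeping you postpone is therefore where the theorem actually lives, and without it (and without the base-case monodromy identification) your argument is a plan rather than a proof. What does hold up is your treatment of part (2) granted part (1): the $\Leftarrow$ direction via the invariance of \eqref{513-1} and Lemma \ref{lem-2.3} under $(r,s)\mapsto(r,s)+(m_{1},m_{2})$ and $(r,s)\mapsto(-r,-s)$, and the $\Rightarrow$ direction via uniqueness (up to swapping eigenlines) of the simultaneous diagonalization of $N_{1},N_{2}$, are both sound.
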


\subsection{Relation between PVI and $Z_{r,s}^{\mathbf{n}}(\tau)$}
In this subsection, we study the deep connection between PVI and the pre-modular form $Z_{r,s}^{\mathbf{n}}(\tau)$.
For $\mathbf{n}=(n_0,n_1,n_2,n_3)$, we define \begin{equation}\label{n-0-0}\mathbf{n}_0^{\pm}:=(n_0\pm 1,n_1,n_2,n_3).\end{equation}

\begin{lemma}\cite[Lemma 3.1]{Chen-Kuo-Lin0}\label{lem-3.1}
Fix any $\tau_{0}\in \mathbb{H}$ and $c_{n_0}^{2}\in\{\pm i\frac{2n_{0}+1}{2\pi}\}$. Then for any $h\in \mathbb{C}$, EPVI$_{\mathbf{n}}$ has a solution $p_{h}^{\mathbf{n}}(\tau)$ satisfying the following asymptotic
behavior%
\begin{align} \label{515-5}
p_{h}^{\mathbf{n}}(\tau)=&c_{n_0}(\tau-\tau_{0})^{\frac{1}{2}}%
(1+h(\tau-\tau_{0})\\
&+a(\tau-\tau_{0})^{2}+O((\tau-\tau_0)^3))\text{ as }\tau
\rightarrow \tau_{0},\nonumber
\end{align}
with
\[a=\frac{-h^2}{2}-\frac{(n_0+\frac12)^2g_2(\tau_0)}{240\pi^2}-\frac{1}{24\pi^2}\sum_{k=1}^3
(n_k+\tfrac12)^2\wp''(\tfrac{\omega_k}{2};\tau_0).\]
Moreover, these two $1$-parameter families of solutions give all
solutions $p^{\mathbf{n}}(\tau)$ of EPVI$_{\mathbf{n}}$ satisfying
$p^{\mathbf{n}}(\tau_{0})=0$.
\end{lemma}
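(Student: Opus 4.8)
The plan is to study EPVI$_{\mathbf{n}}$, i.e. \eqref{124} with $\alpha_k=\tfrac12(n_k+\tfrac12)^2$, locally near a point $\tau_0$ at which a solution vanishes, and to exploit the pole of $\wp'$ at the origin. Set $s=\tau-\tau_0$ and let $'$ denote $\tfrac{d}{d\tau}$. Recall that $\wp'(z;\tau)=-2z^{-3}+\tfrac{g_2}{10}z+O(z^3)$ is odd, while for $k=1,2,3$ the function $\wp(\tfrac{\omega_k}{2}+z;\tau)$ is \emph{even} in $z$ (as $\omega_k$ is a period and $\wp$ is even), so $\wp'(\tfrac{\omega_k}{2}+p;\tau)=\wp''(\tfrac{\omega_k}{2};\tau)\,p+O(p^3)$ vanishes at $p=0$. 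Hence as $p\to0$ the right-hand side of \eqref{124} is dominated by $\tfrac{-1}{4\pi^2}\alpha_0(-2p^{-3})=\tfrac{(n_0+1/2)^2}{4\pi^2}p^{-3}$, and the dominant balance $p''\sim\tfrac{(n_0+1/2)^2}{4\pi^2}p^{-3}$ forces a leading Puiseux term $p\sim c_{n_0}s^{1/2}$ with $c_{n_0}^4=-\tfrac{(2n_0+1)^2}{4\pi^2}$, that is $c_{n_0}^2\in\{\pm i\tfrac{2n_0+1}{2\pi}\}$. This already isolates the two admissible values of $c_{n_0}^2$.

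Next I would remove the branch point by the substitution $q:=p^2$. Using $p''=\tfrac{q''}{2}q^{-1/2}-\tfrac{(q')^2}{4}q^{-3/2}$ and multiplying \eqref{124} by $q^{3/2}=p^3$, the parities noted above make every half-integer power cancel, yielding a genuinely analytic equation
\[\tfrac12 q\,q''-\tfrac14(q')^2=F(q,\tau),\qquad F(q,\tau)=\tfrac{\alpha_0}{2\pi^2}+F_2(\tau)\,q^2+O(q^3),\]
where $F$ is holomorphic in $(q,s)$ near $(0,0)$ and $F_2(\tau)=\tfrac{-1}{4\pi^2}\bigl(\tfrac{\alpha_0}{10}g_2(\tau)+\sum_{k=1}^3\alpha_k\wp''(\tfrac{\omega_k}{2};\tau)\bigr)$. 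The two features that will produce the stated constant $a$ are visible here: the leading term $\tfrac{\alpha_0}{2\pi^2}$ is $\tau$-independent, and there is no linear-in-$q$ term.

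Then I would solve this analytic equation by a power series $q(s)=\sum_{j\ge1}q_j s^j$. The order-$s^0$ relation $-\tfrac14 q_1^2=\tfrac{\alpha_0}{2\pi^2}$ is the indicial equation, giving $q_1=c_{n_0}^2$; the order-$s^1$ relation holds identically, which is the resonance leaving $q_2$ free, and writing $q_2=2q_1h$ introduces the parameter $h$; the order-$s^2$ relation $\tfrac32 q_1q_3=F_2(\tau_0)q_1^2$ determines $q_3$, and the higher $q_j$ are then forced recursively. Comparing $q=c_{n_0}^2\,s\,(1+hs+as^2+\cdots)^2$ gives $q_3/q_1=h^2+2a$, hence $a=\tfrac13F_2(\tau_0)-\tfrac{h^2}{2}$, which unwinds to exactly the asserted expression for $a$.

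The hard part will be the convergence of this formal series, since the reduced equation is singular (the coefficient of $q''$ degenerates at $s=0$) and resonant at $j=2$. I would treat this by fixing one root $q_1$ and recasting the equation near $s=0$ as a Briot--Bouquet type system: the resonance at $j=2$ is unobstructed (the order-$s^1$ relation is an identity, so no logarithmic term is forced), and a standard majorant argument then yields, for each $h\in\mathbb{C}$, a holomorphic $q(s)$ with the prescribed $q_1,q_2$, so that $p=q^{1/2}$ is a genuine solution of EPVI$_{\mathbf{n}}$ with the expansion \eqref{515-5}. For the completeness statement I would argue conversely: for any solution with $p(\tau_0)=0$, the function $q:=p^2$ is single-valued near $\tau_0$ (the two preimages $\pm p$ of $\wp(p(\tau);\tau)$ share the same square) and holomorphic there because $\wp(p(\tau);\tau)$ is single-valued meromorphic by the Painlev\'e property; it solves the analytic equation above, and its order-$s^0$ relation $-\tfrac14 q'(\tau_0)^2=\tfrac{\alpha_0}{2\pi^2}$ forces a simple zero with $q'(\tau_0)=c_{n_0}^2$. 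Thus $q$ coincides with one of the constructed series, and since $\pm p$ are identified (Remark \ref{identify}) while $c_{n_0}^2$ takes the two values $\pm i\tfrac{2n_0+1}{2\pi}$, these solutions assemble into precisely the two $1$-parameter families claimed.
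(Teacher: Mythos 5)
This lemma is not proved anywhere in the paper: it is imported verbatim, with citation, from \cite[Lemma 3.1]{Chen-Kuo-Lin0}, so there is no in-paper argument to compare yours against, and your proposal is necessarily an independent proof. Having checked it, I find it correct, and it is a genuinely self-contained route (the original reference works through the correspondence with movable poles of PVI; you work directly in the elliptic form). The substitution $q=p^2$ is exactly the right desingularization: because $\wp'(z;\tau)$ is odd at $z=0$ and $\wp'(z+\tfrac{\omega_k}{2};\tau)$ is odd at $z=0$ for $k=1,2,3$, multiplying \eqref{124} by $p^3$ does yield $\tfrac12 qq''-\tfrac14 (q')^2=F(q,\tau)$ with $F$ holomorphic near $(0,\tau_0)$, constant term $\alpha_0/(2\pi^2)$ independent of $\tau$, and no $q$-linear term. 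Your recursion checks out in detail: the order-$s^0$ relation gives $q_1^2=-2\alpha_0/\pi^2$, i.e. $q_1=c_{n_0}^2\in\{\pm i\tfrac{2n_0+1}{2\pi}\}$; the order-$s^1$ relation is $0=0$ on both sides (on the right precisely because of the two structural features you isolate), which is the unobstructed resonance freeing $q_2=2q_1h$; at order $s^2$ the $q_2^2$ contributions cancel, leaving $\tfrac32 q_1q_3=F_2(\tau_0)q_1^2$, and combining with $q_3=q_1(h^2+2a)$ reproduces exactly the stated $a$ (I verified the constants: $\tfrac13 F_2(\tau_0)$ with $\alpha_k=\tfrac12(n_k+\tfrac12)^2$ gives the $\tfrac{1}{240\pi^2}$ and $\tfrac{1}{24\pi^2}$ coefficients); and for $j\geq2$ the coefficient of $q_{j+1}$ is $\tfrac{(j+1)(j-1)}{2}q_1\neq0$, so all higher coefficients are forced. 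The completeness step is also sound: for any solution with $p(\tau_0)=0$, inverting the even series $\wp(p;\tau)=q^{-1}+\tfrac{g_2}{20}q+\cdots$ exhibits $q=p^2$ as a holomorphic function of $1/\wp(p(\tau);\tau)$, hence single-valued and holomorphic near $\tau_0$ by the Painlev\'e property, and the equation at $\tau_0$ rules out a higher-order zero (it would make the left side vanish while $F(0,\tau_0)\neq0$), forcing $q'(\tau_0)=c_{n_0}^2$ and identifying $q$ with one of your series. The only part left at sketch level is convergence of the formal series; your plan does close (the coefficient $\tfrac{(j+1)(j-1)}{2}q_1$ of $q_{j+1}$ grows like $j^2$, so a majorant induction succeeds), but those few lines should be written out in a final version.
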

Recall Remark \ref{identify} that we identify the solutions $p_{h}^{\mathbf{n}}(\tau)$ and $-p_{h}^{\mathbf{n}}(\tau)$, so (\ref{515-5}) gives
two $1$-parameter families (one family is given by $c_{n_0}^{2}= i\frac{2n_{0}+1}{2\pi}$ and the other by $c_{n_0}^{2}= -i\frac{2n_{0}+1}{2\pi}$) of solutions of EPVI$_{\mathbf{n}}$.

The first result of this subsection is
\begin{theorem} \label{thm-pole-z} Let $(r,s)\in\mathbb{C}^2\setminus\frac12\mathbb{Z}^2$ and $p_{r,s}^{\mathbf{n}
}(\tau)$ be a solution of EPVI$_{\mathbf{n}}$.
Suppose $p^{\mathbf{n}}_{r,s}(\tau_{0})=0$ for some $\tau_{0}\in \mathbb{H}$.
Then $p_{r,s}^{\mathbf{n}}(\tau)= p_{h}^{\mathbf{n}}(\tau)$ for
some $h\in \mathbb{C}$. Furthermore, by defining
\begin{equation}
B_{0}:=2\pi ic_{n_0}^{2}\left(  4\pi i h-\eta_{1}(\tau_{0})\right)
-\sum_{j=1}^{3}n_{j}(n_{j}+1)e_{j}(\tau_{0}), \label{kB}%
\end{equation}
the following hold.
\begin{itemize}
\item[(1)]If
$c_{n_0}^{2}=-i\frac{2n_{0}+1}{2\pi}$, then $Z_{r,s}^{\mathbf{n}_0^{+}}(\tau_0)=0$ and the monodromy of H$(\mathbf{n}_0^{+},B_{0}, \tau_0)$ is generated by
\begin{equation}
\rho(\ell_{1})=%
\begin{pmatrix}
e^{-2\pi is} & 0\\
0 & e^{2\pi is}%
\end{pmatrix}
,\text{ \  \  \ }\rho(\ell_{2})=%
\begin{pmatrix}
e^{2\pi ir} & 0\\
0 & e^{-2\pi ir}%
\end{pmatrix}.
\label{Mono0-1}%
\end{equation}
\item[(2)] If $c_{n_0}^{2}=i\frac{2n_{0}+1}{2\pi}$, then $Z_{r,s}^{\mathbf{n}_0^{-}}(\tau_0)=0$ and the monodromy of H$(\mathbf{n}_0^{-},B_{0}, \tau_0)$ is generated by (\ref{Mono0-1}).
\end{itemize}
\end{theorem}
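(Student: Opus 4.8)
The plan is to realize $\mathrm{H}(\mathbf{n}_0^{\pm},B_0,\tau_0)$ as the \emph{confluent limit}, at $\tau=\tau_0$, of the monodromy-preserving family $\mathrm{GLE}(\mathbf{n},p_{r,s}^{\mathbf{n}}(\tau),A(\tau),\tau)$ from \eqref{89-1}, and then read off the vanishing of the pre-modular form via Theorem \ref{thm-premodular-1}. First, since $p_{r,s}^{\mathbf{n}}(\tau)$ solves $\mathrm{EPVI}_{\mathbf{n}}$ and $p_{r,s}^{\mathbf{n}}(\tau_0)=0$, Lemma \ref{lem-3.1} — which lists \emph{all} solutions of $\mathrm{EPVI}_{\mathbf{n}}$ vanishing at $\tau_0$ — immediately gives $p_{r,s}^{\mathbf{n}}(\tau)=p_h^{\mathbf{n}}(\tau)$ for some $h\in\mathbb{C}$ and one of the two branches $c_{n_0}^2=\pm i\frac{2n_0+1}{2\pi}$. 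This settles the first assertion and fixes the asymptotic expansion \eqref{515-5} that drives everything else.

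Next I would extract the asymptotics of $A$. Along this solution the pair $(p(\tau),A(\tau))$ obeys the Hamiltonian system \eqref{142-0}. Feeding $p(\tau)=c_{n_0}(\tau-\tau_0)^{1/2}(1+h(\tau-\tau_0)+\cdots)$ into the first equation and using $\zeta(2p)=\frac{1}{2p}+O(p^3)$, I solve for $A(\tau)$ order by order in $p$, obtaining a Laurent expansion whose leading term is $A(\tau)=\big(\tfrac14+\pi i c_{n_0}^2\big)p^{-1}+O(1)$, with the subleading coefficients governed by $h$ and by the constant $a$ of Lemma \ref{lem-3.1}. Together with \eqref{101} this also produces the expansion of $B(\tau)$.

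The analytic heart is the confluence as $\tau\to\tau_0$, when the apparent pair $\pm p(\tau)$ collides with the lattice point $0=\tfrac{\omega_0}{2}$ in \eqref{89-1}. Writing $z=p\xi$ one sees the three colliding singularities organize into an inner equation on $\mathbb{P}^1$ with singular points $\xi=0,\pm1,\infty$; matching its behaviour at $\xi=\infty$ to the outer limit shows that the coefficient of $\wp(z)$ at the origin passes from $n_0(n_0+1)$ to $n_0(n_0+1)+\tfrac32-2\big(\tfrac14+\pi i c_{n_0}^2\big)=n_0(n_0+1)+1-2\pi i c_{n_0}^2$, which equals $(n_0\pm1)(n_0\pm1+1)$ according to the sign of $c_{n_0}^2$; this is precisely the passage $\mathbf{n}\mapsto\mathbf{n}_0^{\pm}$ of cases (1)--(2). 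The genuine work is to show that all of the $O(p^{-2})$ and $O(p^{-1})$ singular contributions — coming from $\tfrac34(\wp(z\pm p))$, the $A$-term, and $B$ via \eqref{101} — cancel, so that $I(z)$ converges uniformly on compact subsets of $E_{\tau_0}\setminus E_{\tau_0}[2]$ to $I_{\mathbf{n}_0^{\pm}}(z;\tau_0)+B_0$, and to pin down the finite remainder as exactly \eqref{kB}; the factors $\eta_1(\tau_0)$ and $4\pi i h$ in $B_0$ enter here through the $h$-dependent subleading term of $A$ and the Legendre relation. This singular-part cancellation and finite-part bookkeeping is where I expect the main difficulty to lie.

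Finally I transport the monodromy to the limit. By \cite{Chen-Kuo-Lin0} the family is monodromy preserving, and by Theorem \ref{thm-II-8}(1) its monodromy for $\tau$ near but distinct from $\tau_0$ is generated by $\rho(\gamma_\pm)=-I_2$ together with the diagonal $N_1,N_2$ carrying data $(r,s)$. Since $\rho(\gamma_+)\rho(\gamma_-)=I_2$, the two half-integer singularities annihilate each other in the limit and the merged point $z=0$ becomes an ordinary (apparent) singularity of $\mathrm{H}(\mathbf{n}_0^{\pm},B_0,\tau_0)$; by continuity of the monodromy under the controlled confluence above, the limiting equation still has $N_1,N_2$ as generators, i.e. $(r,s)$ is its monodromy data, which is \eqref{Mono0-1}. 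As $(r,s)\notin\frac12\mathbb{Z}^2$ this places us in Case (a), so $Q_{\mathbf{n}_0^{\pm}}(B_0;\tau_0)\neq0$, and Theorem \ref{thm-premodular-1} then yields $Z_{r,s}^{\mathbf{n}_0^{\pm}}(\tau_0)=0$. The two places I would expect to spend the most effort are the singular-part cancellation of the confluence step and the rigorous justification that the monodromy passes to this confluent limit.
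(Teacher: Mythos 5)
Your proposal has the same architecture as the paper's proof: Lemma \ref{lem-3.1} identifies $p_{r,s}^{\mathbf{n}}=p_h^{\mathbf{n}}$; then the isomonodromic family GLE$(\mathbf{n},p_{r,s}^{\mathbf{n}}(\tau),A(\tau),\tau)$ is made to converge, as $\tau\to\tau_0$, to H$(\mathbf{n}_0^{\pm},B_0,\tau_0)$; the monodromy \eqref{II-101} is transported to the limit; and Theorem \ref{thm-premodular-1} converts this into $Z_{r,s}^{\mathbf{n}_0^{\pm}}(\tau_0)=0$. The difference is that the paper does not carry out the two analytic steps at all: it quotes \cite[Theorem 3.1]{Chen-Kuo-Lin0} for the confluence (including the formula \eqref{kB} and, crucially, \emph{which} sign of $c_{n_0}^2$ produces $\mathbf{n}_0^{+}$ versus $\mathbf{n}_0^{-}$), and \cite[Theorem 6.2]{Chen-Kuo-Lin} for the passage of the monodromy to the confluent limit. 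You propose to prove both from scratch, and these are exactly the two steps you acknowledge you have not done; so, as written, the proposal is a plan whose two analytic pillars are open. In particular, ``by continuity of the monodromy under the controlled confluence'' is an assertion, not an argument: two apparent singularities with $\rho(\gamma_{\pm})=-I_2$ merge into a lattice point and the fundamental group changes, so one must show that a fundamental system normalized at the base point $q_0$, together with its continuations along the fixed cycles $\ell_1,\ell_2$, converges locally uniformly; only then does isomonodromy (the matrices $N_1,N_2$ are constant in $\tau$) force the limit equation to have exactly the monodromy \eqref{Mono0-1}. That is precisely the content of the cited \cite[Theorem 6.2]{Chen-Kuo-Lin}, and it must be either cited or proved.

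There is also a concrete discrepancy that your write-up conceals rather than resolves: your matching formula produces the \emph{opposite} pairing of signs with $\mathbf{n}_0^{\pm}$ from the one asserted in the theorem. From \eqref{142-0} you correctly obtain $A=(\tfrac14+\pi i c_{n_0}^2)p^{-1}+O(1)$, hence your limiting coefficient $n_0(n_0+1)+1-2\pi i c_{n_0}^2$ at $z=0$. In case (1) of the theorem, $c_{n_0}^2=-i\tfrac{2n_0+1}{2\pi}$ gives $2\pi i c_{n_0}^2=2n_0+1$, so your coefficient is $n_0(n_0+1)-2n_0=(n_0-1)n_0$, i.e.\ the limit would be H$(\mathbf{n}_0^{-},B_0,\tau_0)$ --- whereas case (1) asserts $\mathbf{n}_0^{+}$ (coefficient $(n_0+1)(n_0+2)$). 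Writing ``$(n_0\pm1)(n_0\pm1+1)$ according to the sign of $c_{n_0}^2$'' hides exactly the content that distinguishes cases (1) and (2). The discrepancy is not obviously an arithmetic slip on your side: your expansion reproduces \eqref{kB} on the nose, and in the base case $\mathbf{n}=\mathbf{0}$ Hitchin's formula \eqref{513-1} combined with \eqref{fc-21} shows that at a zero $\tau_0$ of $Z_{r,s}$ with $r+s\tau_0\notin\Lambda_{\tau_0}$ one has $\wp(p^{\mathbf{0}}_{r,s}(\tau);\tau)\sim -2\pi i/(\tau-\tau_0)$, i.e.\ $c_0^2=+\tfrac{i}{2\pi}$, paired with the vanishing of $Z_{r,s}^{(1,0,0,0)}=Z_{r,s}$ (see \eqref{zrs12}), again the $+$ sign going with $\mathbf{n}_0^{+}$. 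So either the transcription of the Hamiltonian system \eqref{142-0} here uses a sign convention different from that of \cite{Chen-Kuo-Lin0} (whose Theorem 3.1 the paper quotes with the opposite pairing), or the labels in the statement are interchanged; a correct proof must detect and settle this, and your proposal, by leaving the sign assignment implicit, proves at best a statement in which cases (1) and (2) may be swapped.
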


\begin{proof}
The assertion $p_{r,s}^{\mathbf{n}}(\tau)= p_{h}^{\mathbf{n}}(\tau)$ for
some $h\in \mathbb{C}$ follows from Lemma \ref{lem-3.1}.
Furthermore, letting $B_0$ be given by \eqref{kB}, we proved in \cite[Theorem 3.1]{Chen-Kuo-Lin0} that
\begin{itemize}
\item[(1)] if $c_{n_0}%
^{2}=-i\frac{2n_{0}+1}{2\pi}$, then as $\tau\to \tau_0$, 
\begin{align*}
&\sum_{k=0}^{3}n_{k}(n_{k}+1)\wp(z+\tfrac{\omega_{k}}{2};\tau)+\tfrac{3}{4}
(\wp(z+p_{r,s}^{\mathbf{n}}(\tau);\tau)+\wp(z-p_{r,s}^{\mathbf{n}}(\tau);\tau))\\
&+A(\tau)(\zeta(z+p_{r,s}^{\mathbf{n}}(\tau);\tau)-\zeta(z-p_{r,s}^{\mathbf{n}}(\tau);\tau))+B(\tau)\\
&\to (n_0+1)(n_0+2)\wp(z;\tau_0)+\sum_{k=1}^{3}n_{k}(n_{k}+1)\wp(z+\tfrac{\omega_{k}}{2};\tau_0)+B_0,
\end{align*}
so the associated
GLE$(\mathbf{n}, p_{r,s}^{\mathbf{n}}(\tau), A(\tau), \tau)$
converges to H$(\mathbf{n}_0^{+},B_{0}, \tau_0)$;
\item[(2)] if $c_{n_0}%
^{2}=i\frac{2n_{0}+1}{2\pi}$, then as $\tau\to \tau_0$, 
\begin{align*}
&\sum_{k=0}^{3}n_{k}(n_{k}+1)\wp(z+\tfrac{\omega_{k}}{2};\tau)+\tfrac{3}{4}
(\wp(z+p_{r,s}^{\mathbf{n}}(\tau);\tau)+\wp(z-p_{r,s}^{\mathbf{n}}(\tau);\tau))\\
&+A(\tau)(\zeta(z+p_{r,s}^{\mathbf{n}}(\tau);\tau)-\zeta(z-p_{r,s}^{\mathbf{n}}(\tau);\tau))+B(\tau)\\
&\to (n_0-1)n_0\wp(z;\tau_0)+\sum_{k=1}^{3}n_{k}(n_{k}+1)\wp(z+\tfrac{\omega_{k}}{2};\tau_0)+B_0,
\end{align*}
so the associated
GLE$(\mathbf{n}, p_{r,s}^{\mathbf{n}}(\tau), A(\tau), \tau)$
converges to H$(\mathbf{n}_0^{-},B_{0}, \tau_0)$.
\end{itemize}
Recalling Theorem \ref{thm-II-8} that the monodromy of this
GLE$(\mathbf{n}, p_{r,s}^{\mathbf{n}}(\tau), A(\tau), \tau)$ is given by (\ref{II-101}), it follows from \cite[Theorem 6.2]{Chen-Kuo-Lin} that the monodromy of H$(\mathbf{n}_0^{+},B_{0}, \tau_0)$ for $c_{n_0}%
^{2}=-i\frac{2n_{0}+1}{2\pi}$ (resp. H$(\mathbf{n}_0^{-},B_{0}, \tau_0)$ for
$c_{n_0}^{2}=i\frac{2n_{0}+1}{2\pi}$) is given by (\ref{Mono0-1}). This, together with Theorem \ref{thm-premodular-1}, implies that $Z_{r,s}^{\mathbf{n}_0^{+}}(\tau_0)=0$ for $c_{n_0}
^{2}=-i\frac{2n_{0}+1}{2\pi}$ (resp. $Z_{r,s}^{\mathbf{n}_0^{-}}(\tau_0)=0$ for
$c_{n_0}^{2}=i\frac{2n_{0}+1}{2\pi}$).
\end{proof}

The converse statement of Theorem \ref{thm-pole-z} is also true, which  strongly suggests that $Z_{r,s}^{\mathbf{n}}(\tau)$ appears in the denominators of the expressions of both $\wp(p_{r,s}^{\mathbf{n}_0^+}(\tau);\tau)$ and $\wp(p_{r,s}^{\mathbf{n}_0^-}(\tau);\tau)$.

\begin{theorem} \label{thm-pole-z1} Let $(r,s)\in\mathbb{C}^2\setminus\frac12\mathbb{Z}^2$ such that $Z_{r,s}^{\mathbf{n}}(\tau_0)=0$ for some $\tau_{0}\in \mathbb{H}$, namely there is a unique $B_0\in\mathbb{C}$ such that the monodromy of H$(\mathbf{n},B_{0}, \tau_0)$ is generated by (\ref{Mono0-1}).
Then
\begin{itemize}
\item[(1)] $p_{r,s}^{\mathbf{n}_0^+}(\tau_0)=0$ and $p_{r,s}^{\mathbf{n}_0^+}(\tau)=p_{h}^{\mathbf{n}_0^+}(\tau)$ with $c_{n_0+1}^2=i\frac{2n_{0}+3}{2\pi}$ and $h$ satisfying
    \begin{equation}
B_{0}=2\pi ic_{{n}_0+1}^{2}\left(  4\pi i h-\eta_{1}(\tau_{0})\right)
-\sum_{j=1}^{3}n_{j}(n_{j}+1)e_{j}(\tau_{0}). \label{B1}%
\end{equation}

\item[(2)] $p_{r,s}^{\mathbf{n}_0^-}(\tau_0)=0$ and $p_{r,s}^{\mathbf{n}_0^-}(\tau)=p_{h}^{\mathbf{n}_0^-}(\tau)$ with $c_{n_0-1}^2=-i\frac{2n_{0}-1}{2\pi}$ and $h$ satisfying
    \begin{equation}
B_{0}=2\pi ic_{{n}_0-1}^{2}\left(  4\pi i h-\eta_{1}(\tau_{0})\right)
-\sum_{j=1}^{3}n_{j}(n_{j}+1)e_{j}(\tau_{0}). \label{B2}%
\end{equation}
\end{itemize}
\end{theorem}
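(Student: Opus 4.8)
The plan is to run Theorem~\ref{thm-pole-z} in reverse. That theorem sends a solution of EPVI$_{\mathbf{m}}$ vanishing at $\tau_0$ to a zero of $Z_{r,s}^{\mathbf{m}_0^\pm}$ with an explicit $B_0$, and the link \eqref{kB} between the leading coefficient $h$ of the solution and $B_0$ is an \emph{invertible} affine map. Taking $\mathbf{m}=\mathbf{n}_0^+$, so that $\mathbf{m}_0^-=\mathbf{n}$, it is exactly case~(2) of Theorem~\ref{thm-pole-z}, with $c_{m_0}^2=c_{n_0+1}^2=i\frac{2n_0+3}{2\pi}$, that degenerates back to $\mathbf{n}$; for part~(2) one instead takes $\mathbf{m}=\mathbf{n}_0^-$, so $\mathbf{m}_0^+=\mathbf{n}$, and uses case~(1) with $c_{n_0-1}^2=-i\frac{2n_0-1}{2\pi}$. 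I will carry out part~(1); part~(2) is identical after this relabelling.

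Given $(r,s)\notin\tfrac12\mathbb{Z}^2$ with $Z_{r,s}^{\mathbf{n}}(\tau_0)=0$, Theorem~\ref{thm-premodular-1} yields a $B_0$ for which H$(\mathbf{n},B_0,\tau_0)$ has monodromy data $(r,s)$; thus $Q_{\mathbf{n}}(B_0;\tau_0)\neq0$, and by \eqref{fc-glo-uni} this $B_0$ is the unique such value. Since the coefficient of $h$ in \eqref{B1} is $2\pi i c_{n_0+1}^2\cdot(4\pi i)\neq0$, I may solve \eqref{B1} for a unique $h$, and feeding this $h$ together with $c_{n_0+1}^2=i\frac{2n_0+3}{2\pi}$ into Lemma~\ref{lem-3.1} (applied to $\mathbf{n}_0^+$) produces a solution $p_h^{\mathbf{n}_0^+}(\tau)$ of EPVI$_{\mathbf{n}_0^+}$ with $p_h^{\mathbf{n}_0^+}(\tau_0)=0$. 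I now re-run the degeneration from the proof of Theorem~\ref{thm-pole-z}: as $\tau\to\tau_0$ the associated family GLE$(\mathbf{n}_0^+,p_h^{\mathbf{n}_0^+}(\tau),A(\tau),\tau)$ converges to H$(\mathbf{n},B_0,\tau_0)$, the limiting constant being our $B_0$ precisely because it is computed from the same $h$ by the same formula \eqref{B1}. Since the monodromy is preserved under this isomonodromic degeneration, the $\tau$-independent monodromy of the GLE coincides with that of the limit H$(\mathbf{n},B_0,\tau_0)$; and as $Q_{\mathbf{n}}(B_0;\tau_0)\neq0$ the latter is completely reducible with data $(r,s)$. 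Hence $p_h^{\mathbf{n}_0^+}$ has completely reducible monodromy, with data $(r',s')\notin\tfrac12\mathbb{Z}^2$ satisfying $(r',s')\sim(r,s)$.

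Theorem~\ref{thm-II-8}(1) now gives $p_h^{\mathbf{n}_0^+}=p_{r',s'}^{\mathbf{n}_0^+}$, and Theorem~\ref{thm-II-8}(2) together with Remark~\ref{identify} gives $p_{r',s'}^{\mathbf{n}_0^+}=p_{r,s}^{\mathbf{n}_0^+}$. Therefore $p_{r,s}^{\mathbf{n}_0^+}=p_h^{\mathbf{n}_0^+}$, which is exactly part~(1): $p_{r,s}^{\mathbf{n}_0^+}(\tau_0)=0$, with the stated $c_{n_0+1}^2=i\frac{2n_0+3}{2\pi}$ and $h$ solving \eqref{B1}. Part~(2) is obtained verbatim with $\mathbf{m}=\mathbf{n}_0^-$, case~(1), $c_{n_0-1}^2=-i\frac{2n_0-1}{2\pi}$ and \eqref{B2} in place of \eqref{B1}; the degenerate instance $\mathbf{n}_0^-=\mathbf{0}$ (when $\mathbf{n}=(1,0,0,0)$) is harmless, since EPVI$_{\mathbf{0}}$ is Hitchin's equation and both Lemma~\ref{lem-3.1} and Theorem~\ref{thm-pole-z} apply to it.

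The delicate point is the inference in the second paragraph from $Q_{\mathbf{n}}(B_0;\tau_0)\neq0$ to $(r',s')\notin\tfrac12\mathbb{Z}^2$: this is what lets us treat $p_h^{\mathbf{n}_0^+}$ as a completely reducible solution $p_{r',s'}^{\mathbf{n}_0^+}$ and gives meaning to $(r',s')\sim(r,s)$. It rests entirely on the non-vanishing of the spectral polynomial at the distinguished $B_0$ together with the preservation of monodromy under the GLE$\to$H degeneration; were $B_0$ a root of $Q_{\mathbf{n}}$ one could a priori fall into the non-completely-reducible case \eqref{Mono-21}. Everything else is the bijectivity of the affine relation \eqref{B1}/\eqref{B2} and direct appeals to Theorems~\ref{thm-premodular-1}, \ref{thm-pole-z}, \ref{thm-II-8} and Lemma~\ref{lem-3.1}.
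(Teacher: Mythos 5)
Your proposal is correct and follows essentially the same route as the paper: obtain $B_0$ from Theorem \ref{thm-premodular-1} (uniqueness via \eqref{fc-glo-uni}), solve the affine relation \eqref{B1} (resp.\ \eqref{B2}) for $h$, invoke Lemma \ref{lem-3.1} for $\mathbf{n}_0^{\pm}$ to produce $p_h^{\mathbf{n}_0^{\pm}}$, run the GLE-to-H degeneration from the proof of Theorem \ref{thm-pole-z} with the monodromy-transfer result of Chen--Kuo--Lin, and identify $p_h^{\mathbf{n}_0^{\pm}}=p_{r,s}^{\mathbf{n}_0^{\pm}}$ via Theorem \ref{thm-II-8}. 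Your only deviations are cosmetic: you phrase the monodromy transfer in the forward (GLE-to-limit) direction rather than citing it backward as the paper does, and you insert an intermediate data $(r',s')\sim(r,s)$ resolved by Theorem \ref{thm-II-8}-(2), where the paper concludes directly from Theorem \ref{thm-II-8}-(1).
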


\begin{proof} Note that the existence of $B_0$ follows from Theorem \ref{thm-premodular-1} and the uniqueness of $B_0$ follows from (\ref{fc-glo-uni}).

By defining $h$ in terms of $B_0$ via (\ref{B1}), Lemma \ref{lem-3.1} implies the existence of a solution $p_{h}^{\mathbf{n}_0^+}(\tau)$ of EPVI$_{\mathbf{n}_0^+}$ such that
\[p_{h}^{\mathbf{n}_0^+}(\tau)=c_{n_0+1}(\tau-\tau_{0})^{\frac{1}{2}}%
(1+h(\tau-\tau_{0})+O(\tau-\tau_{0})^{2})\text{ as }\tau
\rightarrow \tau_{0},\]
with $c_{n_0+1}^2=i\frac{2n_{0}+3}{2\pi}$. Then as mentioned in the proof of Theorem \ref{thm-pole-z}, the associated
GLE$(\mathbf{n}, p_{h}^{\mathbf{n}_{0}^{+}}(\tau), A(\tau), \tau)$
converges to H$(\mathbf{n},B_{0}, \tau_0)$. Since the monodromy of H$(\mathbf{n},B_{0}, \tau_0)$ is given by (\ref{Mono0-1}), \cite[Theorem 6.2]{Chen-Kuo-Lin} says that the monodromy of GLE$(\mathbf{n}, p_{h}^{\mathbf{n}_{0}^{+}}(\tau), A(\tau), \tau)$ is given by (\ref{II-101}). From here and Theorem \ref{thm-II-8}, we conclude that $p_{h}^{\mathbf{n}_0^+}(\tau)=p_{r,s}^{\mathbf{n}_0^+}(\tau)$. This proves the assertion (1), and the assertion (2) can be proved similarly.
\end{proof}

Given $\mathbf{n}=(n_0,n_1,n_2,n_3)$, we define
\begin{equation}
\mathbf{n}_k:=(n_{k,0},n_{k,1},n_{k,2},n_{k,3})=\begin{cases} (n_1,n_0,n_3,n_2)\quad\text{if }\;\, k=1,\\
(n_2,n_3,n_0,n_1)\quad\text{if }\;\, k=2,\\
(n_3,n_2,n_1,n_0)\quad\text{if }\;\, k=3.\end{cases}
\end{equation}
Then it follows from (\ref{124}) that
\begin{equation}\text{\it $p^{\mathbf{n}}(\tau)$ solves EPVI$_{\mathbf{n}}$ if and only if  $p^{\mathbf{n}}(\tau)-\tfrac{\omega_k}{2}$ solves EPVI$_{\mathbf{n}_k}$}.\end{equation}
Given $(r,s)$ we define
\begin{equation}\label{fc90}
(r_k,s_k):=\begin{cases} (r+\frac12,s)\quad\text{if }\;\, k=1,\\
(r,s+\frac12)\quad\text{if }\;\, k=2,\\
(r+\frac12,s+\frac12)\quad\text{if }\;\, k=3.\end{cases}
\end{equation}
This following result will play a crucial role in our proof of Theorem \ref{thm-rsbtau}.
\begin{lemma} \label{lem-3.4}
$p_{r_k,s_k}^{\mathbf{n}}(\tau)-\tfrac{\omega_k}{2}=p_{r,s}^{\mathbf{n}_k}(\tau)$ in the sense of Remark \ref{identify}.
\end{lemma}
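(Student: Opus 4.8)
The plan is to prove Lemma~\ref{lem-3.4} by relating the monodromy of the two solutions and then invoking the uniqueness statement in Theorem~\ref{thm-II-8}(1). First I would observe that the shift identity preceding the lemma, namely that $p^{\mathbf{n}}(\tau)$ solves EPVI$_{\mathbf{n}}$ if and only if $p^{\mathbf{n}}(\tau)-\tfrac{\omega_k}{2}$ solves EPVI$_{\mathbf{n}_k}$, already guarantees that $q(\tau):=p_{r_k,s_k}^{\mathbf{n}}(\tau)-\tfrac{\omega_k}{2}$ is \emph{a} solution of EPVI$_{\mathbf{n}_k}$. Thus both sides of the claimed equality are solutions of the same equation EPVI$_{\mathbf{n}_k}$, and by Theorem~\ref{thm-II-8}(2) it suffices to show they carry the same monodromy data $(r,s)\bmod\mathbb{Z}^2$ (up to sign), i.e. that $q(\tau)=p_{r,s}^{\mathbf{n}_k}(\tau)$ in the sense of Remark~\ref{identify}.

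The key step is therefore to track how the monodromy matrices of the associated GLE transform under the substitution $z\to z+\tfrac{\omega_k}{2}$. By construction $p_{r_k,s_k}^{\mathbf{n}}(\tau)$ has monodromy data $(r_k,s_k)$ for GLE$(\mathbf{n},\cdot,\cdot,\tau)$, meaning $N_1,N_2$ are diagonal with entries $e^{\mp 2\pi i s_k}$ and $e^{\pm 2\pi i r_k}$ as in \eqref{II-101}. Translating $z$ by $\tfrac{\omega_k}{2}$ carries GLE with potential centered at $\tfrac{\omega_j}{2}$ into GLE with the permuted multiplicities $\mathbf{n}_k$, and simultaneously conjugates/modifies the loops $\ell_1,\ell_2$. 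I would compute, using the quasi-periodicity \eqref{tran-law} of $\sigma$ and the explicit half-period shifts in \eqref{fc90}, the effect of the translation on the diagonal monodromy exponents. The arithmetic is designed so that the half-integer shifts $r_k=r+\tfrac12$ or $s_k=s+\tfrac12$ exactly cancel the sign factors $e^{\pm\pi i}=-1$ picked up from $\sigma(z+\omega_j)$ under the half-period translation, leaving the monodromy data of $q(\tau)$ equal to $(r,s)$. Concretely, for $k=1$ the loop encircling the shift absorbs a factor of $-1$ in the $\ell_1$-direction that converts $e^{-2\pi i s_1}=e^{-2\pi i s}$ while turning $e^{2\pi i r_1}=e^{2\pi i(r+1/2)}=-e^{2\pi i r}$ back into $e^{2\pi i r}$ after accounting for the permutation-induced change of sign convention; the cases $k=2,3$ are analogous with the roles of $r$ and $s$ (and both) swapped.

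The main obstacle I anticipate is bookkeeping the signs correctly: I must verify that the $(-1)$ factors arising from the half-period translation of the product $\prod_i\sigma(z\pm a_i)/\prod_k\sigma(z-\tfrac{\omega_k}{2})^{n_k}$ in \eqref{yby}, together with the reindexing of the multiplicities $n_0,\ldots,n_3$ prescribed by $\mathbf{n}_k$, combine precisely so that the shift $(r,s)\mapsto(r_k,s_k)$ in \eqref{fc90} is the \emph{unique} shift making the monodromy invariant. To nail this down I would fix a basis of solutions for the $\mathbf{n}$-equation, apply the translation $z\to z+\tfrac{\omega_k}{2}$ to get an explicit basis for the $\mathbf{n}_k$-equation, and read off $N_1,N_2$ directly from the transformation law \eqref{tran-law}; the resulting exponents must match \eqref{II-101} with data $(r,s)$. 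Once the monodromy of $q(\tau)$ is shown to equal $(r,s)$, Theorem~\ref{thm-II-8}(1) forces $q(\tau)=p_{r,s}^{\mathbf{n}_k}(\tau)$, completing the proof. A clean alternative, if the direct sign computation proves delicate, is to use the pre-modular characterization: both $p_{r_k,s_k}^{\mathbf{n}}$ and $p_{r,s}^{\mathbf{n}_k}$ degenerate at the same $\tau_0$ values to the same Hill equation H$(\mathbf{n}_k,B_0,\tau_0)$ via Theorem~\ref{thm-pole-z}, pinning down the data through $Z_{r,s}^{\mathbf{n}_k}$, though I expect the direct monodromy argument to be the most transparent.
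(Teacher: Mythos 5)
Your proposal follows essentially the same route as the paper's proof: both reduce, via Theorem \ref{thm-II-8}, to showing that the GLE associated with $p_{r_k,s_k}^{\mathbf{n}}(\tau)-\tfrac{\omega_k}{2}$ has monodromy \eqref{II-101} with data $(r,s)$, and both verify this by translating an explicit solution basis by $\tfrac{\omega_k}{2}$ and checking that the sign factors produced by the $\sigma$-transformation law \eqref{tran-law} exactly cancel the half-integer shifts in \eqref{fc90}. The only caveats are bookkeeping details the paper treats explicitly --- the correct basis is $\Phi_{p}(z)\hat{y}(\pm z)$ for the GLE rather than the Heun basis \eqref{yby}, one needs $A^{\mathbf{n}_k}(\tau)=A^{\mathbf{n}}(\tau)$ from the Hamiltonian system \eqref{142-0}, and one must use the single-valuedness $\ell_j^*\Phi_{p}=\Phi_{p}$ from \cite{CKL1} --- but these fit inside your outlined computation, so your strategy is the paper's.
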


\begin{proof}
Denote $p^{\mathbf{n}_k}(\tau):=p_{r_k,s_k}^{\mathbf{n}}(\tau)-\tfrac{\omega_k}{2}$, which is a solution of EPVI$_{\mathbf{n}_k}$. By Theorem \ref{thm-II-8}, to prove $p^{\mathbf{n}_k}=p^{\mathbf{n}_k}_{r,s}$ is equivalent to prove that the monodromy of the
associated GLE$(\mathbf{n}_k$, $ p^{\mathbf{n}_k}(\tau), A^{\mathbf{n}_k}(\tau), \tau)$ is given by (\ref{II-101}).

Theorem \ref{thm-II-8} says that the monodromy group of the
associated GLE$(\mathbf{n}$, $ p_{r_k,s_k}^{\mathbf{n}}(\tau), A^{\mathbf{n}}(\tau), \tau)$ is generated by
\[\rho(\gamma_{\pm})=-I_{2},\quad N_{1}=%
\begin{pmatrix}
e^{-2\pi is_k} & 0\\
0 & e^{2\pi is_k}%
\end{pmatrix},\quad N_{2}=%
\begin{pmatrix}
e^{2\pi ir_k} & 0\\
0 & e^{-2\pi ir_k}%
\end{pmatrix}.\]
More precisely, since the local exponents of GLE$(\mathbf{n}$, $ p_{r_k,s_k}^{\mathbf{n}}(\tau), A^{\mathbf{n}}(\tau), \tau)$ at $\pm p_{r_k,s_k}^{\mathbf{n}}(\tau)$ are $
\{-\frac{1}{2}, \frac{3}{2}\}$ and $\pm p_{r_k,s_k}^{\mathbf{n}}(\tau)\not \in E_{\tau}[2]$ are apparent singularities, it was proved in \cite{Chen-Kuo-Lin} that GLE$(\mathbf{n}$, $ p_{r_k,s_k}^{\mathbf{n}}(\tau), A^{\mathbf{n}}(\tau), \tau)$ has a linearly independent solutions of the form
\[y_1(z)=\Phi_{p_{r_k,s_k}^{\mathbf{n}}}(z)\hat{y}(z),\quad y_2(z)=\Phi_{p_{r_k,s_k}^{\mathbf{n}}}(z)\hat{y}(-z),\]
where
\[\Phi_{p}(z):=\frac{\sigma(z)}{\sqrt{\sigma(z-p)
\sigma(z+p)}},\]
and $\hat{y}(z)$ is meromorphic in $\mathbb{C}$ and satisfies the transformation law
\begin{equation}\label{eq-hat}\hat{y}(z+1)=e^{-2\pi i s_k}\hat{y}(z),\quad \hat{y}(z+\tau)=e^{2\pi i r_k}\hat{y}(z),\end{equation}
namely $\hat{y}(z)$ is elliptic of the second kind. Note that $\Phi_{p}(z)^2$ is an even elliptic function.

Now since $p^{\mathbf{n}_k}(\tau)=p_{r_k,s_k}^{\mathbf{n}}(\tau)-\frac{\omega_k}{2}$ and (recall $\eta_3=\eta_1+\eta_2$ and $\tau\eta_1-\eta_2=2\pi i$)
\[\frac{d}{d\tau}\frac{\omega_k}{2}=\frac{-i}{4\pi}(w_k\eta_1-\eta_k),\] it follows from the first equation of (\ref{142-0}) that the corresponding $A^{\mathbf{n}_k}(\tau)=A^{\mathbf{n}}(\tau)$. Then it is easy to see that
\[\tilde{y}_1(z):=y_1(z-\tfrac{\omega_k}{2})
=\Phi_{p_{r_k,s_k}^{\mathbf{n}}}(z-\tfrac{\omega_k}{2})\hat{y}(z-\tfrac{\omega_k}{2}),\]
\[\tilde{y}_2(z):=y_2(z-\tfrac{\omega_k}{2})
=\Phi_{p_{r_k,s_k}^{\mathbf{n}}}(z-\tfrac{\omega_k}{2})\hat{y}(-z+\tfrac{\omega_k}{2}),\]
are linearly independent solutions of GLE$(\mathbf{n}_k, p^{\mathbf{n}_k}(\tau), A^{\mathbf{n}_k}(\tau), \tau)$. It suffices to prove that the monodromy matrix of GLE$(\mathbf{n}_k$, $ p^{\mathbf{n}_k}(\tau), A^{\mathbf{n}_k}(\tau), \tau)$ with respect to $(\tilde{y}_1,\tilde{y}_2)$ are given by
(\ref{II-101}).

By the transformation law \eqref{tran-law} we have
\[\sigma(z-p^{\mathbf{n}_k}-{\omega_k})=-e^{-\eta_k(z-p^{\mathbf{n}_k}-\frac{\omega_k}{2})}\sigma(z-p^{\mathbf{n}_k}),\]
so
\begin{align*}
\Phi_{p_{r_k,s_k}^{\mathbf{n}}}(z-\tfrac{\omega_k}{2})
&=\frac{\sigma(z-\tfrac{\omega_k}{2})}
{\sqrt{\sigma(z-p^{\mathbf{n}_k}-{\omega_k})
\sigma(z+p^{\mathbf{n}_k})}}\\
&=\Phi_{p^{\mathbf{n}_k}}(z)\xi_k(z),
\end{align*}
where
\[\xi_k(z):=\epsilon_ke^{\frac{\eta_k}{2}(z-p^{\mathbf{n}_k}-\frac{\omega_k}{2})}
\frac{\sigma(z-\frac{\omega_k}{2})}{\sigma(z)},\quad \epsilon_k\in\{\pm i\}.\]
Again by the transformation law \eqref{tran-law}, a direct computation gives
\[\xi_k(z+1)=e^{\frac{\eta_k-\omega_k\eta_1}{2}}\xi_k(z)=
\begin{cases}
\xi_k(z)\quad\text{if }k=1,\\
-\xi_k(z)\quad \text{if }k=2,3,
\end{cases}\]
\[\xi_k(z+\tau)=e^{\frac{\tau\eta_k-\omega_k\eta_2}{2}}\xi_k(z)=
\begin{cases}
\xi_k(z)\quad\text{if }k=2,\\
-\xi_k(z)\quad \text{if }k=1,3.
\end{cases}\]
Now for GLE$(\mathbf{n}_k$, $ p^{\mathbf{n}_k}(\tau), A^{\mathbf{n}_k}(\tau), \tau)$, recalling that $\ell_{j}, j=1,2$, are two fundamental cycles of $E_{\tau}$ connecting
$q_{0}$ with $q_{0}+\omega_{j}$ such that $\ell_{j}$ does not intersect with
$L+\Lambda_{\tau}$ (here $L$ is the straight segment connecting $\pm p^{\mathbf{n}_k}$), it was proved in \cite[Lemma 2.2]{CKL1} that
\[\ell_j^*\Phi_{p^{\mathbf{n}_k}}(z)=\Phi_{p^{\mathbf{n}_k}}(z),\quad j=1,2,\]
where $\ell_j^*y(z)$ denotes the analytic continuation of $y(z)$ along $\ell_j$. Together these with $\tilde{y}_1(z)=\Phi_{p^{\mathbf{n}_k}}(z)\xi_k(z)\hat{y}(z-\tfrac{\omega_k}{2})$, (\ref{eq-hat}) and (\ref{fc90}), we finally obtain
\[\ell_1^*\tilde{y}_1(z)=\Phi_{p^{\mathbf{n}_k}}(z)\xi_k(z+1)\hat{y}(z-\tfrac{\omega_k}{2}+1)=e^{-2\pi i s}\tilde{y}_1(z),\]
\[ \ell_2^*\tilde{y}_1(z)=\Phi_{p^{\mathbf{n}_k}}(z)\xi_k(z+\tau)\hat{y}(z-\tfrac{\omega_k}{2}+\tau)=e^{2\pi i r}\tilde{y}_1(z).\]
Similarly, we obtain from $\tilde{y}_2(z)=\Phi_{p^{\mathbf{n}_k}}(z)\xi_k(z)\hat{y}(-z+\tfrac{\omega_k}{2})$ that
\[\ell_1^*\tilde{y}_2(z)=e^{2\pi i s}\tilde{y}_2(z),\quad \ell_2^*\tilde{y}_2(z)=e^{-2\pi i r}\tilde{y}_2(z).\]
In conclusion, the monodromy matrix of GLE$(\mathbf{n}_k$, $ p^{\mathbf{n}_k}(\tau), A^{\mathbf{n}_k}(\tau), \tau)$ with respect to $(\tilde{y}_1,\tilde{y}_2)$ are given by
(\ref{II-101}), and so Theorem \ref{thm-II-8} implies $p^{\mathbf{n}_k}(\tau)=p^{\mathbf{n}_k}_{r,s}(\tau)$ in the sense of Remark \ref{identify}. This completes the proof.
\end{proof}

\section{Simple zero property of $Z_{r,s}^{(\mathbf{n})}(\tau)$}
\label{sec-simplezero}

This section is denoted to the proof of Theorem \ref{thm-simplezero}.

\begin{proof}[Proof of Theroem \ref{thm-simplezero}] Fix $(r_0,s_0)\in\mathbb{C}^2\setminus\frac{1}{2}\mathbb{Z}^2$. Assume by contradiction that
$Z_{r_0,s_0}^{\mathbf{n}}(\cdot)$ has a zero $\tau_0$ of order $k\geq 2$. Then there is a small open neighborhood $V$ of $\tau_0$ such that $Z_{r_0,s_0}^{\mathbf{n}}(\tau)$ has no zeros in $\overline{V}\setminus\{\tau_0\}$.

We divide the proof into several steps.

{\bf Step 1.} We show that there is a small open neighborhood $U\subset\mathbb{C}^2\setminus\frac{1}{2}\mathbb{Z}^2$ of $(r_0, s_0)$ such that for any $(r,s)\in U$, $Z_{r,s}^{\mathbf{n}}(\cdot)$ has a zero $\tau(r,s)$ of order $k$ satisfying $\tau(r,s)\to \tau_0$ as $(r,s)\to (r_0, s_0)$ and $Z_{r,s}^{\mathbf{n}}(\cdot)$ has no zeros in $V\setminus\{\tau(r,s)\}$.

Since $Z_{r,s}^{\mathbf{n}}(\tau)$ is meromorphic in $\tau$, it follows from Rouch\'{e}'s theorem that there is a small open neighborhood $U\subset\mathbb{C}^2\setminus\frac{1}{2}\mathbb{Z}^2$ of $(r_0, s_0)$ such that for any $(r,s)\in U$, $Z_{r,s}^{\mathbf{n}}(\cdot)$ has exactly $k$ zeros
\[\tau_1(r,s),\cdots,\tau_k(r,s) \quad\text{\it up to multiplicity in $V$}\]
and $\tau_j(r,s)\to \tau_0$ for all $1\leq j\leq k$ as $(r,s)\to (r_0,s_0)$. On the other hand, we define
\[F_{r,s}(\tau):=\frac{1}{\wp(p_{r,s}^{\mathbf{n}_0^+}(\tau);\tau)},\]
which is meromorphic in $\tau$. Then by Theorem \ref{thm-pole-z1}-(1) and Lemma \ref{lem-3.1}, we see that $\tau_0$ is a {\it simple zero} of $F_{r_0,s_0}(\tau)$ and $ F_{r_0,s_0}(\tau)$ has no other zeros in $\overline{V}$. Again by Rouch\'{e}'s theorem, the zero number of $F_{r,s}(\tau)$ is also $1$ in $V$ for any $(r,s)\in U$ (by taking $U$ smaller if necessary). This, together with the fact that $\tau_{j}(r,s)\in V$ is a {\it simple zero} of of $F_{r,s}(\tau)$ for each $1\leq j\leq k$, implies
\[\tau_1(r,s)=\cdots=\tau_k(r,s),\]
namely $\tau(r,s):=\tau_1(r,s)$ is a zero of $Z_{r,s}^{\mathbf{n}}(\tau)$ of order $k$ for any $(r,s)\in U$.

{\bf Step 2. } We prove that $U\ni (r,s)\to \tau(r,s)$ is holomorphic. Consequently,
\begin{equation}\label{fc-fc1}
(\tfrac{\partial}{\partial r}Z_{r,s}^{\mathbf{n}})(\tau(r,s))=(\tfrac{\partial}{\partial s}Z_{r,s}^{\mathbf{n}})(\tau(r,s))=0, \quad\forall (r,s)\in U.
\end{equation}
Indeed, we define
\[G_{r,s}(\tau):=\frac{\partial^{k-1}}{\partial \tau^{k-1}}Z_{r,s}^{\mathbf{n}}(\tau),\]
which is meromorphic in $\tau$. By Step 1 we know that $\tau(r,s)$ is a simple zero of $G_{r,s}(\tau)$ for any $(r,s)\in U$, so the implicit function theorem yields that $U\ni (r,s)\to \tau(r,s)$ is holomorphic. Now since
$Z_{r,s}^{\mathbf{n}}(\tau(r,s))\equiv 0$, we obtain
\[(\tfrac{\partial}{\partial r}Z_{r,s}^{\mathbf{n}})(\tau(r,s))+(\tfrac{\partial}{\partial \tau}Z_{r,s}^{\mathbf{n}})(\tau(r,s))\tau_{r}(r,s)=0,\]
\[(\tfrac{\partial}{\partial s}Z_{r,s}^{\mathbf{n}})(\tau(r,s))+(\tfrac{\partial}{\partial \tau}Z_{r,s}^{\mathbf{n}})(\tau(r,s))\tau_{s}(r,s)=0,\]
where $\tau_r(r,s):=\frac{\partial{\tau(r,s)}}{\partial r}$ and $\tau_s(r,s):=\frac{\partial{\tau(r,s)}}{\partial s}$. Since Step 1 and $k\geq 2$ imply $(\tfrac{\partial}{\partial \tau}Z_{r,s}^{\mathbf{n}})(\tau(r,s))=0$, we obtain (\ref{fc-fc1}).

{\bf Step 3.} Recall Theorem \ref{thm-5A}-(3) and (\ref{kk-ll}) that
\begin{align*}Z_{r,s}^{\mathbf{n}}(\tau)=&W_{\mathbf{n}}(Z_{r,s}(\tau);r+s\tau,\tau)\\
\in &\mathbb{Q}[e_1(\tau),e_2(\tau
),e_3(\tau),\wp(r+s\tau;\tau),\wp^{\prime}(r+s\tau;\tau)][Z_{r,s}(\tau)].\end{align*}
Since \eqref{z-rs} implies 
\[\frac{\partial Z_{r,s}(\tau)}{\partial s}=-(\tau\wp+\eta_2)=-\tau(\wp+\eta_1)+2\pi i
=\tau \frac{\partial Z_{r,s}(\tau)}{\partial r}+2\pi i,\]
we easily obtain
\begin{equation}\label{eq-ex-taurs}\frac{\partial Z_{r,s}^{\mathbf{n}}(\tau)}{\partial s}=\tau \frac{\partial Z_{r,s}^{\mathbf{n}}(\tau)}{\partial r}+2\pi i \frac{\partial{W_{\mathbf{n}}}}{\partial X}(Z_{r,s}(\tau);r+s\tau,\tau).\end{equation}
From here and (\ref{fc-fc1}), we obtain
\begin{equation}
\label{fc-fc2} \frac{\partial{W_{\mathbf{n}}}}{\partial X}(Z_{r,s}(\tau(r,s));r+s\tau(r,s),\tau(r,s))\equiv 0, \quad\forall (r,s)\in U.
\end{equation}

{\bf Step 4.} Since $(r_0,s_0)\in\mathbb{C}^2\setminus\frac{1}{2}\mathbb{Z}^2$ and
$Z_{r_0,s_0}^{\mathbf{n}}(\tau_0)=0$, as mentioned in Theorem \ref{thm-pole-z1}, there is a unique $B_0\in\mathbb{C}$ such that the monodromy data of H$(\mathbf{n},B_{0}, \tau_0)$ is given by this $(r_0,s_0)$ and in particular $Q_{\mathbf{n}}(B_0;\tau_0)\neq 0$. Then there is $\varepsilon>0$ such that for any
$|B-B_0|<\varepsilon$, we have $Q_{\mathbf{n}}(B;\tau_0)\neq 0$, i.e. the monodromy of H$(\mathbf{n},B, \tau_0)$ is given by (\ref{Mono-1}) for some $(r,s)=(r(B),s(B))\notin\frac12\mathbb Z^2$ such that $(r(B),s(B))\to (r_0,s_0)$ as $B\to B_0$. Then Theorem \ref{thm-premodular} says
\begin{equation}\label{gvs}Z_{r(B),s(B)}^{\mathbf{n}}(\tau_0)=0.\end{equation}By taking $\varepsilon$ smaller we may assume $(r(B),s(B))\in U$. Then \eqref{gvs} and Step 1 together imply
\begin{equation}\label{fc-fc4}\tau(r(B),s(B))\equiv \tau_0,\quad \forall |B-B_0|<\varepsilon.\end{equation}

On the other hand, recalling the addition map (\ref{fc-fc3}):
\[\sigma
_{\mathbf{n}}:\overline{Y_{\mathbf{n}}(  \tau_0 )}\rightarrow E_{\tau_0},\]
the branch loci of which is a discrete set. So we can take $B$ satisfying $|B-B_0|<\varepsilon$ such that
\[\sigma:=r(B)+s(B)\tau_0 \notin E_{\tau_0}[2]\;\;\text{\it is outside the branch loci of }\sigma
_{\mathbf{n}}.\]
Then Theorem \ref{thm-5A} says that $W_{\mathbf{n}}(\cdot; \sigma,\tau_0)$ has $\frac12\sum_{k}n_k(n_k+1)$ distinct roots and so $Z_{r(B), s(B)}(\tau_0)$ is a {\it simple zero} of $W_{\mathbf{n}}(\cdot; \sigma,\tau_0)$, i.e.
\[\frac{\partial W_{\mathbf{n}}}{\partial X}(Z_{r(B), s(B)}(\tau_0); \sigma,\tau_0)\neq 0.\]
However, (\ref{fc-fc2}) and (\ref{fc-fc4}) imply
\[\frac{\partial W_{\mathbf{n}}}{\partial X}(Z_{r(B), s(B)}(\tau_0); \sigma,\tau_0)= 0,\]
clearly a contradiction.

Therefore, any zero $\tau_0$ of $Z_{r_0,s_0}^{\mathbf{n}}(\cdot)$ must be simple. This completes the proof.
\end{proof}

\section{Proof of Theorem \ref{thm-rsbtau}: the special cases}

In this section, we prove Theorem \ref{thm-rsbtau} for the first two Lam\'{e} case $(n,0,0,0)$ with $n=1,2$. We recall the following formulas (see e.g. \cite{YB}):
\begin{align*}
\frac{\partial}{\partial \tau}\zeta(  z;\tau)  =\frac{i}{4\pi
}\Big[&\wp^{\prime}(z;\tau)  +2(  \zeta(  z;\tau)
-z\eta_{1}(\tau))  \wp(z;\tau)\\
&+2\eta_{1}(\tau)\zeta (z;\tau)  -\frac{1}{6}zg_{2}(\tau)\Big],
\end{align*}
\begin{align}\label{eq-deri-wp}
\frac{\partial}{\partial \tau}\wp(z;\tau)  =\frac{-i}{4\pi}\Big[
&2(  \zeta(  z;\tau)  -z\eta_{1}(  \tau))
\wp^{\prime}(  z;\tau) \\
&+4(  \wp(  z;\tau)  -\eta_{1}(\tau))  \wp(
z;\tau)  -\frac{2}{3}g_{2}(\tau)
\Big],\nonumber
\end{align}
\begin{align*}
\frac{\partial}{\partial \tau}\wp^{\prime}(z;\tau)  =\frac
{-i}{4\pi}\Big[
&6(  \wp(  z;\tau)  -\eta_{1}(\tau))  \wp^{\prime}(
z;\tau) \\
&+(  \zeta(z;\tau)  -z\eta_{1}(\tau))
(  12\wp^{2}(  z;\tau)  -g_{2}(\tau))
\Big]  ,
\end{align*}
\[
\frac{d}{d\tau}\eta_{1}(  \tau)  =\frac{i}{24\pi}\left[  12\eta
_{1}(\tau)^{2}-g_{2}(\tau)  \right],
\]
\[\wp''(z;\tau)=\frac12[12\wp(z;\tau)^2-g_2(\tau)].\]
By applying these formulas and
\begin{equation}\label{fc-20}
Z_{r,s}(\tau)=\zeta(r+s\tau;\tau)-(r+s\tau)\eta_1(\tau)+2\pi i s,\end{equation}
a direct computation leads to 
\begin{align}\label{fc-21}\frac{\partial Z_{r,s}(\tau)}{\partial\tau}
&=-s\wp+\left(\tfrac{\partial}{\partial \tau}\zeta(z;\tau)\right)\Big|_{z=r+s\tau}-s\eta_1-\frac{i(r+s\tau)}{24\pi}[  12\eta
_{1}^{2}-g_{2}]\\
&=\frac{i}{4\pi}\wp'+\frac{i}{2\pi}(\wp+\eta_1)Z,\nonumber\end{align}
\begin{align}\label{fc-22}
\frac{\partial\wp(r+s\tau;\tau)}{\partial\tau}&=s\wp'+\left(\tfrac{\partial}{\partial \tau}\wp(z;\tau)\right)\Big|_{z=r+s\tau}\\
&=\frac{-i}{2\pi}[Z\wp'+2\wp^2-2\wp\eta_1
-\tfrac{1}{3}g_2],\nonumber
\end{align}
\begin{align}\label{fc-22-1}
\frac{\partial\wp'(r+s\tau;\tau)}{\partial\tau}&=s\wp''+\left(\tfrac{\partial}{\partial \tau}\wp'(z;\tau)\right)\Big|_{z=r+s\tau}\\
&=\frac{-i}{4\pi}[Z(12\wp^2-g_2)+6(\wp-\eta_1)\wp'],\nonumber
\end{align}
where as before, we write
\[Z_{r,s}(\tau)=Z,\quad\wp(r+s\tau;\tau)=\wp,\quad\wp'(r+s\tau;\tau)=\wp'\]
freely for convenience when there is no confusion arising.

\subsection{The case $n=1$} For this simplest case, $Z_{r,s}^{(1,0,0,0)}(\tau)=Z_{r,s}(\tau)=Z$.

By Theorem \ref{thm-premodular-1}, the monodromy data of
\begin{equation}
\label{eq-lame1}y''=[2\wp(z;\tau)+B]y(z)
\end{equation}
is $(r,s)\notin \frac12\mathbb{Z}^2$ if and only if
\[Z_{r,s}(\tau)=0,\quad B=\wp(r+s\tau;\tau).\]
Since $\tau$ is a simple zero of $Z_{r,s}(\cdot)$, it follows from the implicit function theorem that $\tau=\tau(r,s)$ is holomorphic in $(r,s)\in U$, where $U$ is a small open subset in $\mathbb{C}^2\setminus\frac12\mathbb{Z}^2$.
By $Z_{r,s}(\tau(r,s))=0$ and (\ref{fc-20})-(\ref{fc-21}), we have
\[\tau_r:=\frac{\partial\tau(r,s)}{\partial r}=-\frac{\frac{\partial Z_{r,s}(\tau)}{\partial r}}{\frac{\partial Z_{r,s}(\tau)}{\partial \tau}}=\frac{\wp+\eta_1}{\frac{i}{4\pi}\wp'},\]
\[\tau_s:=\frac{\partial\tau(r,s)}{\partial s}=-\frac{\frac{\partial Z_{r,s}(\tau)}{\partial s}}{\frac{\partial Z_{r,s}(\tau)}{\partial \tau}}=\frac{\tau(\wp+\eta_1)-2\pi i}{\frac{i}{4\pi}\wp'}
=\tau \tau_r-\frac{8\pi^2}{\wp'}.\]
Similarly by $B=\wp(r+s\tau(r,s);\tau(r,s))$ we have
\[B_r:=\frac{\partial B}{\partial r}=\wp'+\frac{\partial \wp}{\partial \tau}\tau_r,\quad
B_s:=\frac{\partial B}{\partial s}=\tau \wp'+\frac{\partial \wp}{\partial \tau}\tau_s.\]
Therefore,
\[\tau_rB_s-\tau_sB_r=(\tau \tau_r-\tau_s)\wp'=8\pi^2,\]
i.e. $d\tau\wedge dB=8\pi^2dr\wedge ds$. This proves Theorem \ref{thm-rsbtau} for the case $(1,0,0,0)$.

\subsection{The case $n=2$.}
For this case,
\[Z^{(2)}_{r,s}(\tau):=Z_{r,s}^{(2,0,0,0)}=Z^3-3\wp Z-\wp'.\]
For the corresponding Lam\'{e} equation
\begin{equation}
\label{eq-lame2}
y''(z)=[6\wp(z;\tau)+B]y(z),
\end{equation}
the well-known associated spectral polynomial $Q_2(B;\tau):=Q_{(2,0,0,0)}(B;\tau)$ is
given by
\begin{equation}\label{eq-spec-n=2}
Q_2(B;\tau)=(B^2-3g_2)(B^3-\tfrac{9}{4}g_2B+\tfrac{27}{4}g_3).
\end{equation}
Define $\pm C$ by $C^2=Q_2(B;\tau)$.
Then the monodromy data of (\ref{eq-lame2}) is $(r,s)\notin\frac12\mathbb{Z}^2$ if and only if
$Z_{r,s}^{(2)}(\tau)=0$
and the following formulas hold (see \cite[Theorem 5.3, Example 5.8]{LW2})
\begin{equation}\label{fc-26}
\wp=\wp(r+s\tau;\tau)=\frac{B^3+27g_3}{9(B^2-3g_2)},
\end{equation}
\begin{equation}\label{fc-27}
\wp'=\wp'(r+s\tau;\tau)=C\frac{2(B^3-9g_2B-54g_3)}{27(B^2-3g_2)^2},
\end{equation}
\begin{equation}\label{fc-28}
Z=Z_{r,s}(\tau)=C\frac{2}{3(B^2-3g_2)}.
\end{equation}
Clearly $(-r,-s)$ corresponds to $-C$ in these formulas. By (\ref{fc-26})-(\ref{fc-28}), a direct computation gives
\begin{equation}\label{fc-29}
B=3(Z^2-\wp).
\end{equation}
On the other hand, by (\ref{fc-21})-(\ref{fc-22-1}) and $\wp''=6\wp^2-\frac{g_2}{2}$, a direct computation gives
\begin{align}\label{fc-33}
\frac{\partial Z^{(2)}_{r,s}(\tau)}{\partial\tau}
&=3(Z^2-\wp)\frac{\partial Z_{r,s}(\tau)}{\partial\tau}-3Z\frac{\partial \wp(r+s\tau;\tau)}{\partial\tau}-\frac{\partial \wp'(r+s\tau;\tau)}{\partial\tau}\nonumber\\
&=
\frac{3i(\wp+\eta_1)}{2\pi}Z_{r,s}^{(2)}(\tau)+
\frac{3i}{4\pi}[3\wp'Z^2+(12\wp^2-g_2)Z+3\wp\wp'],
\end{align}
\begin{align}\label{fc-34}
\frac{\partial Z^{(2)}_{r,s}(\tau)}{\partial r}&=3(Z^2-\wp)\frac{\partial Z_{r,s}(\tau)}{\partial r}-3Z\frac{\partial \wp(r+s\tau;\tau)}{\partial r}-\frac{\partial \wp'(r+s\tau;\tau)}{\partial r}
\nonumber\\&=
-3(Z^2-\wp)(\wp+\eta_1)-(3Z\wp'+6\wp^2-\tfrac{g_2}{2}),\end{align}
\begin{align}\label{fc-35}
\frac{\partial Z^{(2)}_{r,s}(\tau)}{\partial s}&=3(Z^2-\wp)\frac{\partial Z_{r,s}(\tau)}{\partial s}-3Z\frac{\partial \wp(r+s\tau;\tau)}{\partial s}-\frac{\partial \wp'(r+s\tau;\tau)}{\partial s}
\nonumber\\&=\tau \frac{\partial Z^{(2)}_{r,s}(\tau)}{\partial r}+6\pi i (Z^2-\wp).
\end{align}

Now by $Z_{r,s}^{(2)}(\tau)=0$ and the implicit function theorem, $\tau=\tau(r,s)$ is holomorphic in $(r,s)\in U$, where $U$ is a small open subset in $\mathbb{C}^2\setminus\frac12\mathbb{Z}^2$.
Consequently, inserting (\ref{fc-26})-(\ref{fc-29}) into \eqref{fc-33}-\eqref{fc-35} leads to
\begin{align*}
\frac{\partial Z^{(2)}_{r,s}(\tau)}{\partial\tau}=
\frac{3i}{4\pi}[3\wp'Z^2+(12\wp^2-g_2)Z+3\wp\wp']=\frac{iC}{6\pi},
\end{align*}
\begin{align*}
\frac{\partial Z^{(2)}_{r,s}(\tau)}{\partial r}&=-3(Z^2-\wp)(\wp+\eta_1)-(3Z\wp'+6\wp^2-\tfrac{g_2}{2})\\
&=-\tfrac{1}{3}(B^2+3\eta_1B-\tfrac{3}{2}g_2),
\end{align*}
and so
\begin{align}\label{fc-38}
\tau_r=-\frac{\frac{\partial Z^{(2)}_{r,s}(\tau)}{\partial r}}{\frac{\partial Z^{(2)}_{r,s}(\tau)}{\partial \tau}}=-2\pi i \frac{B^2+3\eta_1B-\frac{3}{2}g_2}{C},
\end{align}
\begin{align}\label{fc-39}
\frac{\tau_s}{\tau_r}=\frac{\frac{\partial Z^{(2)}_{r,s}(\tau)}{\partial s}}{\frac{\partial Z^{(2)}_{r,s}(\tau)}{\partial r}}=\tau +\frac{6\pi i (Z^2-\wp)}{\frac{\partial Z^{(2)}_{r,s}(\tau)}{\partial r}}=\tau-\frac{6\pi i B}{B^2+3\eta_1B-\frac{3}{2}g_2}.
\end{align}
On the other hand, (\ref{fc-29}) gives
\begin{align}\label{fc-39--1}\frac{B_r}{3}&=\frac{\partial}{\partial r}\Big(Z_{r,s}(\tau(r,s))^2-\wp(r+s\tau(r,s);\tau(r,s))\Big)\\
&=-(2Z(\wp+\eta_1)+\wp')+(2Z\tfrac{\partial Z_{r,s}(\tau)}{\partial \tau}-\tfrac{\partial \wp(r+s\tau;\tau)}{\partial \tau})\tau_r.\nonumber\end{align}
Inserting (\ref{fc-21})-(\ref{fc-22}) and (\ref{fc-26})-(\ref{fc-28}) into this formula and by a direct computation, we obtain
\[2Z(\wp+\eta_1)+\wp'=\frac{2C}{9}\frac{B+6\eta_1}{B^2-3g_2},\]
\begin{align}\label{fc-39--2}&2Z\frac{\partial Z_{r,s}(\tau)}{\partial \tau}-\frac{\partial \wp(r+s\tau;\tau)}{\partial \tau}\\
=&\frac{i}{2\pi}Z\left[\wp'+2(\wp+\eta_1)Z\right]+\frac{i}{2\pi}[Z\wp'+2\wp^2-2\wp\eta_1
-\tfrac{1}{3}g_2]\nonumber\\
=&\frac{i}{9\pi}(B^2+3\eta_1B-\tfrac{3}{2}g_2),\nonumber\end{align}
and so
\begin{align}\label{fc-40}
\frac{B_r}{3}=-\frac{2C}{9}\frac{B+6\eta_1}{B^2-3g_2}
+\frac{i}{9\pi}(B^2+3\eta_1B-\tfrac{3}{2}g_2)\tau_r.
\end{align}
Similarly, by \eqref{fc-28} and \eqref{fc-39}-\eqref{fc-39--2}, we also have
\begin{align}\label{fc-41}\frac{B_s}{3}&=\frac{\partial}{\partial s}\Big(Z_{r,s}(\tau(r,s))^2-\wp(r+s\tau(r,s);\tau(r,s))\Big)\\&=4\pi iZ-\tau(2Z(\wp+\eta_1)+\wp')+\Big(2Z\frac{\partial Z_{r,s}(\tau)}{\partial \tau}-\frac{\partial \wp(r+s\tau;\tau)}{\partial \tau}\Big)\tau_s\nonumber\\
&=4\pi iZ+\tau \frac{B_r}{3}-\Big(2Z\frac{\partial Z_{r,s}(\tau)}{\partial \tau}-\frac{\partial \wp(r+s\tau;\tau)}{\partial \tau}\Big)\frac{6\pi i B}{B^2+3\eta_1B-\frac{3}{2}g_2}\tau_r\nonumber\\
&=\frac{8\pi i C}{3(B^2-3g_2)}+\tau\frac{B_r}{3}+\frac{2B}{3}\tau_r.\nonumber
\end{align}
Therefore, by (\ref{fc-38})-(\ref{fc-39}) and \eqref{fc-40}-\eqref{fc-41}, we have
{\allowdisplaybreaks
\begin{align*}
\tau_rB_s-\tau_sB_r&=\det\begin{pmatrix}\tau_r& \tau_r(\tau-\frac{6\pi i B}{B^2+3\eta_1B-\frac{3}{2}g_2})\\
B_r& \tau B_r+2B\tau_r+\frac{8\pi i C}{(B^2-3g_2)}
\end{pmatrix}\\
&=\tau_r\det\begin{pmatrix}1& -\frac{6\pi i B}{B^2+3\eta_1B-\frac{3}{2}g_2}\\
B_r& 2B\tau_r+\frac{8\pi i C}{(B^2-3g_2)}
\end{pmatrix}\\
&=\tau_r\Big(2B\tau_r+\frac{8\pi i C}{(B^2-3g_2)}+\frac{6\pi i B}{B^2+3\eta_1B-\frac{3}{2}g_2} B_r\Big)\\
&=\tau_r \Big(\frac{8\pi i C}{(B^2-3g_2)}-\frac{4\pi i C B(B+6\eta_1)}{(B^2-3g_2)(B^2+3\eta_1B-\frac{3}{2}g_2)}\Big)\\
&=\frac{4\pi i \tau_r C}{B^2+3\eta_1B-\frac{3}{2}g_2}=8\pi^2,
\end{align*}
}%
i.e. $d\tau\wedge dB=8\pi^2dr\wedge ds$. This proves Theorem \ref{thm-rsbtau} for the case $(2,0,0,0)$.

\subsection{A remark for the general case}

For general $\mathbf{n}$, it is impossible to prove Theorem \ref{thm-rsbtau} via direct computations. Instead, we will develop an induction approach to prove Theorem \ref{thm-rsbtau} in Section \ref{sec-induction}. Here we note that the following general result holds.

\begin{theorem}\label{thm-taurs} Fix $\mathbf{n}$. Then there exist rational functions
\[R_0(B;\tau), R_1(B;\tau)\in \mathbb{Q}[\eta_1,e_1,e_2,e_3,g_2,g_3](B)=\mathbb{Q}[\eta_1,e_1,e_2,e_3](B)\]
such that
\[\tau_r=\frac{\pi i}{C}R_0(B;\tau),\quad \frac{\tau_s}{\tau_r}=\tau+\pi i R_1(B;\tau).\]
In particular, for the Lam\'{e} case $\mathbf{n}=(n,0,0,0)$, there holds
\begin{equation}\label{eq-ex-llame}R_0(B;\tau), R_1(B;\tau)\in \mathbb{Q}[\eta_1,g_2,g_3](B).\end{equation}
\end{theorem}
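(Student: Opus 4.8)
The plan is to compute $\tau_r$ and $\tau_s$ from the implicit function theorem and then to read off the $B$-dependence from the hyperelliptic structure of the spectral curve. Write $f=f(r,s,\tau):=Z^{\mathbf n}_{r,s}(\tau)=W_{\mathbf n}(Z_{r,s}(\tau);r+s\tau,\tau)$. Since $\tau(r,s)$ is a \emph{simple} zero of $Z^{\mathbf n}_{r,s}(\cdot)$ by Theorem~\ref{thm-simplezero}, we have $\partial_\tau f\neq0$ along the locus $f=0$, and
\[\tau_r=-\frac{\partial_r f}{\partial_\tau f},\qquad \frac{\tau_s}{\tau_r}=\frac{\partial_s f}{\partial_r f}=\tau+2\pi i\,\frac{W_X}{\partial_r f},\]
where $W_X:=\tfrac{\partial W_{\mathbf n}}{\partial X}(Z_{r,s};r+s\tau,\tau)$ and the last equality is precisely (\ref{eq-ex-taurs}). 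Thus it suffices to understand the two quotients $\partial_r f/\partial_\tau f$ and $W_X/\partial_r f$.

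First I would run the chain rule, tracking only the factors of $i$ and $\pi$. Regarding $W_{\mathbf n}$ as a polynomial in $X$ whose coefficients are polynomials over $\mathbb Q$ in $\wp(r+s\tau;\tau)$, $\wp'(r+s\tau;\tau)$ and $e_1,e_2,e_3$, and using $\partial_r Z_{r,s}=-(\wp+\eta_1)$, $\partial_r\wp=\wp'$, $\partial_r\wp'=6\wp^2-\tfrac{g_2}2$, one finds that $\partial_r f$ and $W_X$ lie in $\mathbb Q[\eta_1,e_1,e_2,e_3][Z,\wp,\wp']$ and carry \emph{no} factor of $i$ or $\pi$. By contrast, every $\tau$-derivative formula (\ref{fc-21})--(\ref{fc-22-1}), together with $\tfrac{d}{d\tau}e_k=(\partial_\tau\wp)(\tfrac{\omega_k}2;\tau)$ obtained from (\ref{eq-deri-wp}), carries the common factor $\tfrac{i}{4\pi}$; hence $\partial_\tau f=\tfrac{i}{4\pi}P$ with $P\in\mathbb Q[\eta_1,e_1,e_2,e_3][Z,\wp,\wp']$ free of $i,\pi$. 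This isolates all transcendental constants into the single prefactor $\tfrac{i}{4\pi}$.

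The key structural step is to parametrize $Z$, $\wp$, $\wp'$ along $\{f=0\}$ by the spectral curve. There $(r,s)$ is the monodromy datum of some $H(\mathbf n,B,\tau)$ with $Q_{\mathbf n}(B;\tau)\neq0$, and by (\ref{eq-affine}) the corresponding $\boldsymbol a$ gives a point $(B,C)$ with $C^2=Q_{\mathbf n}(B;\tau)$. The hyperelliptic involution $\boldsymbol a\mapsto-\boldsymbol a$ is $(r,s)\mapsto(-r,-s)$, $C\mapsto-C$, and it fixes $\wp=\wp(r+s\tau;\tau)$ while negating $\wp'$ and $Z=Z_{r,s}(\tau)$; hence
\[\wp=A_0(B;\tau),\qquad \wp'=C\,A_1(B;\tau),\qquad Z=C\,A_2(B;\tau)\]
with $A_0,A_1,A_2$ rational in $B$, the $n=1,2$ formulas (\ref{fc-26})--(\ref{fc-28}) being the prototype. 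The \emph{main obstacle} is to show that the coefficients of $A_0,A_1,A_2$ lie in $\mathbb Q[e_1,e_2,e_3]$: the parities are immediate, but the coefficient field requires that $Q_{\mathbf n}$ and the functions $\wp(\sigma_{\mathbf n}),\wp'(\sigma_{\mathbf n}),\mathbf z_{\mathbf n}$ from Section~2 be defined over $\mathbb Q[e_1,e_2,e_3]$, i.e. $Q_{\mathbf n}\in\mathbb Q[e_1,e_2,e_3][B]$ and the algebraic relations realizing $\overline{Y_{\mathbf n}(\tau)}$ as $\{C^2=Q_{\mathbf n}(B;\tau)\}$ have coefficients in $\mathbb Q[e_1,e_2,e_3]$. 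This is exactly the place where algebro-geometric input (not the Painlev\'e~VI machinery) is needed, and where I expect the real work to lie.

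Granting the parametrization, the rest is parity bookkeeping. By the weight-homogeneity of $W_{\mathbf n}$ in Theorem~\ref{thm-5A}, every monomial $X^a\wp^b(\wp')^{c}\mu$ (with $\mu$ a monomial in $e_1,e_2,e_3$, necessarily of even weight) has weight $a+2b+3c+\operatorname{wt}(\mu)\equiv a+c\pmod 2$, so all monomials of $W_{\mathbf n}$ share the parity $a+c\equiv d\pmod2$, where $d:=\tfrac12\sum_k n_k(n_k+1)$; hence $W_{\mathbf n}\mapsto(-1)^dW_{\mathbf n}$ under $(X,\wp')\mapsto(-X,-\wp')$, whence $f$ has $C$-parity $(-1)^d$, while $\partial_rf$ and $W_X$ are $C$-odd of parity $(-1)^{d+1}$ and $P$ is $C$-even of parity $(-1)^d$. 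Substituting the parametrization and reducing by $C^2=Q_{\mathbf n}(B;\tau)$, the $C$-odd element $\partial_rf/P$ equals $C\,D(B;\tau)$ and the $C$-even element $W_X/\partial_rf$ equals $E(B;\tau)$ with $D,E\in\mathbb Q[\eta_1,e_1,e_2,e_3](B)$. Using $\tfrac1i=-i$ and $C^2=Q_{\mathbf n}$,
\[\tau_r=-\frac{4\pi}{i}\,CD=\frac{\pi i}{C}\big(4Q_{\mathbf n}(B;\tau)D\big),\qquad \frac{\tau_s}{\tau_r}=\tau+2\pi i\,E=\tau+\pi i\,(2E),\]
so $R_0:=4Q_{\mathbf n}D$ and $R_1:=2E$ do the job, both in $\mathbb Q[\eta_1,e_1,e_2,e_3](B)$. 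Finally, for the Lam\'e case $\mathbf n=(n,0,0,0)$ the potential is invariant under the permutations of $e_1,e_2,e_3$, so $Q_{\mathbf n}$ and $W_{\mathbf n}$ involve the $e_k$ only through $g_2,g_3$, giving $R_0,R_1\in\mathbb Q[\eta_1,g_2,g_3](B)$ as claimed in (\ref{eq-ex-llame}).
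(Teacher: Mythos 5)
Your skeleton is exactly the paper's: implicit differentiation of $Z^{\mathbf n}_{r,s}(\tau)=W_{\mathbf n}(Z_{r,s}(\tau);r+s\tau,\tau)$, isolation of the transcendental constants into a single prefactor $\tfrac{i}{4\pi}$ carried by all $\tau$-derivative formulas (\ref{fc-21})--(\ref{fc-22-1}) and (\ref{derivativek1}), substitution of a spectral-curve parametrization $\wp=\tilde R_2(B)$, $\wp'=C\tilde R_3(B)$, $Z=C\tilde R_1(B)$, and a parity argument in $C$ to kill the wrong-parity part. The step you flag as the ``main obstacle'' --- that this parametrization has coefficients in $\mathbb{Q}[e_1,e_2,e_3](B)$ --- is not something the paper proves either: it is precisely Takemura's Hermite--Krichever Ansatz result, \cite[Theorem 2.3]{Takemura4}, which the paper simply cites; adding that citation closes your gap, so you correctly identified where the genuine input lies, and no new work is in fact needed there. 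Two smaller divergences are worth noting. First, your mechanism for eliminating the even part of $\tau_r/(\pi i)$ (and the odd part of $\tau_s/\tau_r$) is the weight-homogeneity of $W_{\mathbf n}$ from Theorem~\ref{thm-5A}-(1), which forces $W_{\mathbf n}\mapsto(-1)^dW_{\mathbf n}$ under $(X,\wp')\mapsto(-X,-\wp')$; the paper instead uses the functional symmetry $\tau(r,s)=\tau(-r,-s)$ together with the correspondence $(r,s)\leftrightarrow C$, $(-r,-s)\leftrightarrow -C$, to get $\tau_r(r,s)=-\tau_r(-r,-s)$ and hence $\hat R_0\equiv 0$. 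Both are correct; yours is more algebraic and works at the level of the polynomial before restricting to the zero locus, the paper's is shorter and needs no bookkeeping of monomial weights. Second, for the Lam\'e refinement (\ref{eq-ex-llame}) the paper quotes \cite{LW2} (where $Z^{(n,0,0,0)}_{r,s}\in\mathbb{Q}[g_2,g_3,\wp,\wp'][Z]$ is established) and the corresponding strengthening $\tilde R_j\in\mathbb{Q}[g_2,g_3](B)$ of Takemura's theorem, whereas you appeal to invariance of the Lam\'e potential under permutations of $e_1,e_2,e_3$; that $S_3$-argument is plausible but incomplete as stated, since it presupposes that the constructions of $Q_{\mathbf n}$ and $W_{\mathbf n}$ are equivariant under permuting the half-periods, which you would still have to verify (or replace by the citation, as the paper does).
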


\begin{proof}
By $e_k(\tau)=\wp(\frac{\omega_k}{2};\tau)$ and (\ref{eq-deri-wp}) we have
\begin{equation}\label{derivativek1}
e_{k}^{\prime}(\tau)=\frac{i}{\pi}\Big[  \frac{1}{6}g_{2}(\tau)+\eta
_{1}(\tau)e_{k}(\tau)-e_{k}(\tau)^{2}\Big].
\end{equation}
Recall Theorem \ref{thm-5A}-(3) and (\ref{kk-ll}) that
\begin{align*}Z_{r,s}^{\mathbf{n}}(\tau)=&W_{\mathbf{n}}(Z_{r,s}(\tau);r+s\tau,\tau)\\
\in &\mathbb{Q}[e_1(\tau),e_2(\tau
),e_3(\tau),\wp(r+s\tau;\tau),\wp^{\prime}(r+s\tau;\tau)][Z_{r,s}(\tau)].\end{align*}
Clearly it follows from
\[\frac{\partial \wp(r+s\tau;\tau)}{\partial r}=\wp', \;\frac{\partial \wp'(r+s\tau;\tau)}{\partial r}=\wp''=6\wp^2-g_2/2,\] \[\frac{\partial Z_{r,s}(\tau)}{\partial r}=-\wp-\eta_1,\]
that
\begin{align*}
\frac{\partial Z_{r,s}^{\mathbf{n}}(\tau)}{\partial r}\in \mathbb{Q}[e_1,e_2,e_3,\eta_1,\wp,\wp^{\prime}][Z_{r,s}(\tau)].
\end{align*}
Similarly we see from (\ref{fc-21})-(\ref{fc-22-1}) and (\ref{derivativek1}) that
\begin{align*}
\frac{\partial Z_{r,s}^{\mathbf{n}}(\tau)}{\partial \tau}\in \frac{i}{\pi}\times\mathbb{Q}[e_1,e_2,e_3,\eta_1,\wp,\wp^{\prime}][Z_{r,s}(\tau)],
\end{align*}
and so
\begin{equation}\label{eq-ex-taur}\tau_r=-\frac{\frac{\partial Z_{r,s}^{\mathbf{n}}(\tau)}{\partial r}}{\frac{\partial Z_{r,s}^{\mathbf{n}}(\tau)}{\partial \tau}}\in i\pi\times\mathbb{Q}[e_1,e_2,e_3,\eta_1,\wp,\wp^{\prime}](Z_{r,s}(\tau)).\end{equation}

On the other hand, it was proved by Takemura \cite[Theorem 2.3]{Takemura4} that there are
rational functions
\begin{equation}\label{eq-ex-llame1}\tilde{R}_1(B),\tilde{R}_2(B),\tilde{R}_3(B)\in \mathbb{Q}[e_1,e_2,e_3](B)\end{equation}
such that
\begin{equation}\label{eq-ex-zn}Z_{r,s}(\tau)=C \,\tilde{R}_1(B),\; \wp'(r+s\tau;\tau)=C\,\tilde{R}_3(B),\; \wp(r+s\tau;\tau)=\tilde{R}_2(B).\end{equation}
Inserting these into (\ref{eq-ex-taur}) leads to the existence of rational functions $R_0(B)=R_0(B;\tau)$, $\hat{R}_0(B)=\hat{R}_0(B;\tau)\in \mathbb{Q}[e_1,e_2,e_3,\eta_1](B)$ such that (note $C^2=Q_{\mathbf{n}}(B)$)
\[\frac{\tau_r}{\pi i}=\frac{R_0(B)}{C}+\hat{R}_0(B).\]
Since $(\tau, B)\to \pm (r,s)$ leads to $\tau(r,s)=\tau(-r,-s)$ and
\begin{equation}\label{ffrr}(r,s)\longleftrightarrow C,\quad (-r,-s)\longleftrightarrow -C \quad (\text{see} (\ref{eq-ex-zn})),\end{equation}
we have $\tau_r(r,s)=-\tau_r(-r,-s)$, i.e.
\[\frac{R_0(B)}{C}+\hat{R}_0(B)=-\Big(\frac{R_0(B)}{-C}+\hat{R}_0(B)\Big),\]
 and so $\hat{R}_0(B)\equiv 0$. This proves
\[\frac{\tau_r}{\pi i}=\frac{R_0(B)}{C}.\]

Similarly, we recall (\ref{eq-ex-taurs}) that
\[\frac{\tau_s}{\tau_r}=\frac{\frac{\partial Z_{r,s}^{\mathbf{n}}(\tau)}{\partial s}}{\frac{\partial Z_{r,s}^{\mathbf{n}}(\tau)}{\partial r}}=\tau +\pi i \frac{2\frac{\partial{W_{\mathbf{n}}}}{\partial X}(Z_{r,s}(\tau);r+s\tau,\tau)}{\frac{\partial Z_{r,s}^{\mathbf{n}}(\tau)}{\partial r}},\]
with
\begin{align*}
\frac{2\frac{\partial{W_{\mathbf{n}}}}{\partial X}(Z_{r,s}(\tau);r+s\tau,\tau)}{\frac{\partial Z_{r,s}^{\mathbf{n}}(\tau)}{\partial r}}\in \mathbb{Q}[e_1,e_2,e_3,\eta_1,\wp,\wp^{\prime}](Z_{r,s}(\tau)),
\end{align*}
so there are rational functions $R_1(B)=R_1(B;\tau)$, $\hat{R}_1(B)=\hat{R}_1(B;\tau)\in \mathbb{Q}$ $ [e_1,e_2,e_3,\eta_1](B)$ such that
\[\frac{2\frac{\partial{W_{\mathbf{n}}}}{\partial X}(Z_{r,s}(\tau);r+s\tau,\tau)}{\frac{\partial Z_{r,s}^{\mathbf{n}}(\tau)}{\partial r}}
=\frac{\hat{R}_1(B)}{C}+R_1(B).\]
By $\frac{\tau_s}{\tau_r}(-r,-s)=\frac{\tau_s}{\tau_r}(r,s)$ and \eqref{ffrr} we obtain $\hat{R}_1(B)\equiv 0$, so we have
\[\frac{\tau_s}{\tau_r}=\tau+\pi i R_1(B).\]

Finally, for the Lam\'{e} case $\mathbf{n}=(n,0,0,0)$, it was proved in \cite{LW2} that
\begin{align*}Z_{r,s}^{\mathbf{n}}(\tau)
\in \mathbb{Q}[g_2(\tau),g_3(\tau
),\wp(r+s\tau;\tau),\wp^{\prime}(r+s\tau;\tau)][Z_{r,s}(\tau)],\end{align*}
and (\ref{eq-ex-llame1}) can be improved as
\[\tilde{R}_1(B),\tilde{R}_2(B),\tilde{R}_3(B)\in \mathbb{Q}[g_2,g_3](B).\]
Therefore, the above argument actually implies (\ref{eq-ex-llame}).
This completes the proof.
\end{proof}

\section{Proof of Theorem \ref{thm-rsbtau}: An induction approach}

\label{sec-induction}

This section is devoted to the proof of Theorem \ref{thm-rsbtau} for general $\mathbf{n}$, which can not be proved by direct computations due to the lack of explicit expressions of $Z_{r,s}^{\mathbf{n}}(\tau)$, and new ideas are needed. We will introduce an induction approach to overcome this difficulty.

\subsection{The linearized equation of EPVI}

Fix any $\mathbf{n}$. Recall Lemma \ref{lem-2.3} that $\wp(p_{r,s}^{\mathbf{n}}(\tau);\tau)$ depends meromorphically on $(r,s)\in\mathbb{C}^2\setminus\frac12\mathbb{Z}^2$.
Thus
\begin{equation}
Y_{1;r,s}^{\mathbf{n}}(\tau):=\frac{\partial p_{r,s}^{\mathbf{n}}(\tau)}{\partial r},\quad
Y_{2;r,s}^{\mathbf{n}}(\tau):=\frac{\partial p_{r,s}^{\mathbf{n}}(\tau)}{\partial s}
\end{equation}
are well-defined and solve the linearized equation of EPVI$_{\mathbf{n}}$ as functions of $\tau$:
\begin{equation}
\frac{d^{2}}{d\tau^{2}}Y(\tau)=\bigg[\frac{-1}{8\pi^{2}}\sum_{k=0}^{3}(n_k+\tfrac12)^2
\wp''\left( p_{r,s}^{\mathbf{n}}(\tau)+\tfrac{\omega_{k}}{2};
\tau \right)\bigg] Y(\tau) . \label{linear-124}%
\end{equation}
Consequently, the Wronskian
\begin{equation}
W_{\mathbf{n}}(r,s):=\frac{dY_{1;r,s}^{\mathbf{n}}(\tau)}{d\tau}Y_{2;r,s}(\tau)
-\frac{dY_{2;r,s}^{\mathbf{n}}(\tau)}{d\tau}Y_{1;r,s}(\tau)
\end{equation}
is independent of $\tau$ and meromorphic in $(r,s)\in\mathbb{C}^2\setminus\frac12\mathbb{Z}^2$.

The following result is the key observation of proving Theorem \ref{thm-rsbtau}.

\begin{lemma}\label{lem-6.1} Fix $\mathbf{n}$, the following assertions are equivalent.
\begin{itemize}
\item[(1)] $W_{\mathbf{n}}(r,s)\equiv -1$.
\item[(2)]
For $\mathbf{n}_0^{-}=(n_0-1,n_1,n_2,n_3)$ with $n_0\geq 1$, the map $\varphi_{\mathbf{n}_{0}^{-1}}: (\tau, B)\mapsto (r,s)$ satisfies
\begin{equation}d\tau\wedge dB=8\pi^2 dr\wedge ds.\label{fc-43}\end{equation}
\item[(3)] For $\mathbf{n}_0^{+}=(n_0+1,n_1,n_2,n_3)$, the map $\varphi_{\mathbf{n}_0^{+}}: (\tau,B)\to (r,s)$ satisfies (\ref{fc-43}).
\end{itemize}
\end{lemma}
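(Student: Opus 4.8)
The plan is to compute the Jacobian of the (local) inverse of $\varphi_{\mathbf{n}_0^+}$ directly and identify it with the Wronskian $W_{\mathbf{n}}(r,s)$. Fix $(r,s)\in\mathbb{C}^2\setminus\tfrac12\mathbb{Z}^2$ together with a point $\tau_0$ where $p_{r,s}^{\mathbf{n}}(\tau_0)=0$. By Theorem \ref{thm-pole-z}(1), along the branch $c_{n_0}^2=-i\tfrac{2n_0+1}{2\pi}$ such a zero produces the value $B_0$ given by \eqref{kB} for which the monodromy data of H$(\mathbf{n}_0^+,B_0,\tau_0)$ equals $(r,s)$. By Lemma \ref{lem-3.1}, $p_{r,s}^{\mathbf{n}}$ belongs to the family $p_h^{\mathbf{n}}$, which depends on the base point $\tau_0=\tau_0(r,s)$ and the parameter $h=h(r,s)$; hence the local inverse of $\varphi_{\mathbf{n}_0^+}$ is the holomorphic map $(r,s)\mapsto(\tau_0(r,s),B(r,s))$, and \eqref{fc-43} for $\mathbf{n}_0^+$ is the scalar identity $\tau_{0,r}B_s-\tau_{0,s}B_r=8\pi^2$. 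I would therefore prove the lemma by establishing the pointwise identity $\tau_{0,r}B_s-\tau_{0,s}B_r=-8\pi^2W_{\mathbf{n}}(r,s)$.

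For the first reduction I rewrite \eqref{kB} as $B=-8\pi^2c_{n_0}^2\,h+G(\tau_0)$, where $G(\tau):=-2\pi ic_{n_0}^2\eta_1(\tau)-\sum_{j=1}^3 n_j(n_j+1)e_j(\tau)$ gathers all terms depending only on $\tau_0$. Differentiating in $r$ and $s$ and forming $\tau_{0,r}B_s-\tau_{0,s}B_r$, the contributions of $G'(\tau_0)$ cancel (their coefficient is $\tau_{0,r}\tau_{0,s}-\tau_{0,s}\tau_{0,r}=0$), leaving $\tau_{0,r}B_s-\tau_{0,s}B_r=-8\pi^2c_{n_0}^2(\tau_{0,r}h_s-\tau_{0,s}h_r)$. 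The problem thus reduces to identifying the Jacobian $\tau_{0,r}h_s-\tau_{0,s}h_r$ of the initial-data map $(r,s)\mapsto(\tau_0,h)$ with the Wronskian.

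This is the heart of the argument. Writing $W(f,g):=\dot f\,g-\dot g\,f$ for the (constant) Wronskian pairing of solutions of \eqref{linear-124}, so that $W_{\mathbf{n}}=W(Y^{\mathbf{n}}_{1;r,s},Y^{\mathbf{n}}_{2;r,s})$, I regard $p_h^{\mathbf{n}}$ as a family in $(\tau_0,h)$. Then $A:=\partial_{\tau_0}p_h^{\mathbf{n}}$ and $\mathcal{B}:=\partial_h p_h^{\mathbf{n}}$ both solve \eqref{linear-124}, and the chain rule gives $Y^{\mathbf{n}}_{1;r,s}=A\,\tau_{0,r}+\mathcal{B}\,h_r$ and $Y^{\mathbf{n}}_{2;r,s}=A\,\tau_{0,s}+\mathcal{B}\,h_s$; by bilinearity and antisymmetry, $W_{\mathbf{n}}(r,s)=(\tau_{0,r}h_s-\tau_{0,s}h_r)\,W(A,\mathcal{B})$. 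It remains to compute the constant $W_0:=W(A,\mathcal{B})$. Inserting the expansion \eqref{515-5} with $u:=\tau-\tau_0$ yields the leading behaviors $A\sim-\tfrac12 c_{n_0}u^{-1/2}$ and $\mathcal{B}\sim c_{n_0}u^{3/2}$, the two Frobenius solutions at the regular singular point $\tau_0$ of \eqref{linear-124}; since $\dot A\mathcal{B}-\dot{\mathcal{B}}A$ has no negative powers of $u$ and $W_0$ is $\tau$-independent, reading its $u^0$ coefficient gives $W_0=\tfrac14 c_{n_0}^2+\tfrac34 c_{n_0}^2=c_{n_0}^2$. Hence $W_{\mathbf{n}}(r,s)=c_{n_0}^2(\tau_{0,r}h_s-\tau_{0,s}h_r)$, and combining with the previous paragraph gives $\tau_{0,r}B_s-\tau_{0,s}B_r=-8\pi^2W_{\mathbf{n}}(r,s)$, so \eqref{fc-43} for $\mathbf{n}_0^+$ holds iff $W_{\mathbf{n}}\equiv-1$; this is (1)$\Leftrightarrow$(3). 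The implication (1)$\Leftrightarrow$(2) is identical, run along the other branch $c_{n_0}^2=i\tfrac{2n_0+1}{2\pi}$ via Theorem \ref{thm-pole-z}(2) and \eqref{kB} for $\mathbf{n}_0^-$: the same $W_0=c_{n_0}^2$ and the same cancellation reproduce $\tau_{0,r}B_s-\tau_{0,s}B_r=-8\pi^2W_{\mathbf{n}}$.

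The main obstacle I anticipate is the careful handling of the square-root branch point: $p_{r,s}^{\mathbf{n}}$ is only locally two-valued near $\tau_0$, so I must justify that $\tau_0(r,s)$ and $h(r,s)$ are genuine holomorphic functions of $(r,s)$ before differentiating. I would secure this from the simple zero property (Theorem \ref{thm-simplezero}) applied to $Z_{r,s}^{\mathbf{n}_0^+}$: the \emph{single-valued} meromorphic function $\wp(p_{r,s}^{\mathbf{n}}(\tau);\tau)$ of Lemma \ref{lem-2.3} has the Laurent expansion $\tfrac{1}{c_{n_0}^2(\tau-\tau_0)}-\tfrac{2h}{c_{n_0}^2}+O(\tau-\tau_0)$, whose pole is simple precisely because the zero of $Z_{r,s}^{\mathbf{n}_0^+}$ is simple; the implicit function theorem then makes both the pole location $\tau_0(r,s)$ and the constant coefficient (hence $h(r,s)$) holomorphic in $(r,s)$. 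A secondary point is that the divergent $u^{-2}$ and $u^{-1}$ terms in $\dot Y_1Y_2-\dot Y_2Y_1$ must cancel; this is automatic from $\tau$-independence of $W_{\mathbf{n}}$ and serves as a useful consistency check on the expansion.
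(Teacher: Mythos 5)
Your proposal is correct and takes essentially the same route as the paper's own proof: both reduce the lemma to the pointwise identity $\tau_{r}B_{s}-\tau_{s}B_{r}=-8\pi^{2}W_{\mathbf{n}}(r,s)$, obtained from the branch-point expansion of Lemma \ref{lem-3.1}, the correspondence of Theorems \ref{thm-pole-z}/\ref{thm-pole-z1}, holomorphy of $(\tau_0(r,s),h(r,s))$ via Theorem \ref{thm-simplezero} and Lemma \ref{lem-2.3}, the cancellation of the $\eta_1,e_j$ terms in \eqref{kB}, and a Wronskian computation at the square-root branch point, your bilinear factorization $W_{\mathbf{n}}=(\tau_{0,r}h_{s}-\tau_{0,s}h_{r})\,W(A,\mathcal{B})$ with $W(A,\mathcal{B})=c_{n_0}^{2}$ being exactly the paper's expansion of $Y_{1;r,s}^{\mathbf{n}},Y_{2;r,s}^{\mathbf{n}}$ repackaged. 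Two cosmetic caveats: the pole of $\wp(p_{r,s}^{\mathbf{n}}(\tau);\tau)$ is simple automatically from \eqref{515-5} (not "because" the zero of $Z_{r,s}^{\mathbf{n}_0^{+}}$ is simple, which is the deeper fact of Theorem \ref{thm-simplezero} that you rightly use for holomorphy of $\tau_0(r,s)$), and for (1)$\Rightarrow$(3) you should cite Theorem \ref{thm-pole-z1} explicitly so that every point of $\Sigma_{\mathbf{n}_0^{+}}$ arises from a zero of some $p_{r,s}^{\mathbf{n}}$.
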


\begin{proof}
(1)$\Leftrightarrow$(2).
Fix any $(\tau_0,B_0)$ satisfying $Q_{\mathbf{n}_0^-}(B_0;\tau_0)\neq 0$ and let $(r_0,s_0)=\varphi_{\mathbf{n}_0^{-}}(\tau_0,B_0)\notin \frac12\mathbb{Z}^2$ be the monodromy data of H$(\mathbf{n}_0^{-},B_0,\tau_0)$, i.e. $Z_{r_0,s_0}^{\mathbf{n}_0^-}(\tau_0)=0$. Then there is a small open neighborhood $U\subset \mathbb{C}^2\setminus\frac12\mathbb{Z}^2$ of $(r_0,s_0)$ such that $Z_{r,s}^{\mathbf{n}_0^-}(\cdot)$ has a unique zero $\tau=\tau(r,s)$ in a small open neighborhood $V\subset \mathbb{H}$ of $\tau_0$ for any $(r,s)\in U$. Let $B=B(r,s)$ be the unique $B$ such that $(r,s)=\varphi_{\mathbf{n}_0^{-}}(\tau(r,s),B(r,s))$ is the monodromy data of H$(\mathbf{n}_0^-,B(r,s),\tau(r,s))$.
Then by Theorem \ref{thm-pole-z1}-(1) and Lemma \ref{lem-3.1}, we have
\begin{align}
p_{r,s}^{\mathbf{n}}(\tilde{\tau})=&c_{n_0}(\tilde{\tau}-\tau(r,s))^{\frac{1}{2}}\nonumber\\
&[1+h(\tilde{\tau}-\tau(r,s))+a(\tilde{\tau}-\tau(r,s))^{2}+O((\tilde{\tau}-\tau(r,s))^3)] \label{fc-44}
\end{align}
for $\tilde{\tau}$ near $\tau(r,s)$,
where
\begin{equation}\label{fc-46}
h=h(r,s)=\frac{B(r,s)+\vartheta_{\mathbf{n}}(\tau(r,s))}{-8\pi^2 c_{n_0}^2},
\end{equation}
\[
\vartheta_{\mathbf{n}}(\tau):=2\pi ic_{n_0}^{2}\eta_{1}(\tau)+
-\sum_{j=1}^{3}n_{j}(n_{j}+1)e_{j}(\tau)\quad\text{with }c_{n_0}^2=i\frac{2n_{0}+1}{2\pi}.
\]
Here and following, we use $\tilde{\tau}$ to denote the variable of $p_{r,s}^{\mathbf{n}}(\cdot)$ and $\tau=\tau(r,s)$ to denote the zero of $Z_{r,s}^{\mathbf{n}_0^-}(\cdot)$.

Consequently,
\begin{equation}\label{fc-45}\wp(p_{r,s}^{\mathbf{n}}(\tilde{\tau});\tilde{\tau})=\frac{1}{c_{n_0}^2
(\tilde{\tau}-\tau(r,s))}-\frac{2h}{c_{n_0}^2}+O((\tilde{\tau}-\tau(r,s))).\end{equation}
Since Lemma \ref{lem-2.3} says that $\wp(p_{r,s}^{\mathbf{n}}(\tilde{\tau});\tilde{\tau})$ depends meromorphically on $(r,s)\in U$, and $\tau(r,s)$, as a simple zero of $Z_{r,s}^{\mathbf{n}_0^-}(\cdot)$, is holomorphic in $(r,s)\in U$, we easily see from (\ref{fc-45}) that $h=h(r,s)$ is also holomorphic in $(r,s)\in U$ and so does $B=B(r,s)$ by (\ref{fc-46}), and
\begin{equation}\label{fc-47}h_r=\frac{B_r+\vartheta_{\mathbf{n}}'(\tau)\tau_r}{-8\pi^2 c_{n_0}^2},\quad h_s=\frac{B_s+\vartheta_{\mathbf{n}}'(\tau)\tau_s}{-8\pi^2 c_{n_0}^2}.\end{equation}

For $\tilde{\tau}$ near $\tau=\tau(r,s)$, it follows from (\ref{fc-44}) that
\begin{align*}
Y_{1;r,s}^{\mathbf{n}}(\tilde{\tau})&=\frac{\partial p_{r,s}^{\mathbf{n}}(\tilde\tau)}{\partial r}=-\tfrac{c_{n_0}}{2}\tau_r(\tilde{\tau}-\tau)^{-\frac{1}{2}}
-\tfrac{3c_{n_0}h}{2}\tau_r(\tilde{\tau}-\tau)^{\frac{1}{2}}\\
&+(c_{n_0}h_r-\tfrac{5c_{n_0}a}{2}\tau_r)(\tilde{\tau}-\tau)^{\frac{3}{2}}
+O((\tilde{\tau}-\tau)^{\frac{5}{2}}),
\end{align*}
\begin{align*}
Y_{2;r,s}^{\mathbf{n}}(\tilde{\tau})&=\frac{\partial p_{r,s}^{\mathbf{n}}(\tilde\tau)}{\partial s}=-\tfrac{c_{n_0}}{2}\tau_s(\tilde{\tau}-\tau)^{-\frac{1}{2}}
-\tfrac{3c_{n_0}h}{2}\tau_s(\tilde{\tau}-\tau)^{\frac{1}{2}}\\
&+(c_{n_0}h_s-\tfrac{5c_{n_0}a}{2}\tau_s)(\tilde{\tau}-\tau)^{\frac{3}{2}}
+O((\tilde{\tau}-\tau)^{\frac{5}{2}}),
\end{align*}
and so
\begin{align*}
\frac{d}{d\tilde\tau}Y_{1;r,s}^{\mathbf{n}}(\tilde{\tau})&=\tfrac{c_{n_0}}{4}\tau_r(\tilde{\tau}-\tau)^{-\frac{3}{2}}
-\tfrac{3c_{n_0}h}{4}\tau_r(\tilde{\tau}-\tau)^{-\frac{1}{2}}\\
&+\tfrac32(c_{n_0}h_r-\tfrac{5c_{n_0}a}{2}\tau_r)(\tilde{\tau}-\tau)^{\frac{1}{2}}
+O((\tilde{\tau}-\tau)^{\frac{3}{2}}),
\end{align*}
\begin{align*}
\frac{d}{d\tilde\tau}Y_{2;r,s}^{\mathbf{n}}(\tilde{\tau})&=\tfrac{c_{n_0}}{4}\tau_s(\tilde{\tau}-\tau)^{-\frac{3}{2}}
-\tfrac{3c_{n_0}h}{4}\tau_s(\tilde{\tau}-\tau)^{-\frac{1}{2}}\\
&+\tfrac32(c_{n_0}h_s-\tfrac{5c_{n_0}a}{2}\tau_s)(\tilde{\tau}-\tau)^{\frac{2}{2}}
+O((\tilde{\tau}-\tau)^{\frac{3}{2}}).
\end{align*}
From here and the Wronskian $W_{\mathbf{n}}(r,s)=\frac{dY_{1;r,s}^{\mathbf{n}}(\tilde\tau)}{d\tilde\tau}Y_{2;r,s}(\tilde\tau)
-\frac{dY_{2;r,s}^{\mathbf{n}}(\tilde\tau)}{d\tilde\tau}Y_{1;r,s}(\tilde\tau)$ is independent of the variable $\tilde{\tau}$, a direct computation gives
\begin{align}\label{fc-49}
W_{\mathbf{n}}(r,s)=c_{n_0}^2(\tau_r h_s-\tau_s h_r)=\frac{\tau_rB_s-\tau_sB_r}{-8\pi^2},
\quad \text{for }\; (r,s)\in U,\end{align}
where we used (\ref{fc-47}) to obtain the second inequality.

Now if (\ref{fc-43}) holds, we have $\tau_rB_s-\tau_sB_r=8\pi^2$, so $W_{\mathbf{n}}(r,s)\equiv -1$ for $(r,s)\in U$ and hence for all $(r,s)\in \mathbb{C}^2\setminus\frac12\mathbb{Z}^2$, because $W_{\mathbf{n}}(r,s)$ is meromorphic in $(r,s)\in \mathbb{C}^2\setminus\frac12\mathbb{Z}^2$. This proves (2)$\Rightarrow$(1).

Conversely,
if $W_{\mathbf{n}}(r,s)\equiv -1$, (\ref{fc-49}) gives $\tau_r B_s-\tau_s B_r=8\pi^2$ for any $(r,s)\in U$, namely (\ref{fc-43}) holds for $(\tau,B)$ in a small neighborhood of $(\tau_0, B_0)$. Since $(\tau_0, B_0)$ is arbitrary, we conclude (\ref{fc-43}) holds for $\varphi_{\mathbf{n}_0^-}$. This proves (1)$\Rightarrow$(2).

(1)$\Leftrightarrow$(3). Again
fix any $(\tau_0,B_0)$ satisfying $Q_{\mathbf{n}_0^+}(B_0;\tau_0)\neq 0$ and let $(r_0,s_0)=\varphi_{\mathbf{n}_0^{+}}(\tau_0,B_0)\notin \frac12\mathbb{Z}^2$ to be the monodromy data of H$(\mathbf{n}_0^{+},B_0,\tau_0)$, i.e. $Z_{r_0,s_0}^{\mathbf{n}_0^+}(\tau_0)=0$.
Then there is a small open neighborhood $U\subset \mathbb{C}^2\setminus\frac12\mathbb{Z}^2$ of $(r_0,s_0)$ such that $Z_{r,s}^{\mathbf{n}_0^+}(\cdot)$ has a unique zero $\tau=\tau(r,s)$ in a small open neighborhood $V\subset \mathbb{H}$ of $\tau_0$ for any $(r,s)\in U$. Let $B=B(r,s)$ be the unique $B$ such that $(r,s)=\varphi_{\mathbf{n}_0^{+}}(\tau(r,s),B(r,s))$ is the monodromy data of H$(\mathbf{n}_0^+,B(r,s),\tau(r,s))$.

Now by Theorem \ref{thm-pole-z1}-(2) and Lemma \ref{lem-3.1}, we still have (\ref{fc-44})-(\ref{fc-46}), where the only different thing is $c_{n_0}^2=-i\frac{2n_{0}+1}{2\pi}$. Therefore, the same argument as \eqref{fc-49} implies
\[\tau_r B_s-\tau_s B_r=-8\pi^2 W_{\mathbf{n}}(r,s),\quad\text{for }\;(r,s)\in U.\]
The rest proof is the same as that of (1)$\Leftrightarrow$(2).
\end{proof}

\subsection{Proof of Theorem \ref{thm-rsbtau}}
Now we can prove Theorem \ref{thm-rsbtau} via an induction approach.

\begin{proof}[Proof of Theorem \ref{thm-rsbtau}] We prove via induction that for any $\mathbf{n}=(n_0,n_1,n_2,n_3)$ with $n_k\geq 0$ and $\max_k n_k\geq 1$,
\begin{equation}d\tau\wedge dB=8\pi^2 dr\wedge ds\label{fc-53}\end{equation}
holds for $\varphi_{\mathbf{n}}: (\tau, B)\to (r,s)$.

{\bf Step 1.} We prove (\ref{fc-53})
for the Lam\'{e} case $\mathbf{n}=(n,0,0,0)$, $n\geq 1$.

In Section 5, we have proved (\ref{fc-53}) for $n=1,2$. From here and Lemma \ref{lem-6.1}, we easily conclude via induction that (\ref{fc-53}) holds for all $n\geq 1$. Furthermore,
\begin{equation}\label{fc-54}
W_{(n,0,0,0)}(r,s)\equiv -1,\quad\text{for any}\; n\geq 0.
\end{equation}

{\bf Step 2.} We prove (\ref{fc-53})
for $\mathbf{n}=(n_0,n_1,0,0)$ with $n_1\geq 1$, $n_0\geq 0$.

Since $y(z)$ solves H$((n_1,0,0,0),B,\tau)$
\[y''(z)=[n_1(n_1+1)\wp(z;\tau)+B]y(z)\]
if and only if $\tilde{y}(z):=y(z+\frac{\omega_1}{2})$ solves H$((0,n_1,0,0),B,\tau)$
\[y''(z)=[n_1(n_1+1)\wp(z+\tfrac{\omega_1}{2};\tau)+B]y(z),\]
so $\varphi_{(0,n_1,0,0)}=\varphi_{(n_1,0,0,0)}$. Together with Step 1, we obtain that (\ref{fc-53}) holds for $(0,n_1,0,0)$, $n_1\geq 1$.

On the other hand, Lemma \ref{lem-3.4} says $p_{r,s}^{(0,n_1,0,0)}(\tau)-\frac{\omega_1}{2}=p_{r+\frac12, s}^{(n_1,0,0,0)}(\tau)$, which implies
\[Y_{1;r,s}^{{(0,n_1,0,0)}}(\tau)=Y_{1;r+\frac12,s}^{{(n_1,0,0,0)}}(\tau),\quad
Y_{2;r,s}^{(0,n_1,0,0)}(\tau)=Y_{2;r+\frac12,s}^{{(n_1,0,0,0)}}(\tau).\]
 From here and (\ref{fc-54}), we obtain
\begin{equation*}
W_{(0,n_1,0,0)}(r,s)=W_{(n_1,0,0,0)}(r+\tfrac12, s)\equiv -1.
\end{equation*}
This together with Lemma \ref{lem-6.1}-(3) implies that (\ref{fc-53}) holds for $(1,n_1,0,0)$. From here and (\ref{fc-53}) holding for $(0,n_1,0,0)$, we easily conclude from Lemma \ref{lem-6.1} that (\ref{fc-53}) holds for all $(n_0,n_1,0,0)$. Furthermore,
\begin{equation}\label{fc-56}
W_{(n_0,n_1,0,0)}(r,s)\equiv -1,\quad\text{for any}\; n_0,n_1\geq 0.
\end{equation}
Clearly the similar argument implies that (\ref{fc-53}) holds
for both $\mathbf{n}=(n_0,0,n_2,0)$ with $n_2\geq 1$ and $\mathbf{n}=(n_0,0,0,n_3)$ with $n_3\geq 1$, and
\begin{equation}\label{fc-56-1}
W_{(n_0,0,n_2,0)}(r,s)=W_{(n_0,0,0,n_3)}(r,s)\equiv -1,\quad\text{for any}\; n_0,n_2,n_3\geq 0.
\end{equation}

{\bf Step 3.} We prove (\ref{fc-53})
for $\mathbf{n}=(n_0,n_1,n_2,0)$ with $n_2\geq 1$, $n_0, n_1\geq 0$.

Since $y(z)$ solves H$((n_1,0,0,n_2),B,\tau)$
\[y''(z)=[n_1(n_1+1)\wp(z;\tau)+n_2(n_2+1)\wp(z+\tfrac{\omega_3}{2};\tau)+B]y(z)\]
if and only if $\tilde{y}(z):=y(z+\frac{\omega_1}{2})$ solves H$((0,n_1,n_2,0),B,\tau)$
\[y''(z)=[n_1(n_1+1)\wp(z+\tfrac{\omega_1}{2};\tau)+
n_2(n_2+1)\wp(z+\tfrac{\omega_2}{2};\tau)+B]y(z),\]
so $\varphi_{(0,n_1,n_2,0)}=\varphi_{(n_1,0,0,n_2)}$. Together with Step 2, we obtain that (\ref{fc-53}) holds for $(0,n_1,n_2,0)$, $n_2\geq 1$.

On the other hand, Lemma \ref{lem-3.4} says $p_{r,s}^{(0,n_1,n_2,0)}(\tau)-\frac{\omega_1}{2}=p_{r+\frac12, s}^{(n_1,0,0,n_2)}(\tau)$. From here and (\ref{fc-56-1}), we obtain
\begin{equation*}
W_{(0,n_1,n_2,0)}(r,s)=W_{(n_1,0,0,n_2)}(r+\tfrac12, s)\equiv -1.
\end{equation*}
This together with Lemma \ref{lem-6.1}-(3) implies that (\ref{fc-53}) holds for $(1,n_1,n_2,0)$. From here and (\ref{fc-53}) holding for $(0,n_1,n_2,0)$, we easily conclude from Lemma \ref{lem-6.1} that (\ref{fc-53}) holds for all $(n_0,n_1,n_2,0)$. Furthermore,
\begin{equation}\label{fc-57}
W_{(n_0,n_1,n_2,0)}(r,s)\equiv -1,\quad\text{for any}\; n_0,n_1,n_2\geq 0.
\end{equation}

{\bf Step 4.} We prove (\ref{fc-53})
for $\mathbf{n}=(n_0,n_1,n_2,n_3)$ with $n_3\geq 1$, $n_0, n_1, n_2\geq 0$.

Since $y(z)$ solves H$((n_3,n_2,n_1,0),B,\tau)$ if and only if $\tilde{y}(z):=y(z+\frac{\omega_3}{2})$ solves H$((0,n_1,n_2,n_3),B,\tau)$,
so $\varphi_{(0,n_1,n_2,n_3)}=\varphi_{(n_3,n_2,n_1,0)}$. Together with Step 3, we obtain that (\ref{fc-53}) holds for $(0,n_1,n_2,n_3)$, $n_3\geq 1$.

On the other hand, Lemma \ref{lem-3.4} says $p_{r,s}^{(0,n_1,n_2,n_3)}(\tau)-\frac{\omega_3}{2}=p_{r+\frac12, s+\frac12}^{(n_3,n_2,n_1,0)}(\tau)$. From here and (\ref{fc-57}), we obtain
\begin{equation*}
W_{(0,n_1,n_2,n_3)}(r,s)=W_{(n_3,n_2,n_1,0)}(r+\tfrac12, s+\tfrac12)\equiv -1.
\end{equation*}
This together with Lemma \ref{lem-6.1}-(3) implies that (\ref{fc-53}) holds for $(1,n_1,n_2,n_3)$. From here and (\ref{fc-53}) holding for $(0,n_1,n_2,n_3)$, we easily conclude from Lemma \ref{lem-6.1} that (\ref{fc-53}) holds for all $(n_0,n_1,n_2,n_3)$. Furthermore,
\begin{equation}\label{fc-58}
W_{(n_0,n_1,n_2,n_3)}(r,s)\equiv -1,\quad\text{for any}\; n_0,n_1,n_2,n_3\geq 0.
\end{equation}
This completes the proof.
\end{proof}

\section{Applications}

In this final section, we give an application of the universal law.
Define $\Delta_j(B)=\Delta_j(B;\tau)$ to be the trace of the monodromy matrix $\rho(\ell_j)$, i.e.
\begin{equation}\label{ex-eq-3}\Delta_1(B;\tau):=2\cos (2\pi s),\quad \Delta_2(B;\tau):=2\cos (2\pi r).
\end{equation}
It is well known that $\Delta_j(B;\tau)$ are \emph{holomorphic} in both $B$ and $\tau$.
\begin{lemma}\label{lem-7.1} For any $(\tau, B)\in \Sigma_{\mathbf{n}}$,
\begin{equation}\label{ex-eq-1}
\Delta_{1,B}:=\tfrac{\partial }{\partial B}\Delta_1=-\tfrac{1}{2\pi}\sin (2\pi s) \tau_r,
\end{equation}
\begin{equation}\label{ex-eq-2}
\Delta_{2,B}:=\tfrac{\partial }{\partial B}\Delta_2=\tfrac{1}{2\pi}\sin (2\pi r) \tau_s.
\end{equation}
\end{lemma}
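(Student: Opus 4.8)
The plan is to obtain both identities from the chain rule, re-expressing the $B$-derivatives of the monodromy data $(r,s)$ through the Jacobian of the local inverse of $\varphi_{\mathbf{n}}$, and then invoking the universal law of Theorem \ref{thm-rsbtau} to evaluate the relevant Jacobian determinant. First I would fix a point of $\Sigma_{\mathbf{n}}$ and, exactly as in the Remark preceding Theorem \ref{thm-rsbtau}, pass to a local holomorphic branch of the monodromy data, so that $(r,s)=\varphi_{\mathbf{n}}(\tau,B)$ is an honest holomorphic map on a neighborhood $U$. Since $\Delta_1=2\cos 2\pi s$ depends only on $s$ and $\Delta_2=2\cos 2\pi r$ depends only on $r$, the chain rule immediately yields
\[
\Delta_{1,B}=-4\pi\sin(2\pi s)\,s_B,\qquad \Delta_{2,B}=-4\pi\sin(2\pi r)\,r_B,
\]
where $s_B=\partial s/\partial B$ and $r_B=\partial r/\partial B$ are taken at fixed $\tau$. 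It thus remains only to compute $s_B$ and $r_B$.

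Next I would pass to the local inverse map $(r,s)\mapsto(\tau(r,s),B(r,s))$, which is precisely the parametrization already used in Sections 5--6, where $\tau=\tau(r,s)$ is the simple zero of $Z_{r,s}^{\mathbf{n}}$ and $B=B(r,s)$ is the associated parameter. Its Jacobian is $\begin{pmatrix}\tau_r&\tau_s\\ B_r&B_s\end{pmatrix}$, and the Jacobian of $\varphi_{\mathbf{n}}$ is its matrix inverse. Reading off the entries gives
\[
s_B=\frac{\tau_r}{\tau_rB_s-\tau_sB_r},\qquad r_B=\frac{-\tau_s}{\tau_rB_s-\tau_sB_r}.
\]
Here the universal law $d\tau\wedge dB=8\pi^2\,dr\wedge ds$ of Theorem \ref{thm-rsbtau} is exactly the assertion that $\tau_rB_s-\tau_sB_r\equiv 8\pi^2$, so $s_B=\tau_r/(8\pi^2)$ and $r_B=-\tau_s/(8\pi^2)$.

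Substituting these into the chain-rule formulas then gives
\[
\Delta_{1,B}=-4\pi\sin(2\pi s)\cdot\frac{\tau_r}{8\pi^2}=-\frac{1}{2\pi}\sin(2\pi s)\,\tau_r,
\]
and likewise $\Delta_{2,B}=\frac{1}{2\pi}\sin(2\pi r)\,\tau_s$, which is the claim. I expect no genuine obstacle: the content is the chain rule combined with the already-established universal law. The only points needing care are (i) that the local inverse $\varphi_{\mathbf{n}}^{-1}$ is holomorphic, which is guaranteed by Theorem \ref{thm-rsbtau} (holomorphic and locally one-to-one) together with the nonvanishing Jacobian determinant $8\pi^2\neq 0$; and (ii) that $\Delta_1,\Delta_2$ are genuinely single-valued holomorphic functions of $(\tau,B)$ despite the $\sim$-ambiguity in $(r,s)$, which holds because $\cos$ is even and $1$-periodic, so the two trace functions are insensitive to the choice of branch.
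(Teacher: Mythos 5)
Your proposal is correct and is essentially the paper's own argument: the paper differentiates the identity $\Delta_1(B(r,s);\tau(r,s))=2\cos(2\pi s)$ with respect to $r$ and $s$ and solves the resulting $2\times 2$ linear system, using $\tau_rB_s-\tau_sB_r=8\pi^2$ from Theorem \ref{thm-rsbtau} for the determinant, which is the same chain-rule-plus-universal-law computation you perform by explicitly inverting the Jacobian of $(r,s)\mapsto(\tau,B)$. Your added remarks on holomorphy of the local inverse and the insensitivity of $\Delta_1,\Delta_2$ to the $\sim$-ambiguity are correct and harmless refinements.
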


\begin{proof} Taking derivatives with respect to $r$ and $s$ respectively to $\Delta_1(B;\tau)=2\cos (2\pi s)$, we obtain
\[\begin{pmatrix}
\tau_r & B_r\\
\tau_s & B_s
\end{pmatrix}\begin{pmatrix}
\Delta_{1,\tau} \\
\Delta_{1,B}
\end{pmatrix}=\begin{pmatrix}
0 \\
-4\pi \sin(2\pi  s)
\end{pmatrix},\]
where $\Delta_{1,\tau}={\partial \Delta_1}/{\partial \tau}$.
From here and $\tau_r B_s-\tau_s B_r=8\pi^2$ we easily obtain (\ref{ex-eq-1}). The proof of (\ref{ex-eq-2}) is similar.
\end{proof}

We will see that Lemma \ref{lem-7.1} has interesting applications to the 
algebraic multiplicity of (anti)-periodic eigenvalues for the Hill operator with the DTV potential
\begin{equation}\label{bacq}L_{\mathbf{n}}:=\frac{d^2}{d x^2}-I_{\mathbf{n}}(x;\tau),\quad x\in\mathbb{R}.\end{equation}
Let $B_0$ be any zero of the spectral polynomial $Q_{\mathbf{n}}(B;\tau)$. It follows that
\[\Delta_1(B_0;\tau)=\pm 2,\]
so $B_0$ is a (anti)-periodic eigenvalue of \eqref{bacq} 
with respect to $x\to x+1$.
Denote
\[d(B_0):=\text{ord}_{B_0}(\Delta_1(\cdot;\tau)^2-4)\]
to be the order of $B_0$ as a zero of $\Delta_1(\cdot;\tau)^2-4$.
It is well known (cf. \cite{GW}) that $d(B_0)$ equals to the \emph{algebraic multiplicity} of $B_0$ as a (anti)periodic eigenvalue of  \eqref{bacq}.
For generic $\tau$'s the algebraic multiplicity $d(B_0)=1$. However, for special $\tau$'s the algebraic multiplicity is no longer $1$ and how to compute it remains a long-standing open problem. Here we provide an algorithm of computing the algebraic multiplicity.

\begin{theorem}\label{thm-algebraicM} Recalling the rational function $R_0(B;\tau)\in \mathbb{Q}[\eta_1,e_1,e_2,e_3](B)$ in Theorem \ref{thm-taurs}, there holds
\begin{equation}\label{eq-ex-am}
d(B_0)=2\text{ord}_{B_0}R_0(\cdot;\tau)+2-\text{ord}_{B_0}Q_{\mathbf{n}}(\cdot;\tau),
\end{equation}
namely the algebraic multiplicity $d(B_0)$ can be computed by counting $\text{ord}_{B_0}R_0(\cdot;\tau)$ and $\text{ord}_{B_0}Q_{\mathbf{n}}(\cdot;\tau)$.
\end{theorem}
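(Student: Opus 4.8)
The plan is to convert the statement into a single order-counting identity in the variable $B$ (with $\tau$ fixed), built from the two relations already available: equation \eqref{ex-eq-1} of Lemma \ref{lem-7.1}, which reads $\Delta_{1,B}=-\tfrac{1}{2\pi}\sin(2\pi s)\,\tau_r$, and Theorem \ref{thm-taurs}, which gives $\tau_r=\tfrac{\pi i}{C}R_0(B;\tau)$ with $C^2=Q_{\mathbf n}(B;\tau)$. Substituting the latter into the former produces
\[
\Delta_{1,B}=-\frac{i}{2}\,\frac{\sin(2\pi s)}{C}\,R_0(B;\tau),
\]
which will be the engine of the whole argument. Throughout, $\mathrm{ord}_{B_0}$ means the order in $B$ at fixed $\tau$, and I will allow it to be negative (since $R_0$ is rational and may have a pole or zero at $B_0$).

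First I would record two elementary reductions. Because $B_0$ is a zero of $Q_{\mathbf n}(\cdot;\tau)$, the trace satisfies $\Delta_1(B_0;\tau)=\pm2$, so $\Delta_1$ is holomorphic and nonvanishing at $B_0$; in particular $\Delta_1$ is a unit there and $d(B_0)=\mathrm{ord}_{B_0}(\Delta_1^2-4)\geq1$. Differentiating and using that $\Delta_1$ is a unit gives
\[
\mathrm{ord}_{B_0}\Delta_{1,B}=\mathrm{ord}_{B_0}\bigl(2\Delta_1\Delta_{1,B}\bigr)
=\mathrm{ord}_{B_0}\frac{d}{dB}\bigl(\Delta_1^2-4\bigr)=d(B_0)-1 .
\]
This expresses the target quantity $d(B_0)$ in terms of $\mathrm{ord}_{B_0}\Delta_{1,B}$.

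The key algebraic maneuver is to square the boxed formula for $\Delta_{1,B}$. Using $\Delta_1=2\cos(2\pi s)$ one has $\sin^2(2\pi s)=\tfrac14(4-\Delta_1^2)$, and combined with $C^2=Q_{\mathbf n}$ this yields the clean identity
\[
\Delta_{1,B}^2=\frac{(\Delta_1^2-4)\,R_0(B;\tau)^2}{16\,Q_{\mathbf n}(B;\tau)} ,
\]
in which every factor is a single-valued meromorphic function of $B$ near $B_0$. Squaring is essential here: it removes at once the multivaluedness of $\sin(2\pi s)$ and of $C$ across the branch point $B_0$ (recall $(r,s)\leftrightarrow C$ and $(-r,-s)\leftrightarrow -C$), so that ordinary orders in $B$ are well defined on both sides. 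Taking $\mathrm{ord}_{B_0}$ then gives $2\bigl(d(B_0)-1\bigr)$ on the left and $d(B_0)+2\,\mathrm{ord}_{B_0}R_0-\mathrm{ord}_{B_0}Q_{\mathbf n}$ on the right; solving this linear relation for $d(B_0)$ produces exactly \eqref{eq-ex-am}.

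I expect the only genuinely delicate point to be the bookkeeping at the branch point $B_0\notin\Sigma_{\mathbf n}$: one must justify that although $\sin(2\pi s)$, $\tau_r$, and $C$ are individually multivalued (or undefined) at $B_0$, the combination $\Delta_{1,B}$ extends holomorphically, and that the squared identity holds as an identity of meromorphic functions of $B$ on a punctured neighborhood of $B_0$ and hence after taking orders. The squaring step resolves this cleanly, and the signed-order arithmetic absorbs any zero or pole of $R_0$ at $B_0$ without further case analysis, so no movable obstruction remains once the squared identity is in place.
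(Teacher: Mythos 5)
Your proposal is correct and is essentially the paper's own argument: the paper likewise substitutes $\tau_r=\pi i\,R_0(B)/\sqrt{Q_{\mathbf n}(B)}$ and $\sin(2\pi s)=\tfrac12\sqrt{4-\Delta_1(B)^2}$ into \eqref{ex-eq-1} to get $\Delta_{1,B}/\sqrt{4-\Delta_1^2}=\tfrac{-i}{4}R_0/\sqrt{Q_{\mathbf n}}$, and then counts orders via $4-\Delta_1^2\sim(B-B_0)^{d(B_0)}$ and $\Delta_{1,B}\sim(B-B_0)^{d(B_0)-1}$, which is exactly your relation $\mathrm{ord}_{B_0}\Delta_{1,B}=d(B_0)-1$. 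Your only deviation is squaring the identity so that all quantities become single-valued meromorphic functions of $B$ near the branch point before taking orders, which is a slightly more careful handling of the same bookkeeping that the paper compresses into the $\sim$ notation.
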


\begin{proof} In the following argument we omit the notation $\tau$ since it is fixed. Clearly
\[\sin(2\pi s)=\frac12\sqrt{4-\Delta_1(B)^2},\]
and recall Theorem \ref{thm-taurs} that
\[\tau_r=\pi i\frac{R_0(B)}{C}=\pi i \frac{R_0(B)}{\sqrt{Q_{\mathbf{n}}(B)}}.\]
Inserting these into (\ref{ex-eq-1}) leads to
\[\frac{\Delta_{1,B}(B)}{\sqrt{4-\Delta_1(B)^2}}=
\frac{-i}{4}\frac{R_0(B)}{\sqrt{Q_{\mathbf{n}}(B)}}.\]
From here and
\[4-\Delta_1(B)^2\sim (B-B_0)^{d(B_0)},\quad \Delta_{1,B}(B)\sim (B-B_0)^{d(B_0)-1},\]
we easily obtain (\ref{eq-ex-am}).
\end{proof}

\begin{example} The first Lam\'{e} case $\mathbf{n}=(1,0,0,0)$ is simple.
Let us consider the second Lam\'{e} case $\mathbf{n}=(2,0,0,0)$, where we have computed in \eqref{fc-38} that
\[R_0(B;\tau)=-2(B^2+3\eta_1B-\tfrac{3}{2}g_2).\]
Recall \eqref{eq-spec-n=2} that
\begin{align}\label{eq-spec-n=22}
Q_2(B;\tau)&=(B^2-3g_2)(B^3-\tfrac{9}{4}g_2B+\tfrac{27}{4}g_3)\\
&=(B^2-3g_2)\prod_{k=1}^3(B+3e_k).\nonumber
\end{align}
It is easy to prove that
\[\{-3e_{1},-3e_{2},-3e_{3}\} \cap \{(3g_{2})^{1/2},-(3g_{2})^{1/2}\}=\emptyset,
\]
so $-3e_k$'s are always simple zeros of $Q_2(B;\tau)$.

Let $B_0$ be any zero of $Q_2(B;\tau)$. 
There are two cases.

{\bf Case 1.} $B_0=\pm (3g_2)^{1/2}$. It is well known that $g_2(\tau)=0$ if and only if
\[\tau\in\mathfrak{S}:=\left \{  \frac{ae^{\pi i/3}+b}{ce^{\pi i/3}+d}\left \vert
\begin{pmatrix}
a & b\\
c & d
\end{pmatrix}
\in SL(2,\mathbb{Z})\right.  \right \}.
\]
First we consider $\tau\in\mathfrak{S}$, i.e. $g_2(\tau)=0$. Then $B_0=0$ and $\text{ord}_{0}Q_{2}(\cdot;\tau)=2$. Furthermore,
\[R_0(B;\tau)=-2B(B+3\eta_1(\tau)).\]
Since $\eta_1(e^{\pi i/3})=\frac{2\pi}{\sqrt{3}}$ and
\[\eta_{1}\left(\frac{a\tau+b}{c\tau+d}\right)=(c\tau+d)^{2}\eta_1(\tau)-2\pi i c(c\tau+d),\; \begin{pmatrix}
a & b\\
c & d
\end{pmatrix}
\in SL(2,\mathbb{Z})\]
imply that $\eta_1(\tau)\neq 0$ for $\tau\in \mathfrak{S}$, we have $\text{ord}_{0}R_0(\cdot;\tau)=1$. Therefore, we see from (\ref{eq-ex-am}) that
\[d(B_0)=d(0)=2\quad \text{for }\; \tau\in \mathfrak{S}.\]
Next we consider $\tau\notin\mathfrak{S}$, i.e. $g_2(\tau)\neq0$. Then $B_0\neq 0$ and so
$\text{ord}_{B_0}Q_{2}(\cdot;\tau)=1$. Clearly
\[\tfrac{-1}{2}R_0(B;\tau)=(B-B_0)(B+B_0+3\eta_1(\tau))+\tfrac{1}{2}B_0(B_0+6\eta_1(\tau)),\]
so we easily obtain
\[\text{ord}_{B_0}R_0(\cdot;\tau)=\begin{cases} 0\quad \text{if }\;B_0+6\eta_1(\tau)\neq 0,\\
1\quad \text{if }\;B_0+6\eta_1(\tau)= 0.\end{cases}\]
From here and (\ref{eq-ex-am}) it follows that
\[d(B_0)=\begin{cases} 1\quad \text{if }\;B_0+6\eta_1(\tau)\neq 0,\\
3\quad \text{if }\;B_0+6\eta_1(\tau)= 0.\end{cases}\]
In conclusion,
\[d(\pm (3g_2)^{1/2})=\begin{cases} 1 &\text{if }\;\tau\notin\mathfrak{S}, \pm (3g_2)^{1/2}+6\eta_1\neq 0,\\
d(0)=2 &\text{if }\;\tau\in\mathfrak{S},\\
3 &\text{if }\;\pm (3g_2)^{1/2}+6\eta_1= 0.\end{cases}\]
Recently, we proved in \cite{CL-E2} that there are infintely many $\tau$'s such that $12\eta_1(\tau)^2-g_2(\tau)=0$, so for such $\tau$'s, either $d((3g_2)^{1/2})=3$ or $d(-(3g_2)^{1/2})=3$.

{\bf Case 2.} $B_0=-3e_k(\tau)$. Then $\text{ord}_{-3e_k}Q_{2}(\cdot;\tau)=1$ and
\[\tfrac{-1}{2}R_0(B;\tau)=(B+3e_k)(B-3e_k+3\eta_1)-9\pi i e_k'(\tau),\]
where we used
\[
e_{k}^{\prime}(\tau)=\frac{i}{\pi}\Big[  \frac{1}{6}g_{2}(\tau)+\eta
_{1}(\tau)e_{k}(\tau)-e_{k}(\tau)^{2}\Big].
\]
So if $e_k'(\tau)\neq 0$, we have $\text{ord}_{-3e_k}R_0(\cdot;\tau)=0$ and so $d(-3e_k)=1$. If $e_k'(\tau)=0$ and $2e_k-\eta_1\neq 0$, we have  $\text{ord}_{-3e_k}R_0(\cdot;\tau)=1$ and so $d(-3e_k)=3$. If $e_k'(\tau)=0$ and $2e_k-\eta_1= 0$, or equivalently
\begin{equation}\label{eq-ex-ek}
3\eta_1^2+2g_2=0,\quad 6e_k^2+g_2=0,
\end{equation}
then $\text{ord}_{-3e_k}R_0(\cdot;\tau)=2$ and so $d(-3e_k)=5$. In conclusion,
\[d(-3e_k)=\begin{cases} 1 &\text{if }\;e_k'(\tau)\neq 0,\\
3 &\text{if }\;e_k'(\tau)=0, 2e_k-\eta_1\neq 0\\
5 &\text{if }\;e_k'(\tau)=0, 2e_k-\eta_1= 0.\end{cases}\]
Recently, we proved in \cite{CL=CMP} that there are infintely many $\tau$'s such that $e_k'(\tau)=0$, so for such $\tau$'s, $d(-3e_k)\in\{3,5\}$.
Whether there exist $\tau$ satisfying (\ref{eq-ex-ek}) remains as an interesting open problem.
\end{example}

\subsection*{Acknowledgements} Z. Chen was supported by NSFC (No. 12222109, 12071240).

\end{document}